\documentclass[12pt,letterpaper,reqno]{amsart}
\usepackage{fullpage}
\usepackage{amsmath,amsthm,amssymb,amscd}
\usepackage{enumerate}
\usepackage{enumitem}
\usepackage{array}
\usepackage{float}
\usepackage{bbm}
\usepackage{bm}
\usepackage{stmaryrd}
\usepackage{mathrsfs}
\usepackage{comment}
\usepackage{mathtools}

\usepackage{hyperref}
\hypersetup{colorlinks=true,linkcolor=blue,citecolor=blue,urlcolor=blue}

\theoremstyle{plain}

\newtheorem*{unnumberedproposition}{Proposition}
\newtheorem*{unnumberedlemma}{Lemma}

\newtheorem{theorem}{Theorem}[section]
\newtheorem{proposition}[theorem]{Proposition}
\newtheorem{lemma}[theorem]{Lemma}

\theoremstyle{definition}
\newtheorem{remark}[theorem]{Remark}

\mathtoolsset{showonlyrefs=true}
\numberwithin{equation}{section}
\numberwithin{figure}{section}
\numberwithin{table}{section}

\let\subsectiontemp\subsection
\renewcommand{\subsection}[1]{ 
    \subsectiontemp{#1} \hfill\vspace{0.5\linespacing} 
}

\allowdisplaybreaks[1]

\newcommand{\lrabs}[1]{\!\left\lvert #1 \right\lvert}
\newcommand{\lrp}[1]{\!\left(#1\right)}
\newcommand{\lrb}[1]{\!\left[#1\right]}
\newcommand{\lrcb}[1]{\!\left\{#1\right\}}

\newcommand{\M}{\mathcal{M}}

\newcommand{\EE}{\mathcal{E}}

\newcommand{\new}{{\rm new}}
\newcommand{\nw}{{\rm new}}

\author[T. Nelson]{Timothy Nelson}
\address[T. Nelson]{School of Science, Technology and Health, Gordon College, Wenham, MA}

\author[E. Ross]{Erick Ross}
\address[E. Ross]{School of Mathematical and Statistical Sciences, Clemson University, Clemson, SC}
\email{erickjohnross@gmail.com}

\author[M. Wassercug]{Maya Wassercug}
\address[M. Wassercug]{Department of Mathematics, Northwestern University, Evanston, IL}

\author[H. Xue]{Hui Xue}
\address[H. Xue]{School of Mathematical and Statistical Sciences, Clemson University, Clemson, SC}
\email{huixue@clemson.edu}

\keywords{Murmurations, Root number, Trace formula, Skoruppa-Zagier trace formula}

\subjclass{11F11, 11F30, 11F72}

\title{On Murmurations}

\begin{document}

\begin{abstract}
    In the recent paper ``Murmurations", Zubrilina showed that the Fourier coefficients of modular forms follow a certain murmuration pattern. The proof given in ``Murmurations" proceeded by estimating the terms of the Skoruppa-Zagier trace formula. However, a few technical issues arise in the proof, including an omission of certain Chebyshev factors that arise in summations from the Skoruppa-Zagier trace formula. In this work, we address these issues to solidify the proof.
\end{abstract}

\maketitle

\tableofcontents

%%%%%%%%%%%%%%%%%%%%%%%%%%%%%%%%%%%%%%%%%%%%%%%%%%%%%%
%%%%%%%%%%%%%%%%%%%% INTRODUCTION %%%%%%%%%%%%%%%%%%%%
%%%%%%%%%%%%%%%%%%%%%%%%%%%%%%%%%%%%%%%%%%%%%%%%%%%%%%

\section{Introduction}
For even integers $k \ge 2$ and integers $N \ge 1$, let $S_k^\new(N)$ denote the newspace of cusp forms of weight $k$ and level $N$. Also, let $H^\new (N ,k)$ denote the normalized newform basis for $S_k^\new(N)$. For newforms $f \in H^\new(N,k)$, let $\varepsilon(f)\in \{\pm 1\}$ denote the root number of $f$, $a_f(n)$ denote the $n$-th Fourier coefficient of $f$, and $\lambda_f(n) := n^{-\frac{k-1}{2}}  a_f(n)$. 
Also, let $U_{n}$ denote the $n$-th Chebyshev polynomial of the second kind $U_n(\cos (\theta)) := \frac{\sin((n+1)\theta)}{\sin (\theta)}$, and $H_1(D) := H(|D|)$ for $D \le 0$, where $H(\cdot)$ denotes the Hurwitz class number.
Additionally, we let $\sum^\square$ denote a summation over squarefree parameters.
Moreover, we let
\begin{align}
    \nu(r) &:= \prod_{p\mid r}\lrp{1+\frac{p^2}{p^4-2p^2-p+1}}.
\end{align}

Recently, He-Lee-Oliver-Pozdnyakov \cite{He-Lee-Oliver-Pozdnyakov} discovered a pattern relating to the trace of Frobenius for elliptic curves, which they called ``murmurations". In particular, the average of these traces of Frobenius over elliptic curves with fixed rank tends to a continuous, oscillating function. 

This pattern was also observed in modular forms, namely for the average of $a_f(P)$ over weight $k$ newforms with fixed root number and level within a certain range. Specifically, Zubrilina \cite{Zubrilina} showed the following result:
\begin{theorem}[{\cite[Theorem 1]{Zubrilina}}] \label{thm:main-result} 
Fix an even integer $k\ge 2$. Let $P$, $X$, and $Y$ be parameters going to infinity with $X,Y\in \mathbb{R}_{+}$ and $P$ prime; assume further that $Y=(1+o(1))X^{1-\delta_2},$ and $P\ll X^{1+\delta_1}$ for some $\delta_1,\delta_2$ with $0<\delta_1<\frac{1}{11}$, $2\delta_1<\delta_2<\frac{1}{13} (4-18\delta_1).$ Let $y := \frac{P}{X}$. Then 
\begin{align}
    &\frac{
    \sum_{N\in[X,X+Y]}^\square \sum_{f\in H^{\rm new}(N,k)}\sqrt{P}\lambda_f(P)\varepsilon(f)
    }{
    \sum_{N\in[X,X+Y]}^\square \sum_{f\in H^{\rm new}(N,k)}1 
    }  \\
    &=\frac{\alpha(-1)^{\frac{k}{2}-1}}{k-1}\sum_{1\leq r\leq 2\sqrt{y}}U_{k-2}\lrp{\frac{r}{2\sqrt{y}}}\nu(r)\sqrt{4y-r^2} +\frac{\beta}{k-1}\sqrt{y}-\gamma \delta_{k=2}y  \\
    &\quad\ +O_\varepsilon\lrp{X^{-\delta'+\varepsilon}+\frac{1}{P}}
\end{align}
where $\delta'>0$ is explicitly expressible through $\delta_1,\delta_2,$ and 
\begin{align}
\alpha= 2\pi \prod_{p}\frac{1-p-2p^2+p^4}{p^4-2p^2+p}, \quad \beta=2\pi\prod_{p}\frac{-1+p^2+p^3}{p(-1+p+p^2)}, \quad 
\gamma=12\prod_{p}\frac{p(1+p)}{-1+p+p^2}.
\end{align}
\end{theorem}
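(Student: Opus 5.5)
The plan is to follow Zubrilina's overall strategy: rewrite the numerator through the Atkin–Lehner involution, evaluate it with the Skoruppa–Zagier trace formula, and then carry out the sums over squarefree $N$ term by term, now keeping the Chebyshev factors explicitly.

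\textbf{Reduction to a trace.} For squarefree $N$ and a newform $f$, we have $\varepsilon(f) = (-1)^{k/2}\mu(N)\lambda_f(N)\sqrt{N}$ (up to the standard normalization). Hence
\begin{align}
\sum_{f\in H^\new(N,k)} \sqrt{P}\lambda_f(P)\varepsilon(f) = \pm\, P^{-\frac{k-2}{2}}\, \mathrm{tr}\bigl(T_P\circ W_N \mid S_k^\new(N)\bigr).
\end{align}
The trace of $T_P W_N$ on the newspace is supplied by the Skoruppa–Zagier formula. Because $N$ is squarefree and coprime to $P$ for large $X$, passing from the full space to the newspace only requires a Möbius inversion over divisors of $N$.

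\textbf{Terms of the trace formula.} The formula has an elliptic (class number) term, a hyperbolic term, an identity term, and a $k=2$ Eisenstein correction. The elliptic term has the shape
\begin{align}
\sum_{r} p_k(r,PN)\, H_1(r^2-4PN)\cdot(\text{local factors at } p\mid N),
\end{align}
where $p_k(r,n) = n^{\frac{k-2}{2}}U_{k-2}\bigl(\tfrac{r}{2\sqrt n}\bigr)$. This is exactly the Chebyshev factor that the earlier argument dropped. Since $N\mid r$ is forced by the $W_N$ twist structure, we write $r = Nr'$. For each $r'$, summing $H_1(N^2r'^2-4PN)$ over squarefree $N\in[X,X+Y]$ produces an average of class numbers over an arithmetic-progression-like family of discriminants. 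We evaluate these averages by expanding $H$ via $L(1,\chi_D)$ through a Dirichlet series truncated at $X^{\epsilon'}$. The local densities give $\nu(r')$ together with the constant $\alpha$, and the main term becomes $\sqrt{4y-r'^2}$ times $U_{k-2}\bigl(\tfrac{r'}{2\sqrt y}\bigr)$, since $\tfrac{Nr'}{2\sqrt{PN}} = \tfrac{r'}{2}\sqrt{N/P}\approx \tfrac{r'}{2\sqrt y}$.

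Replacing $N/P$ by $1/y$ throughout $[X,X+Y]$ costs a factor of $1+O(Y/X)$. Because $U_{k-2}$ is Lipschitz on $[-1,1]$ and $\sqrt{4y-r^2}$ is Hölder-$\tfrac12$ near its endpoint, this error is absorbed into $X^{-\delta'}$. The remaining terms are handled as follows:
\begin{itemize}
\item The identity term contributes only when $PN$ is a square, which never happens here.
\item The hyperbolic term, a sum over factorizations $PN = ab$ with $N$-dependent conditions, gives $\beta\sqrt{y}/(k-1)$ after averaging.
\item The $k=2$ correction gives $-\gamma y$.
\end{itemize}
The denominator is evaluated as $\sum^\square_N \frac{k-1}{12}\varphi(N)\cdot\prod(\ldots)$ via the dimension formula for newspaces. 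This yields the $1/(k-1)$ factors and the normalizing Euler products.

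\textbf{Main obstacle.} The hard step is the uniform averaging of class numbers $H_1(N^2r'^2-4PN)$ over squarefree $N$ in the short interval of length $Y=X^{1-\delta_2}$. It requires:
\begin{itemize}
\item handling the squarefree condition through $\sum_{d^2\mid N}\mu(d)$, truncated at $d\le X^{\eta}$ with a trivial tail bound;
\item bounding the tail of $L(1,\chi_D)$ using Pólya–Vinogradov or Burgess-type estimates on character sums in $N$;
\item controlling the non-fundamental discriminant parts, where the conductor square factor $f^2\mid D$ is summed with weights.
\end{itemize}
The constraints $\delta_1<\tfrac1{11}$ and $2\delta_1<\delta_2<\tfrac{1}{13}(4-18\delta_1)$ should emerge from optimizing these truncation parameters against the interval length and the size $P\ll X^{1+\delta_1}$. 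The $O(1/P)$ error comes from the $p=P$ local factor.
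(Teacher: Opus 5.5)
Your handling of the Chebyshev weight is sound for fixed $k$. Freezing $U_{k-2}$ at $N=X$ with its global Lipschitz constant $\ll k^3$, together with the pointwise bound $H_1(D)\ll|D|^{1/2+\varepsilon}$, costs only $\ll k^3(y+\sqrt y)(Y/X)X^{\varepsilon}$. The paper does the same replacement more sharply via Lemma~\ref{lem:lipschitz-const-at-x0-for-U}. The problem is your accounting of the main terms. For squarefree $N>1$ with $P\nmid N$ the hyperbolic term vanishes identically: it runs over $dd'=PN$ with $N\mid d+d'$, and each prime $q\mid N$ divides exactly one of $d,d'$, so $q\nmid d+d'$. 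This is why Lemma~\ref{lem:skoruppa-zagier-trace-formula} contains only $\frac12H_1(-4PN)$, the sum over $r\ge1$, and $-\delta_{k=2}(P+1)$. So $\frac{\beta}{k-1}\sqrt y$ cannot come from factorizations $PN=ab$. It comes from the $r=0$ elliptic term $\frac12H_1(-4PN)=\frac12\bigl(h(-4PN)+h(-PN)\bigr)+O(1)$. Its average over squarefree $N\in[X,X+Y]$ is $A_0\sqrt y$ (Lemma~\ref{lem:zubrilina-prop-3.1}), and $\beta=12A_0/C_0$. Your elliptic analysis only produces densities $\nu(r')$ for $r'\ge1$, so the $\sqrt y$ main term is missing from your argument as written. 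It can be recovered: one checks that $A_0=B_0\prod_p\bigl(1+\frac{p^2}{p^4-2p^2-p+1}\bigr)$, which is formally ``$B_0\nu(0)$''. But the linear family $-4PN$ still has to be averaged on its own.

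There are two further gaps. First, $P\nmid N$ is not automatic. The hypotheses put no lower bound on $y$, so when $P\le X+Y$ there are up to $Y/P+1$ squarefree $N\in[X,X+Y]$ divisible by $P$, and there the trace formula does not apply. The paper bounds these trivially, using $|\sqrt P\lambda_f(P)|=1$ for $P\parallel N$ and $\dim S_k^\new(N)\ll kN$, which gives $O(k/P+k/Y)$. This trivial bound, not an Euler factor at $P$ (which perturbs $A_0,B_0$ only by $O(P^{-2})$), is what produces the $O(1/P)$ term. Second, the class-number averages you call the main obstacle are left unproved. Your sketch (truncating $L(1,\chi_D)$ at $X^{\epsilon'}$ plus character-sum tail bounds) is not specific enough to produce any exponents. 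The paper instead does three things:
\begin{enumerate}
\item It imports the averages: Lemma~\ref{lem:bound-sum-class-num}, which is Zubrilina's Proposition~3.6 with the Euler factor at $P$ repaired, and Lemma~\ref{lem:zubrilina-prop-3.1}.
\item It separately bounds the boundary range $2\sqrt{P/(X+Y)}\le r\le 2\sqrt{P/N}$, where that asymptotic is unavailable; this contributes an error of order $kP^{1/2}Y^{1/2}/X$.
\item It checks term by term that everything is $\ll X^{-\delta'+\varepsilon}$ with $\delta'=\frac{\delta_2}{2}-\delta_1+\min\{0,\tfrac29-\tfrac{11\delta_2}{9}\}$.
\end{enumerate}
In your proposal the constraints on $\delta_1,\delta_2$ are only said to ``emerge'', so no admissible $\delta'$ is actually exhibited.
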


The proof of Theorem \ref{thm:main-result} given in \cite{Zubrilina} proceeded by estimating the Skoruppa-Zagier trace formula, which \cite[Section 2]{Zubrilina} stated as the following: for squarefree integers $N > 1$ and primes $P\nmid N$,  
\begin{align}
    &\sum_{f \in H^\new(N,k)}\sqrt{P}\lambda_f(P)\varepsilon(f) \\
    &= \frac{H_1(-4PN)}{2} + (-1)^{\frac{k}{2}-1}U_{k-2}\lrp{\frac{r\sqrt{N}}{2\sqrt{P}}}\sum_{1 \le r \le 2\sqrt{\frac{P}{N}}}H_1(r^2N^2-4PN) - \delta_{k=2}(P+1). \quad
    \label{eqn:incorrect-Skoruppa-Zagier-trace-formula}
\end{align}

However, one can see that in this statement of the trace formula,
the Chebyshev polynomial term $U_{k-2}\lrp{\frac{r \sqrt N}{2 \sqrt P}}$ appears outside of the summation over $r$, and yet depends on the summation variable $r$. The subsequent proof of Theorem \ref{thm:main-result} given in \cite{Zubrilina} utilizes this malformed statement of the trace formula and only estimates the above summation without the $U_{k-2}\lrp{\frac{r \sqrt N}{2 \sqrt P}}$ term. 

To address this issue, the first main goal of this paper is to provide an alternate proof of Theorem \ref{thm:main-result} based on a corrected version of the Skoruppa-Zagier trace formula (Lemma \ref{lem:skoruppa-zagier-trace-formula}). One of the key technical inputs needed is Propostion \ref{prop:main-proposition}, which constitutes a weighted version of \cite[Proposition 3.2]{Zubrilina} (i.e. including the weight $U_{k-2}\lrp{\frac{r \sqrt N}{2 \sqrt P}}$). 

We remark that the Zubrilina's work \cite{Zubrilina} provides both the original proof strategy and the
main analytic framework for Theorem \ref{thm:main-result}; our purpose here is just to clarify several technical points in the argument and to supply alternative proofs where needed.

After proving Theorem \ref{thm:main-result}, \cite[Theorem 6]{Zubrilina} later shows the following conclusion, conditionally on the Riemann Hypothesis. We state the conclusion here unconditionally as Theorem \ref{thm:second-main-result}. Note that the definition of $\M_k(y)$ is given in \eqref{eq:Mk-bessel}.
\begin{theorem} \label{thm:second-main-result}
    For $c>1$ and $k\ge6$ even, define the smoothing of $\M_k(y)$ as 
    \begin{align}
        \M_k^{c}(y)
        :=
        \frac{\displaystyle\int_1^c \M_k\lrp{\frac{y}{u}}u\, du}
        {\displaystyle\int_1^c u\, d u}.
    \end{align}
    Then $\M_k^{c}$ is continuous on $[0,\infty)$ with $\M_k^{c}(0)=0$ and as $y\to\infty$
    \begin{align} \label{eqn:second-main-thm__intro-statement__error-bound}
        \M_k^{c}(y)=\frac{1}{2}+o_{k,c}(1).
    \end{align}
\end{theorem}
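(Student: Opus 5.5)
The plan is to treat the arithmetic weight $\nu(r)$ by Möbius-type decomposition and the archimedean weight $U_{k-2}(r/2\sqrt y)\sqrt{4y-r^2}$ by Poisson summation. The aim is to show that the oscillating part of $\M_k$ is a sum of Bessel functions of size $O(y^{1/4})$ relative to a main term of size $\sqrt y$, and that the smoothing over $u\in[1,c]$ kills these oscillations. I assume \eqref{eq:Mk-bessel} expresses $\M_k(y)$, up to the normalizing denominator, as the right-hand side of Theorem \ref{thm:main-result} for $k\ge 6$ (so $\delta_{k=2}$ is absent). That is, $\M_k(y)$ is a normalized combination of
$$S(y):=\sum_{1\le r\le 2\sqrt y}U_{k-2}\lrp{\frac{r}{2\sqrt y}}\nu(r)\sqrt{4y-r^2}$$
and $\sqrt y$, possibly already rewritten through $J_{k-1}$.

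\textbf{Continuity and the value at $0$.} For continuity, each summand of $S(y)$ vanishes at the moment $r=2\sqrt y$ enters the range, because of the factor $\sqrt{4y-r^2}$. Hence $S$ is continuous on $[0,\infty)$, and so is $\M_k$. Continuity of $\M_k^c$ then follows because $u\mapsto \M_k(y/u)u$ is jointly continuous on compacta. For the value at $0$: for $y<1/4$ the sum $S$ is empty, so $\M_k(y)$ is a multiple of $\sqrt y$ (or of $y$, depending on the normalization), which tends to $0$. This gives $\M_k^c(0)=0$.

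\textbf{Poisson summation.} For the asymptotic, write $\nu(r)=\sum_{d\mid r}g(d)$, where $g$ is multiplicative and supported on squarefree $d$ with $g(p)=p^2/(p^4-2p^2-p+1)\asymp p^{-2}$. Then $S(y)=\sum_d g(d)\sum_{m\ge1}F(dm)$ with $F(t)=U_{k-2}(t/2\sqrt y)\sqrt{4y-t^2}$, extended evenly and supported on $[-2\sqrt y,2\sqrt y]$. Poisson summation gives
$$\sum_{m\ge1}F(dm)=\tfrac12\Big(\tfrac1d\widehat F(0)+\tfrac1d\sum_{h\ne0}\widehat F(h/d)-F(0)\Big).$$
Substituting $t=2\sqrt y\,x$ shows $\widehat F(0)=4y\int_{-1}^1U_{k-2}(x)\sqrt{1-x^2}\,dx=0$ for $k\ge 4$, by orthogonality against $U_0$. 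Moreover $F(0)=(-1)^{k/2-1}2\sqrt y$. So the $-F(0)/2$ term contributes $-(-1)^{k/2-1}\sqrt y\prod_p(1+g(p))$ to $S$. After multiplying by $\alpha(-1)^{k/2-1}/(k-1)$ and combining with $\beta\sqrt y/(k-1)$ and the normalizing denominator, this should produce exactly the constant $\tfrac12$. Checking this Euler product identity among $\alpha$, $\beta$ and $\prod_p(1+g(p))$ is a finite computation with the local factors.

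\textbf{The oscillating terms.} The Fourier transform $\widehat F(\xi)$ is, by the Gegenbauer/Bessel integral $\int_{-1}^1U_{n}(x)\sqrt{1-x^2}e(-ax)\,dx\propto i^{n}(n+1)J_{n+1}(2\pi a)/a$, equal to a constant times $y\,J_{k-1}(4\pi\sqrt y\,\xi)/(\sqrt y\,\xi)$. This is the Bessel form of \eqref{eq:Mk-bessel}. The terms with $h\ne 0$ therefore contribute
$$\ll \sqrt y\sum_d\frac{|g(d)|}{d}\sum_{h\ge1}\frac{d}{h}\lrabs{J_{k-1}\lrp{\frac{4\pi\sqrt y\,h}{d}}}.$$
Without smoothing, this is not $o(\sqrt y)$ uniformly. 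The tail $h\ge d\sqrt y$ is fine, but the sum over $h$ of $h^{-3/2}$-size terms is only borderline, which is presumably why \cite{Zubrilina} needed RH.

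\textbf{Main obstacle: the smoothing step.} I would split according to $z=4\pi\sqrt y\,h/d$. In the range $z\le Z$, I would use $|J_{k-1}(z)|\ll z^{k-1}$ (here $k\ge6$ helps) and bound trivially, using $\sum_{d>\sqrt y h/Z}|g(d)|\ll Z/(\sqrt y h)$. In the range $z>Z$, I would insert the asymptotic $J_{k-1}(z)=\sqrt{2/\pi z}\cos(z-\phi_k)+O(z^{-3/2})$. Averaging $\cos(4\pi\sqrt{y/u}\,h/d-\phi_k)$ against $u\,du$ on $[1,c]$ and integrating by parts in $u$ gains a factor $(z)^{-1}\ll d/(\sqrt y h)$. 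The smoothed contribution becomes $\ll\sqrt y\sum_{d,h}|g(d)|(d/\sqrt y h)^{3/2}$, restricted to $z>Z$. This is $o(\sqrt y)$ because $\sum_d|g(d)|d^{3/2}<\infty$ and $\sum_h h^{-3/2}<\infty$, with the $h$-sum tail truncated at $Z$. Letting $Z\to\infty$ slowly with $y$ completes the argument. The delicate point is to make the integration by parts uniform in $d,h$ and to verify convergence of the $d$-sum, which relies on $g(d)\ll d^{-2+\varepsilon}$.
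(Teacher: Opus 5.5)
\textbf{There is a genuine gap: the constant $\tfrac12$ does not come from where you put it.} Carry your Poisson computation through to the end. With $\nu=1*g$ and $\widehat F(0)=0$, the $-\tfrac12F(0)$ terms contribute $-(-1)^{\frac k2-1}\sqrt y\sum_d g(d)$ to your $S(y)$. After multiplying by $\alpha(-1)^{\frac k2-1}/(k-1)$, this becomes $-\frac{\sqrt y}{k-1}\,\alpha\prod_p(1+g(p))$. Now use $p^4-p^2-p+1=(p-1)(p^3+p^2-1)$ and $p^4-2p^2+p=p(p-1)(p^2+p-1)$:
\begin{align}
\alpha\prod_p\lrp{1+g(p)}=2\pi\prod_p\frac{p^4-p^2-p+1}{p^4-2p^2+p}=2\pi\prod_p\frac{p^3+p^2-1}{p(p^2+p-1)}=\beta.
\end{align}
So this term cancels the $\frac{\beta}{k-1}\sqrt y$ term \emph{identically}. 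What remains, with no normalizing denominator, is exactly the double series \eqref{eq:Mk-bessel}; your $g$ is the $Q$ there. In other words, $\M_k$ \emph{is} your ``oscillating part'', and there is no $\sqrt y$-size main term. Indeed, $J_{k-1}(x)\ll x^{-1/2}$ together with $\sum_d Q(d)\sqrt d<\infty$ already gives $\M_k(y)\ll_k y^{1/4}$ without any smoothing, so your ``borderline'' worry is also misplaced. Consequently, even granting every estimate, your plan proves only $\M_k^c(y)=o(\sqrt y)$. The Euler-product check you defer would produce $0$, not $\tfrac12$.

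\textbf{Where the $\tfrac12$ actually lives.} It is an $O(1)$ quantity carried by the terms with bounded Bessel argument, i.e.\ $d\gg s\sqrt y$ up to constants. That is exactly the range $z\le Z$ that you bound trivially, and only to $O(Z)$ at best. Extracting it needs an input your proposal lacks: the mean value of $b(d)=d^2Q(d)$, namely $\sum_{n\le x}b(n)=\Delta x+O(\sqrt x)$ with $\Delta=\frac{2\pi}{\alpha\zeta(2)}$. Heuristically, replace $Q(d)$ by $\Delta d^{-2}$ and the $d$-sum by an integral. This gives $\sum_dQ(d)J_{k-1}(a/d)\approx\frac{\Delta}{a}\int_0^\infty J_{k-1}(u)\,du=\frac{\Delta}{a}$ with $a=4\pi s\sqrt y$, hence an average value $\frac{\alpha\Delta\zeta(2)}{4\pi}=\tfrac12$.

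\textbf{Why smoothing is essential, and what the paper does.} Write the $d$-sum as $a^{-2}\sum_d b(d)G(d/a)$. The kernel $G(x)=x^{-2}J_{k-1}(1/x)$ oscillates with size $x^{-3/2}$ as $x\to0$, so it is not absolutely integrable; this is where the $y^{1/4}$ oscillation from small $d$ comes from. The paper integrates the $u$-average exactly, using an antiderivative $J$ with $\frac{d}{dx}\big[J(x)/x^4\big]=J_{k-1}(x)/x^4$. This reduces the problem to the paper's $S(a)=\sum_d dQ(d)J(a/d)=\frac1a\sum_d b(d)F(d/a)$, whose kernel $F(x)=x^{-1}J(1/x)\ll x^{-1/2}$ is integrable. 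Partial summation against $\Delta x+O(\sqrt x)$, together with $\int_0^\infty J(u)u^{-1}\,du=-\frac14$, gives $S(a)=-\frac{\Delta}{4}+O_k(a^{-1/6})$. Then $\frac{\alpha}{2\pi}\sum_s s^{-2}\big[S(4\pi s\sqrt y)-c^2S(4\pi s\sqrt{y/c})\big]=\frac{c^2-1}{4}+o(1)$, which is the limit $\tfrac12$.

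Your continuity argument via the finite $r$-sum is fine, since your Poisson computation does identify $\M_k$ with that sum for $k\ge4$.
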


The proof of this theorem given in \cite{Zubrilina} starts by showing in \cite[Lemma 7.3]{Zubrilina} that (assuming the Riemann Hypothesis) a certain function $R(T)$ is bounded by $R(T) \ll_\varepsilon  T^{-2+\varepsilon}$. Then the proof of Theorem \ref{thm:second-main-result} given in \cite{Zubrilina} depends heavily on this exponent $-2+\varepsilon$.

However, this proof strategy seems to not quite work because the bound $R(T) \ll_\varepsilon  T^{-2+\varepsilon}$ given in \cite[Lemma 7.3]{Zubrilina} is not amply justified.
Let 
\begin{align}
    L(s):=\frac{\zeta(s+2)}{\zeta(2s+4)}\prod_{p}\left(1+\frac{-1+p+2p^2}{(1-p-2p^2+p^4)(1+p^{2+s})}\right).
\end{align}
Then the proof of \cite[Lemma 7.3]{Zubrilina} proceeds by attempting to shift the integral of $L(s)\frac{T^s}{s}$ from the contour given by $\mathrm{Re}(s)=1$ to the contour given by $\mathrm{Re}(s)=-2+\varepsilon$. The justification given for this shift was that $L(s)\frac{T^s}{s}$ only has poles at $s=0,-1$ in the vertical strip $-2+\varepsilon \le \mathrm{Re}(s) \le 1$. However, it seems this contour shifting argument does not go through because $\zeta(2s+4)$ also has zeros on the vertical line $\mathrm{Re}(s)=-\frac{7}{4}$.

Because of this issue, the second main goal of this paper is to give an alternative proof of Theorem \ref{thm:second-main-result}, using a different method. In fact, the proof we give is unconditional; we do not need to assume the Riemann Hypothesis. Additionally, we will obtain power-savings in the error term of \eqref{eqn:second-main-thm__intro-statement__error-bound}, namely $O_{k,c}\big(y^{\frac{-1}{12}}\big)$. See the statement of Theorem \ref{thm:second-main-result} given in Section \ref{sec:proof-of-second-main-result} for the precise statement of our result.

In closing, we would like to point out that the results we prove in this paper are also necessary to extend the work of Kundu and M\"uller in \cite{kundu-muller}. This paper proved a murmuration result for $k=2$, and then suggested an extension to general weight $k$ \cite[Subsection 1.1]{kundu-muller}. However, the main obstacle to this extension was that the arguments of \cite{kundu-muller} only deal with unweighted summations (i.e. without $U_{k-2} \lrp{\frac{r\sqrt{N}}{2\sqrt P}}$ inside the Skoruppa-Zagier trace formula summation). The arguments of the first half of the paper (i.e. dealing with weighted summations) are precisely the ones needed to overcome this obstacle. 
% Note that this obstacle was not relevant in the setting of \cite{kundu-muller} at $k=2$ because in this case, $U_{k-2} \lrp{\frac{r\sqrt{N}}{2\sqrt P}}$ is identically $1$.

Finally, we give an outline of the paper. Section \ref{sec:proof-of-main-prop} proves the weighted class number estimate Proposition \ref{prop:main-proposition} (which replaces \cite[Proposition 3.2]{Zubrilina}). Then Section \ref{sec:proof-of-main-thm} uses Proposition \ref{prop:main-proposition} to prove Theorem \ref{thm:main-result}. Next, Section \ref{sec:proof-of-prop-S-estimate} proves Proposition \ref{prop:S-estimate}. Lastly, Section \ref{sec:proof-of-second-main-result} uses Proposition \ref{prop:S-estimate} to prove Theorem \ref{thm:second-main-result}.

%%%%%%%%%%%%%%%%%%%%%%%%%%%%%%%%%%%%%%%%%%%%%%%%%%%%%%
%%%%%%%%%%%%% PROVING THE MODIFIED PROPOSITION %%%%%%%  
%%%%%%%%%%%%%%%%%%%%%%%%%%%%%%%%%%%%%%%%%%%%%%%%%%%%%%

\section{Proof of Proposition \ref{prop:main-proposition}} \label{sec:proof-of-main-prop}

Throughout Sections \ref{sec:proof-of-main-prop} and \ref{sec:proof-of-main-thm}, we will use the two constants
\begin{align}
    A_0 &:= \frac{1}{\zeta(2)\pi} \prod_{p}\frac{p^3+p^2-1}{(p+1)^2(p-1)}, 
    \label{eqn:def-A0}
    \\
    B_0 &:= \frac{1}{\zeta(2)\pi} \prod_{p} \frac{p^4-2p^2-p+1}{(p^2-1)^2}.
\end{align}
Note that \cite{Zubrilina} instead used the constants $A = \zeta(2)\pi A_0$ and $B = \zeta(2)\pi B_0$.
Also, since our results are asymptotic in $X$, we will assume throughout that $X,Y$ are sufficiently large (so that when $Y = o(X)$, for example, we can utilize the bound $Y < X$).

The goal of this section is to prove Proposition \ref{prop:main-proposition}, which will replace \cite[Proposition 3.2]{Zubrilina} in our proof of Theorem \ref{thm:main-result}. For reference, we first write down \cite[Proposition 3.2]{Zubrilina}, as stated in \cite{Zubrilina}.

\begin{unnumberedproposition}[\cite{Zubrilina}, Proposition 3.2]
Let $P>2$ be prime and let $[X,X+Y]$ be an interval of length $Y=o(X)$. Let $y:= \frac{P}{X}$. Then as $X\to\infty$,
\begin{align}
       &\frac{1}{XY} 
       \sum_{1\leq r \leq 2\sqrt{\frac{P}{X}}}
       \hspace{2mm}
       \sideset{}{^\square}\sum_{\substack{N\in[X,X+Y] \\ P\nmid N}}
        H_1(r^2N^2-4PN)   \\
        &= \sum_{1\leq r\leq 2\sqrt{y}}B_0\nu(r)\sqrt{4y-r^2} +O_\varepsilon
        \lrp{
        \lrb{
            \frac{P^{\frac{11}{10}}}{X^{\frac{9}{10}}Y^{\frac{2}{5}}}
            +\frac{PY}{X^2} 
            +\frac{PY^{\frac{1}{2}}}{X^{\frac{3}{2}}}
            +\frac{P}{X^{\frac{1}{2}}Y^{\frac{13}{18}}}
            +\frac{P}{XY^{\frac{1}{9}}}
        }
        \!\lrp{PXY}^\varepsilon
        \!}\!.
\end{align}
\end{unnumberedproposition}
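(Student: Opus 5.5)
Since $r \le 2\sqrt{P/X}$, I will interchange the order of summation and treat each fixed $r$ separately. The aim is to compute
\begin{align}
    S_r := \sideset{}{^\square}\sum_{\substack{N\in[X,X+Y]\\ P\nmid N}} H_1(r^2N^2-4PN)
\end{align}
asymptotically, with main term $B_0\nu(r)XY\sqrt{4y-r^2}$. The starting point is the Dirichlet class number formula. I write $r^2N^2-4PN = -N(4P-r^2N)$ and express $H_1$ through $L(1,\chi_D)$ together with a divisor sum over the square part of the discriminant:
\begin{align}
    H(|D|) = \frac{\sqrt{|D|}}{\pi}\sum_{f^2\mid D}\frac{1}{f}\,L\bigl(1,\chi_{D/f^2}\bigr)\cdot(\text{local correction at }f),
\end{align}
with the usual adjustments for non-fundamental discriminants. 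Next I expand $L(1,\chi)$ as a Dirichlet series $\sum_m \chi(m)/m$ truncated at $m \le M$, bounding the tail by P\'olya--Vinogradov or Burgess. This leaves averages over squarefree $N$ in $[X,X+Y]$ of the form
\begin{align}
    \sqrt{N(4P-r^2N)}\;\Bigl(\frac{r^2N^2-4PN}{m}\Bigr)
\end{align}
weighted by $1/m$ and by the square-divisor data.

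For the main term I detect squarefreeness via $\sum_{d^2\mid N}\mu(d)$ and split $N$ into residue classes modulo $4m\,d^2\,f^2$. On each class the Kronecker symbol is periodic in $N$, so only its average over residues survives. Computing that average is a purely local problem at each prime $p$, and the Euler product over $p$ has to reproduce $B_0$ together with the $r$-dependent factor $\nu(r)$. The factor $\nu(r)$ should come from the primes $p\mid r$, where $r^2N^2-4PN \equiv -4PN \pmod p$ behaves differently. Because $Y=o(X)$, I can replace $\sqrt{N(4P-r^2N)}$ by $X\sqrt{4y-r^2}$ up to an acceptable error, and the excluded condition $P\nmid N$ costs only $O(Y/P)$ terms.

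The error terms come from three sources: the characters that are nontrivial in the residue-class decomposition, the tail of the $L$-series, and large $d$ and $f$. I will bound them with the large sieve for real characters (Heath-Brown) or the same Burgess-type estimates used in the unweighted proof, and then optimize the truncation parameters. This should recover the listed error exponents.

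The main obstacle is the local density computation at primes dividing $r$. There one has to check carefully that the product of the local averages, summed against the conductor decomposition, gives exactly $B_0\nu(r)$ and that the contributions from $p=2$ and from $P$ are handled correctly. A second difficulty is that at the endpoint $r\approx 2\sqrt{y}$ the factor $4P-r^2N$ can change sign inside $[X,X+Y]$. I will handle this by bounding those boundary $N$ trivially using $H(n)\ll n^{1/2+\varepsilon}$, which contributes within the $PY/X^2$-type error.
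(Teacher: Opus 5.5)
\textbf{Verdict: genuine gap.} Your architecture is right. It is the reduction the paper uses in the proof of Proposition~\ref{prop:main-proposition}, whose $k=2$ case is essentially this statement: handle $r<2\sqrt{P/(X+Y)}$ with a per-$r$ asymptotic for $\sum^\square_{N}H_1(r^2N^2-4PN)$, bound the boundary range of $r$ trivially, and sum over $r$. The problem is that you only announce the per-$r$ asymptotic, and it is the entire analytic content.

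The paper takes it from \cite[Proposition 3.6]{Zubrilina}, restated as Lemma~\ref{lem:bound-sum-class-num}. For $r^2(X+Y)<4P$ that result gives the $N$-sum as $YB_0\nu(r)\sqrt{4PX-r^2X^2}$ plus $O_\varepsilon\bigl([P^{3/5}X^{3/5}Y^{3/5}+P^{1/2}Y^2X^{-1/2}+rX^{1/2}Y^{3/2}+P^{1/2}XY^{5/18}+P^{1/2}Y^{8/9}X^{1/2}](PX)^\varepsilon\bigr)$. Summing over $r\le 2\sqrt{y}$ and dividing by $XY$ gives exactly the five listed terms.

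In your text, the two load-bearing claims are stated only as expectations:
\begin{enumerate}
\item that the local averages, summed over the truncated M\"obius, square-divisor and character-modulus parameters, assemble into exactly $B_0\nu(r)$;
\item that ``optimizing the truncation parameters'' yields the exponents $\frac{3}{5}$, $\frac{5}{18}$, $\frac{8}{9}$.
\end{enumerate}
Naming the large sieve or Burgess does not determine any of these exponents, so nothing in the proposal actually proves the displayed error term. When you do the local computation, heed the paper's Remark after Lemma~\ref{lem:bound-sum-class-num}. The $N$-interval is shorter than $P$, so there is no equidistribution modulo $P$. All truncation moduli must therefore stay below $P$, and one then uses that the Euler factor of $B_0$ at $P$ is $1+O(P^{-3})$.

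Your boundary estimate is also wrong as stated. The interval $[2\sqrt{P/(X+Y)},\,2\sqrt{P/X}]$ may contain one integer $r$ even when its length, which is $\ll P^{1/2}YX^{-3/2}$, is less than $1$. For that $r$ we have $|r^2N^2-4PN|\ll PY$ for every $N\in[X,X+Y]$. The trivial bound then gives a contribution $\ll (PY)^{1/2+\varepsilon}/X$ to the left side. The main-term summand $B_0\nu(r)\sqrt{4y-r^2}$ is of the same size, since $4y-r^2\le 4PY/X^2$.

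This is not of $PY/X^2$-type when $PY<X^2$, for instance when $P\asymp X$. It is precisely the ``$+1$'' whose omission the paper points out in the Remark after Lemma~\ref{lem:Uk-split-main-error}.

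The statement survives only because the sum over $r$ is nonempty only if $P\ge X/4$. In that case $(PY)^{1/2}/X\le 2PY^{1/2}/X^{3/2}$, which is one of the listed terms. The correct bookkeeping for the boundary range is $(1+P^{1/2}YX^{-3/2})(PY)^{1/2+\varepsilon}/X\ll \bigl(PY^{1/2}X^{-3/2}+PYX^{-2}\bigr)(PY)^{\varepsilon}$, using $Y\le X$ for the second term.
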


In Proposition \ref{prop:main-proposition}, we will modify this result to 
also account for the factor of $U_{k-2}\lrp{\frac{r\sqrt{N}}{2\sqrt{P}}}$ that was omitted in \cite{Zubrilina}.
For technical reasons, we will also swap the summations to sum over $1 \le r \le 2\sqrt{\frac{P}{N}}$ rather than $1 \le r \le 2\sqrt{\frac{P}{X}}$. 
\begin{proposition} \label{prop:main-proposition}
    Fix an even integer $k\ge2$.
    Let $P>2$ prime and $[X,X+Y]$ be an interval of length $Y=o(X)$. Let $y:= \frac{P}{X}$ and
    \begin{align}  
        \EE = \EE(P,X,Y,\varepsilon) :=
        \lrb{
            1 
            + 
            \frac{
                P^\frac{1}{10} X^\frac{3}{5} 
            }{
                Y^\frac{9}{10}
            }
            +
            \frac{X}{Y^\frac{11}{9}}
            +
            \frac{X^\frac{1}{2}}{Y^\frac{11}{18}}
        }
        (PX)^\varepsilon.
    \end{align}
    Then 
    \begin{align}\label{eq:main-prop-LHS}
       &\frac{1}{XY} 
       \sideset{}{^\square}\sum_{\substack{N\in[X,X+Y] \\ P\nmid N}}
       \hspace{2mm}
       \sum_{1\leq r \leq 2\sqrt{\frac{P}{N}}} U_{k-2}\lrp{\frac{r\sqrt{N}}{2\sqrt{P}}} H_1(r^2N^2-4PN) \\
       &= B_0\sum_{1\leq r\leq 2\sqrt{y}}U_{k-2}\lrp{\frac{r}{2\sqrt{y}}}\nu(r)\sqrt{4y-r^2} 
       +
       O_\varepsilon\lrp{ k
        \lrb{
            \frac{P^\frac{1}{2}}{X^\frac{1}{2}}
            + 1
        }
        \frac{P^\frac{1}{2} Y^\frac{1}{2}}{X}
        \EE
       }.
       \label{eqn:main-prop-error-term}
    \end{align}
\end{proposition}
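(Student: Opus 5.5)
We plan to reduce the weighted statement to the unweighted one by partial summation in $N$, for each fixed $r$. The input is \cite[Proposition 3.2]{Zubrilina}, in the form its proof really gives: for each fixed $r$, an asymptotic for the inner sum over squarefree $N$ in any subinterval $[X,X']\subseteq[X,X+Y]$, with $P\nmid N$ and $r\le 2\sqrt{P/N}$. To get this, we first swap the order of summation so that $r$ is outside. The range of $N$ for a given $r$ is then $N\in[X,\min(X+Y,4P/r^2)]$.

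\textbf{Step 1 (unweighted asymptotic for each $r$ and each subinterval).} We establish, uniformly for subintervals $I=[X,X+t]$ with $t\le Y$,
\begin{align}
\sideset{}{^\square}\sum_{\substack{N\in I\\ P\nmid N,\ r^2N\le 4P}} H_1(r^2N^2-4PN)
= B_0\nu(r)\int_{I\cap\{r^2u\le 4P\}}\sqrt{4Pu-r^2u^2}\,du \;+\; \mathrm{Err}_r(t).
\end{align}
This follows Zubrilina's method. We write $H_1$ as $L(1,\chi)$ times a divisor sum and truncate the $L$-value. We then count squarefree $N$ in arithmetic progressions, Poisson-summing the smooth part and bounding the rest by large sieve or Burgess-type estimates. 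The only change from \cite{Zubrilina} is that the interval is allowed to shrink and is cut off at $N=4P/r^2$. The error terms in $\EE$ are exactly the shape these steps produce, and summing over $r\ll\sqrt{P/X}+1$ accounts for the factor $[P^{1/2}X^{-1/2}+1]$.

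\textbf{Step 2 (partial summation).} We apply Abel summation in $N$ with weight $g_r(N)=U_{k-2}(r\sqrt N/2\sqrt P)$. We have $|U_{k-2}|\le k-1$ on $[-1,1]$. The total variation of $g_r$ on $[X,X+Y]$ is $O(k^3 Y r/\sqrt{PX})$, since $U_{k-2}'\ll k^3$. The weaker bound $O(k)$ comes from monotonicity-piece counting, because $U_{k-2}(\cos\theta)$ has $O(k)$ monotone pieces. We prefer that $O(k)$ bound so as to match the factor $k$ in the error. Hence the weighted sum equals the integral of $g_r$ against the main term, plus $O(k\max_t|\mathrm{Err}_r(t)|)$.

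\textbf{Step 3 (main term).} Since $Y=o(X)$, we replace $u$ by $X$ inside the integrand. The quantity $\sqrt{4Pu-r^2u^2}\,U_{k-2}(r\sqrt u/2\sqrt P)$ is $X\sqrt{4y-r^2}\,U_{k-2}(r/2\sqrt y)$ up to an error. Where the square root is not near zero, this error is $O(kY\sqrt{P/X})$ relative to the main size. Near the endpoint $r\approx 2\sqrt{P/N}$ the square root has a $\sqrt{\ }$-singularity. This can only affect at most one value of $r$, and its contribution is bounded by $\ll k\,Y\sqrt{PY}$. After dividing by $XY$, this gives $kP^{1/2}Y^{1/2}/X$, which fits the stated error. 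Summing over $r$ gives the stated main term $B_0\sum_r U_{k-2}(r/2\sqrt y)\nu(r)\sqrt{4y-r^2}$.

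The main obstacle is Step 1. We need the class-number sum asymptotic uniformly over shrinking subintervals and with the $r$-dependent cutoff, not just over the full window summed over $r$, and we must check that Zubrilina's error analysis survives this with errors bounded by $P^{1/2}Y^{1/2}\EE$ per $r$. We would first try to reprove it by tracking every error term of \cite[Proposition 3.2]{Zubrilina} with an arbitrary endpoint $X'$, since those bounds depend only on the interval length.
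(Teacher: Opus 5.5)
Your plan is sound in outline, but it takes a different route from the paper. You keep the weight $g_r(N)=U_{k-2}\big(\frac{r\sqrt N}{2\sqrt P}\big)$ and move it onto the main term by Abel summation in $N$. This needs the unweighted asymptotic on every initial segment $[X,X+t]$, $t\le Y$, and the arithmetic error is then multiplied only by $\sup|g_r|+\mathrm{TV}(g_r)\ll k$.

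The paper instead freezes the weight at $N=X$ for $r<2\sqrt{P/(X+Y)}$ and uses the unweighted asymptotic only on the full interval. It bounds $E(r,N)=g_r(N)-g_r(X)$ pointwise by $\frac{12k}{4y-r^2}\frac{PY}{X^2}$ (Lemmas \ref{lem:E(r,N)-bound} and \ref{lem:lipschitz-const-at-x0-for-U}), times the crude upper bound $XY\sqrt{4y-r^2}\,\EE$. The $r\in[2\sqrt{P/(X+Y)},2\sqrt{P/N}]$ are bounded trivially via $|r^2N^2-4PN|\ll PY$. The paper's route needs input on one interval only; yours avoids the (harmless) factor $\EE$ on the weight-variation term.

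What you call the main obstacle is in fact free. The per-$r$ input you want is \cite[Proposition 3.6]{Zubrilina} (Lemma \ref{lem:bound-sum-class-num}), valid for any interval of length $o(X)$ with $r^2(X+Y)<4P$. Every term of its error is increasing in the interval length. So applying it to $[X,X+t]$, and using the trivial bound for very short $t$, gives $\max_{t\le Y}|\mathrm{Err}_r(t)|\ll P^{\frac12}Y^{\frac32}\EE$ with no re-proof and no cut-off version. Note that the per-$r$ size before dividing by $XY$ is $P^{\frac12}Y^{\frac32}\EE$, not $P^{\frac12}Y^{\frac12}\EE$. Its main term $tB_0\nu(r)\sqrt{4PX-r^2X^2}$ is linear in $t$, so Abel summation directly yields $B_0\nu(r)\sqrt{4PX-r^2X^2}\int_X^{X+Y}g_r(u)\,du$.

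Step 3, however, is not correct as written, though it is easily repaired. Your dichotomy misses the non-boundary $r$ just below $2\sqrt{P/(X+Y)}$. The dichotomy was: pointwise error $O(kY\sqrt{P/X})$ where the square root is not small, plus at most one exceptional $r$. For these missed $r$, the only available bound on the variation of $g_r$ over $[X,X+Y]$ is $\frac{12k}{4y-r^2}\frac{PY}{X^2}$. That gives a per-$r$ replacement error $\ll \frac{kPY^2}{X\sqrt{4y-r^2}}$, which is larger than your $kY^2\sqrt{P/X}$ by the factor $\sqrt{y/(4y-r^2)}$. You must sum over $r$ using $\sum_{1\le r<2\sqrt{P/(X+Y)}}(4y-r^2)^{-\frac12}\le \frac{\pi}{2}+\frac{X}{\sqrt{2PY}}$. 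It is this sum that produces $kPY/X^2+kP^{\frac12}Y^{\frac12}/X$ after dividing by $XY$. (The crude bound $|U_{k-2}'|\ll k^3$ also closes the argument, but with $k^3$ in place of $k$.)

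Likewise, $[2\sqrt{P/(X+Y)},2\sqrt{P/X}]$ contains $\ll 1+\sqrt{P/X}\,Y/X$ integers, not at most one. Each contributes $\ll kY(PY)^{\frac12+\varepsilon}$ to both sides, and the total still fits the stated error.

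Finally, your monotone-piece count does not give $\mathrm{TV}=O(k)$, since the pieces near $\pm1$ have variation of order $k$. This is immaterial: $\sup|g_r|\le k-1$ already supplies the factor $k$, and on the relevant $x$-range (length at most $Y/2X$) the variation is $O(k^3Y/X)=o(k)$.
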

\begin{proof}
    Observe that 
    \begin{align}
        & \frac{1}{XY} \sideset{}{^\square}\sum_{\substack{N\in[X,X+Y] \\ P\nmid N}}\sum_{1\leq r \leq 2\sqrt{\frac{P}{N}}} U_{k-2}\lrp{\frac{r\sqrt{N}}{2\sqrt{P}}} H_1(r^2N^2-4PN) \\
        %%%%%%%%%%%%%%%%%%%%%%%%%%%%%%%%%%%%%%%%%%
        &=\frac{1}{XY}
        \sum_{1\leq r< 2\sqrt{\frac{P}{X+Y}}}\sideset{}{^\square} \sum_{\substack{N\in [X,X+Y] \\ P \nmid N}}
        U_{k-2}\lrp{\frac{r\sqrt{N}}{2\sqrt{P}}} H_1(r^2N^2-4PN)\\
        %%%%%%%%%%%%%%%%%%%%%%%%
        &  \quad + \frac{1}{XY}
        \sideset{}{^\square}\sum_{\substack{N\in[X,X+Y] \\ P\nmid N}}
        \sum_{2\sqrt{\frac{P}{X+Y}}\le r \le 2\sqrt{\frac{P}{N}}}  U_{k-2}\lrp{\frac{r\sqrt{N}}{2\sqrt{P}}} H_1(r^2N^2-4PN). \\
        %%%%%%%%%%%%%%%%%%%%%%%%%%%%%%%%%%%%%%%%%%
        &= \frac{1}{XY}\sum_{1\leq r< 2\sqrt{\frac{P{}}{X+Y}}} U_{k-2}\lrp{\frac{r\sqrt{X}}{2\sqrt{P}}}
        \sideset{}{^\square}\sum_{\substack{N\in[X,X+Y]\\ P\nmid N}} H_1(r^2N^2-4PN) \label{eq:initial-double-sum-main} \\
        %%%%%%%%%%%%%%%%%%%%%%%%%
        &\quad + \frac{1}{XY}\sum_{1\leq r< 2\sqrt{\frac{P{}}{X+Y}}}\,\, \sideset{}{^\square }\sum_{\substack{N\in[X,X+Y]\\ P\nmid N}} E(r,N)H_1(r^2N^2-4PN)  \label{eq:initial-double-sum-first-error} \\
        %%%%%%%%%%%%%%%%%%%%%%%%%
        &\quad + \frac{1}{XY}
        \sideset{}{^\square}\sum_{\substack{N\in[X,X+Y] \\ P\nmid N}}
        \sum_{2\sqrt{\frac{P}{X+Y}}\le r \le 2\sqrt{\frac{P}{N}}}  U_{k-2}\lrp{\frac{r\sqrt{N}}{2\sqrt{P}}} H_1(r^2N^2-4PN),
        \label{eq:initial-double-sum-second-error}
    \end{align}
    where 
    \begin{align} \label{eqn:def-E(r,N)}
        E(r,N) := U_{k-2}\lrp{\frac{r}{2}\frac{\sqrt{N}}{\sqrt{P}}}-
        U_{k-2}\lrp{\frac{r}{2}\frac{\sqrt{X}}{\sqrt{P}}}.
    \end{align}
    We show that \eqref{eq:initial-double-sum-main} constitutes the main term of the desired result (up to the claimed error term), and that
    \eqref{eq:initial-double-sum-first-error}
    and
    \eqref{eq:initial-double-sum-second-error}
    are bounded by the claimed error term.

    First, we have that
    \begin{align}
        \eqref{eq:initial-double-sum-main} 
        &= \frac{1}{XY}\sum_{1\leq r<2\sqrt{\frac{P{}}{X+Y}}} U_{k-2}\lrp{\frac{r\sqrt{X}}{2\sqrt{P}}}
        \sideset{}{^\square}\sum_{\substack{N\in[X,X+Y]\\ P\nmid N}} H_1(r^2N^2-4PN) \\
        %%%%%%%%%%%%%%%%%%%%%%%%%%%%%%%%%%%%%%%%
        &= \frac{1}{XY} \sum_{1\leq r< 2\sqrt{\frac{P}{X+Y}}} 
        U_{k-2}\lrp{\frac{r\sqrt{X}}{2\sqrt{P}}} XYB_0\nu(r)\sqrt{4y-r^2} \\ 
        &\quad + \frac{1}{XY} \sum_{1\leq r< 2\sqrt{\frac{P}{X+Y}}}  O(k) 
        \cdot O_\varepsilon\lrp{P^\frac{1}{2} Y^\frac{3}{2} \EE }
        \qquad \text{(by Lemma \ref{lem:bound-sum-class-num})}
        \\
        %%%%%%%%%%%%%%%%%%%%%%%%%%%%%%%%%%%%%%%%
        &= B_0 \sum_{1\leq r< 2\sqrt{\frac{P}{X+Y}}} 
        U_{k-2}\lrp{\frac{r\sqrt{X}}{2\sqrt{P}}}\nu(r)\sqrt{4y-r^2} \\ 
        &\quad +
         O_\varepsilon\bigg(k \Bigg[
        \frac{1}{XY} \cdot \sqrt{\frac{P}{X}} \cdot P^\frac{1}{2} Y^\frac{3}{2} \EE
        \bigg]
        \Bigg)  \\ 
        %%%%%%%%%%%%%%%%%%%%%%%%%%%%%%%%%%%%%%%
        &= B_0 \sum_{1\leq r\le 2\sqrt{y}} U_{k-2}\lrp{\frac{r}{2\sqrt{y}}}\nu(r) \sqrt{4y-r^2} + O\lrp{k\lrb{\frac{PY^\frac{3}{2}}{X^{\frac{5}{2}}}+\frac{P^\frac{1}{2}Y^\frac{1}{2}}{X}}} \\ 
        &\quad +
         O_\varepsilon\bigg(k \Bigg[
        \frac{P Y^\frac{1}{2}}{X^\frac{3}{2}} \EE
        \bigg] 
        \Bigg) \hspace{45mm} \text{(by Lemma \ref{lem:Uk-split-main-error})} \\ 
        %%%%%%%%%%%%%%%%%%%%%%%%%%%%%%%%%%%%%%%
        &= B_0 \sum_{1\leq r\le 2\sqrt{y}} U_{k-2}\lrp{\frac{r}{2\sqrt{y}}}\nu(r) \sqrt{4y-r^2}   +
        O_\varepsilon\lrp{k\lrb{
            \frac{PY^\frac{3}{2}}{X^{\frac{5}{2}}}
            +
            \frac{P^\frac{1}{2}Y^\frac{1}{2}}{X} 
            +
            \frac{P Y^\frac{1}{2}}{X^\frac{3}{2}} \EE
        }} \\
        &= B_0 \sum_{1\leq r\le 2\sqrt{y}} U_{k-2}\lrp{\frac{r}{2\sqrt{y}}}\nu(r) \sqrt{4y-r^2}   +
        O_\varepsilon\lrp{k\lrb{
            \frac{P^\frac{1}{2}Y^\frac{1}{2}}{X} 
            +
            \frac{P Y^\frac{1}{2}}{X^\frac{3}{2}} 
        } \EE
        } \\
        & \hspace{45mm} \text{\Big(since $\tfrac{PY^\frac{3}{2}}{X^{\frac{5}{2}}} \le \tfrac{P Y^\frac{1}{2}}{X^\frac{3}{2}} \le \tfrac{P Y^\frac{1}{2}}{X^\frac{3}{2}} \EE$ and $\tfrac{P^\frac{1}{2}Y^\frac{1}{2}}{X} \le \tfrac{P^\frac{1}{2}Y^\frac{1}{2}}{X} \EE$ \Big)} \\
        &= B_0 \sum_{1\leq r\le 2\sqrt{y}} U_{k-2}\lrp{\frac{r}{2\sqrt{y}}}\nu(r) \sqrt{4y-r^2}   +
        O_\varepsilon \lrp{k\lrb{
            1
            +
            \frac{P^\frac{1}{2}}{X^\frac{1}{2}}
        } \frac{P^\frac{1}{2}Y^\frac{1}{2}}{X} \EE},
    \end{align}
    which constitutes the claimed main term.

    Second, we have that
    \begin{align}
        &|\eqref{eq:initial-double-sum-first-error}| \\
        &=
        \lrabs{
        \frac{1}{XY}\sum_{1\leq r< 2\sqrt{\frac{P}{X+Y}}} \ \sideset{}{^\square}\sum_{\substack{N\in[X,X+Y]\\ P\nmid N}} E(r,N)H_1(r^2N^2-4PN) 
        }\\
        &\le 
        \frac{1}{XY}\sum_{1\leq r< 2\sqrt{\frac{P}{X+Y}}} \ \sideset{}{^\square}\sum_{\substack{N\in[X,X+Y]\\ P\nmid N}} 
        \frac{12k}{4y-r^2} \frac{PY}{X^2}
        \cdot
        |H_1(r^2N^2-4PN)|
        \quad\ \text{(by Lemma \ref{lem:E(r,N)-bound})}
        \\
        &= \frac{1}{XY}\sum_{1\leq r< 2\sqrt{\frac{P}{X+Y}}} \frac{12k}{4y-r^2} \frac{PY}{X^2} \sideset{}{^\square}\sum_{\substack{N\in[X,X+Y]\\ P\nmid N}} H_1(r^2N^2-4PN) \\
        &\qquad \text{\big(since $r^2N^2-4PN \ne 0$ because $NP$ is squarefree\big)} \\
        &\ll_\varepsilon \frac{1}{XY}\sum_{1\leq r< 2\sqrt{\frac{P}{X+Y}}} \frac{12k}{4y-r^2} \frac{PY}{X^2} \cdot  
            XY \sqrt{4y-r^2} \EE \qquad\qquad \text{(by Lemma \ref{lem:bound-sum-class-num})}
        \\
        &= \frac{12kPY}{X^2} 
            \EE
        \sum_{1\leq r< 2\sqrt{\frac{P}{X+Y}}} \frac{1}{\sqrt{4y-r^2}} \\
        &\le \frac{12kPY}{X^2} 
            \EE
        \lrb{
            \int_1^{2 \sqrt{\frac{P}{X+Y}}} \frac{1}{\sqrt{4y-r^2}} dr
            + 
            \frac{1}{\sqrt{4y-4\frac{P}{X+Y}}}
        } \\
        &\le \frac{12kPY}{X^2} 
            \EE
        \lrb{
            \int_0^{2 \sqrt{y}} \frac{1}{\sqrt{4y-r^2}} dr
            + 
            \frac{1}{2 \sqrt{\frac{P}{X}} \sqrt{1-\lrp{1+\frac{Y}{X}}^{-1}}}
        } \\
        &\le \frac{12kPY}{X^2} 
            \EE
        \lrb{
            \frac{\pi}{2}
            + 
            \frac{1}{2 \sqrt{\frac{P}{X}} \cdot 
            \sqrt{\frac{1}{2} \frac{Y}{X}}
            }
        } 
        \qquad 
        \text{(by a first-order Taylor approximation)}
        \\
        &\ll_\varepsilon  k 
        \lrb{
            \frac{PY}{X^2}
            + 
            \frac{P^\frac{1}{2}Y^\frac{1}{2}}{X}
        } \EE \\
        &= k \lrb{\frac{P^\frac{1}{2}Y^\frac{1}{2}}{X}+1}\frac{P^\frac{1}{2}Y^\frac{1}{2}}{X} \EE \\
        &\ll_\varepsilon  k \lrb{\frac{P^\frac{1}{2}}{X^\frac{1}{2}}+1}\frac{P^\frac{1}{2}Y^\frac{1}{2}}{X} \EE,
    \end{align}
    which is bounded by the claimed error term.

    Third, observe that for $r$ in the range $2\sqrt{\frac{P}{X+Y}}\leq r\leq 2\sqrt{\frac{P}{X}}$ and $N\in[X,X+Y],$ we have the bound
    \begin{align}
        &\lrabs{r^2N^2-4PN} \leq 4PN\lrp{1-\frac{N}{X+Y}}\leq 4P(X+Y)\lrp{1-\frac{X}{X+Y}}\ll PY, \\
        &\text{so that} \qquad
        H_1(r^2N^2-4PN) \ll_\varepsilon  (PY)^{\frac{1}{2}+\varepsilon}. \label{eqn:H-bound-from-r2N24pN-disc-bound}
    \end{align}
    Hence
    we can bound \eqref{eq:initial-double-sum-second-error} by 
    \begin{align}
        \eqref{eq:initial-double-sum-second-error}
        &= \frac{1}{XY}
        \sideset{}{^\square}\sum_{\substack{N\in[X,X+Y] \\ P\nmid N}}
        \sum_{2\sqrt{\frac{P}{X+Y}}\le r \le 2\sqrt{\frac{P}{N}}}  U_{k-2}\lrp{\frac{r\sqrt{N}}{2\sqrt{P}}} H_1(r^2N^2-4PN) \\
        &\ll_\varepsilon  \frac{1}{XY} \sideset{}{^\square}\sum_{\substack{N\in [X,X+Y]\\P\nmid N}} \sum_{2\sqrt{\frac{P}{X+Y}}\le r \le 2\sqrt{\frac{P}{N}}} k(PY)^{\frac{1}{2}+\varepsilon}  
        \qquad  \text{\big(since $U_{k-2}(x)\ll k$ and by \eqref{eqn:H-bound-from-r2N24pN-disc-bound}\big)}
        \\ 
        & \ll_\varepsilon  \frac{1}{XY}\cdot Y\lrb{2\sqrt{\frac{P}{X}}-2\sqrt{\frac{P}{X+Y}}+1} k(PY)^{\frac{1}{2}+\varepsilon} \\ 
        &\ll_\varepsilon  \frac{k}{X} \lrb{\sqrt{\frac{P}{X}}
        \lrp{1-\sqrt{\frac{X}{X+Y}}}+1} (PY)^{\frac{1}{2}+\varepsilon} \\
        & \le \frac{k}{X} \lrb{\sqrt{\frac{P}{X}} \lrp{\frac{1}{2} \frac{Y}{X}} + 1} (PY)^{\frac{1}{2}+\varepsilon} 
        \qquad \text{(by a first order Taylor approximation)}
        \\
        &\ll_\varepsilon   k\lrb{\frac{PY^{\frac{3}{2}}}{X^{\frac{5}{2}}}+\frac{P^\frac{1}{2}Y^\frac{1}{2}}{X}}(PY)^\varepsilon \\
        &\ll_\varepsilon  k\lrb{\frac{PY^\frac{1}{2}}{X^{\frac{3}{2}}}+\frac{P^\frac{1}{2}Y^\frac{1}{2}}{X}}(PX)^\varepsilon \\
        &\ll_\varepsilon  k\lrb{
            \frac{P^\frac{1}{2}}{X^\frac{1}{2}} 
            + 1
        }\frac{P^\frac{1}{2} Y^\frac{1}{2}}{X} \EE, 
    \end{align}
    which is again bounded by the claimed error term. This completes the proof.
\end{proof}

In the above proof, we used the following lemma.
\begin{lemma} \label{lem:bound-sum-class-num}
    Let $P > 2$ be prime and let $[X, X+Y]$ be an interval of length $Y = o(X)$. Assume further that $r < \sqrt{\frac{4P}{X+Y}}$. Then
    \begin{align}
        \sideset{}{^\square} \sum_{\substack{N \in [X, X+Y] \\ P \nmid N}} H_1(r^2N^2-4PN) &= XYB_0\nu(r) \sqrt{4y-r^2}  + O_\varepsilon \lrp{P^\frac{1}{2} Y^\frac{3}{2} \EE} \\[-15pt]
        &\ll_\varepsilon  
        XY \sqrt{4y-r^2} \,\EE,
    \end{align}
    where
    \begin{align}  
        \EE = \EE(P,X,Y,\varepsilon) :=
        \lrb{
            1 
            + 
            \frac{
                P^\frac{1}{10} X^\frac{3}{5} 
            }{
                Y^\frac{9}{10}
            }
            +
            \frac{X}{Y^\frac{11}{9}}
            +
            \frac{X^\frac{1}{2}}{Y^\frac{11}{18}}
        }
        (PX)^\varepsilon,
    \end{align}
    as defined in Proposition \ref{prop:main-proposition}.
\end{lemma}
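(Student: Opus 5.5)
Our plan is to obtain the lemma by re-running, for each fixed $r$, Zubrilina's argument behind \cite[Proposition 3.2]{Zubrilina}, and tracking the error terms uniformly in $r$ (the unnumbered proposition only gives the sum over $r$). The second inequality follows from the first. Indeed, for $r<2\sqrt{P/(X+Y)}$ we have
\begin{align}
    4y-r^2 \ge 4\frac{P}{X}-4\frac{P}{X+Y} \gg \frac{PY}{X^2},
\end{align}
so $\sqrt{4y-r^2}\gg P^{1/2}Y^{1/2}/X$. Hence $P^{1/2}Y^{3/2}\ll XY\sqrt{4y-r^2}$, and $\nu(r)\ll 1$, $\EE\ge 1$.

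For the asymptotic, write $D=r^2N^2-4PN=N(r^2N-4P)$, which is negative, and start from the class number formula. After separating the squarefree part, express $H_1(D)$ as
\begin{align}
    \frac{\sqrt{|D|}}{\pi}\sum_{f^2\mid D}\frac{1}{f}\, L\!\left(1,\chi_{D/f^2}\right)
\end{align}
with the usual local correction factors at the primes dividing $f$. Next expand $L(1,\chi)$ as a sum $\sum_{m} \chi(m)/m$ truncated at a parameter $M$, bounding the tail by P\'olya--Vinogradov or Burgess on average over $N$. Detect squarefreeness of $N$ by $\sum_{d^2\mid N}\mu(d)$. The main term then comes from counting $N\in[X,X+Y]$ in residue classes modulo $d^2$, $f^2$ and $m$ for which the relevant congruence conditions on $r^2N^2-4PN$ hold. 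These local densities multiply to Euler products, and summing them produces $B_0\nu(r)$; the $r$-dependence enters only through primes $p\mid r$, where $D\equiv -4PN$ modulo $p$ changes the local count. The archimedean factor is
\begin{align}
    \frac{1}{\pi}\sum_{N}\sqrt{4PN-r^2N^2}\approx Y X\sqrt{4y-r^2},
\end{align}
using $N=X(1+O(Y/X))$. The resulting Taylor error is $O(P^{1/2}Y^{3/2}\cdot\text{(small)})$, which matches the allowed error.

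The error terms are of three kinds:
\begin{enumerate}
    \item large $d$ in the M\"obius sum, handled by trivially counting $N\equiv 0 \pmod{d^2}$;
    \item large $f$;
    \item the character sum tail beyond $M$, together with the boundary terms when counting in residue classes.
\end{enumerate}
Each is bounded by $\sqrt{|D|}\ll \sqrt{PX}$ times a count of $N$. Balancing the truncation parameters for $d$, $f$ and $M$ against $Y$ gives the terms $P^{1/10}X^{3/5}/Y^{9/10}$, $X/Y^{11/9}$ and $X^{1/2}/Y^{11/18}$. We will then rewrite everything relative to the natural size $P^{1/2}Y^{3/2}$, or more precisely relative to $XY\sqrt{4y-r^2}$.

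The main obstacle is uniformity near the edge $r\approx 2\sqrt{P/(X+Y)}$, where $|D|$ can be as small as $PY$. There the $\sqrt{|D|}$ normalisation degenerates, and bounds that are stated relative to $\sqrt{PX}$ have to be converted into bounds relative to $P^{1/2}Y^{3/2}$. We would handle this by keeping $\sqrt{|D|}$ explicit throughout the argument and only at the end using $\sqrt{|D|}\le \sqrt{4PN}$ together with the lower bound on $4y-r^2$ from the first paragraph. The Euler-product bookkeeping that turns the local densities into $B_0\nu(r)$ is routine but lengthy, and for it we follow \cite{Zubrilina}.
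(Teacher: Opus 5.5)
Your strategy is in substance the paper's: you rest everything on Zubrilina's fixed-$r$ average of $H_1(r^2N^2-4PN)$ and then normalise the error by $P^{1/2}Y^{3/2}$. Your derivation of the second bound from $4y-r^2\ge \frac{4P}{X}-\frac{4P}{X+Y}$ is exactly what the paper does.

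However, there is nothing to re-run. The fixed-$r$ statement is already \cite[Proposition 3.6]{Zubrilina}, valid for $r^2(X+Y)<4P$. Its main term is $YB_0\nu(r)\sqrt{4PX-r^2X^2}=XYB_0\nu(r)\sqrt{4y-r^2}$, and its error is
\[
\Bigl[P^{\frac35}X^{\frac35}Y^{\frac35}+\frac{P^{\frac12}Y^2}{X^{\frac12}}+rX^{\frac12}Y^{\frac32}+P^{\frac12}XY^{\frac{5}{18}}+P^{\frac12}Y^{\frac89}X^{\frac12}\Bigr](PX)^\varepsilon .
\]
The three nontrivial terms of $\EE$ are the first, fourth and fifth of these divided by $P^{1/2}Y^{3/2}$. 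The second and third sum to at most $3P^{1/2}Y^{3/2}$, since $Y\le X$ and $r\le 2\sqrt{P/X}$. That is the whole proof.

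Taken as a self-contained argument, your re-derivation has a hole exactly where the content is. The claim that balancing the truncations in $d$, $f$, $M$ ``gives'' $P^{1/10}X^{3/5}Y^{-9/10}$, $XY^{-11/9}$ and $X^{1/2}Y^{-11/18}$ is read off from the target, not derived. Nothing in your sketch pins down those exponents. They reflect Zubrilina's particular truncations ($d\le Z$, $mg\le Z'$, with $Z=Y^\tau$, $Z'=Y^\sigma$) and tail bounds such as $Z^{-2}+(Z')^{-1/5}$. If you ran your own version, you would have to check that whatever error you obtain is $\ll P^{1/2}Y^{3/2}\EE$.

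Smaller points:
\begin{itemize}
\item Replacing $\sqrt{4PN-r^2N^2}$ by $X\sqrt{4y-r^2}$ does not cost a ``small'' multiple of $P^{1/2}Y^{3/2}$. For $r$ near $2\sqrt{P/(X+Y)}$ it costs $\asymp P^{1/2}Y^{3/2}$, the same size as the main term, so it is absorbed only because $\EE\ge 1$.
\item The edge is otherwise no obstacle, since the error terms in the first statement are absolute. The lower bound on $4y-r^2$ is needed only for the second ($\ll$) statement.
\item If you take the Euler-product bookkeeping from Zubrilina verbatim, you inherit the flaw the paper repairs in the Remark after the lemma. Her Lemma 3.10 includes a local factor at $P$ computed via her Lemma 3.8, which only applies to parameters prime to $P$. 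The fix is to restrict to $P\nmid mdg$, note that $Z,Z'<P$ and $P\nmid r$, and use that the factor at $P$ is $1+O(P^{-3})$.
\end{itemize}
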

This lemma is a restatement of \cite[Proposition 3.6]{Zubrilina}, using the three facts that
    \begin{align}
        \sqrt{4PX-r^2X^2}&=X\sqrt{4y-r^2}, \\
        \frac{P^\frac{1}{2} Y^2}{X^\frac{1}{2}}+rX^\frac{1}{2}Y^\frac{3}{2} 
        &\le 
        \frac{P^\frac{1}{2}X^\frac{1}{2}Y^\frac{3}{2}}{X^\frac{1}{2}} + 
        2\frac{P^\frac{1}{2}}{X^\frac{1}{2}} 
        \cdot
        X^\frac{1}{2}Y^\frac{3}{2}
        \le 3 P^\frac{1}{2}Y^\frac{3}{2}, \\
        \text{and } \ 
        XY\sqrt{4y-r^2} &\ge XY\sqrt{4 \frac PX - 4 \frac{P}{X+Y}} \\
        &= XY\sqrt{4 \frac{P}{X}\lrp{1- \frac{X}{X+Y}} } \ge XY\sqrt{4 \frac{P}{X} \cdot \frac14 \frac{Y}{X} } = P^\frac{1}{2} Y^\frac{3}{2}.
    \end{align}
We cite \cite[Proposition 3.6]{Zubrilina} here for reference.

\begin{unnumberedlemma}[{\cite[Proposition 3.6]{Zubrilina}}]
    \label{lem:bound-avg-class-number}
    Let $P > 2$ be prime and let $[X, X+Y]$ be an interval of length $Y = o(X)$. Assume further that $r^2(X+Y)<4P$. Then
    \begin{align}
        \sideset{}{^\square} \sum_{\substack{N \in [X, X+Y] \\ P \nmid N}} H_1(r^2N^2-4PN) 
        &= YB_0\nu(r) \sqrt{4PX-r^2X^2} \\[-15pt]
        &\hspace{-20mm} + O_\varepsilon \lrp{\bigg[ P^\frac{3}{5}X^\frac{3}{5}Y^\frac{3}{5} + \frac{P^\frac{1}{2}Y^2}{X^\frac{1}{2}} + rX^\frac{1}{2}Y^\frac{3}{2} + P^\frac{1}{2}XY^\frac{5}{18} + P^\frac{1}{2}Y^\frac{8}{9}X^\frac{1}{2} \bigg] (PX)^\varepsilon}.
    \end{align}
\end{unnumberedlemma}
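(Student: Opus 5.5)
We prove the asymptotic directly from the class number formula, averaging over $N$ via local densities. Put $D_N := r^2N^2-4PN = -N(4P-r^2N)<0$. The hypothesis $r^2(X+Y)<4P$ makes $|D_N|>0$ throughout the range.

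\textbf{Step 1: expand $H$ and truncate.} Write the Hurwitz class number via Dirichlet's formula:
\begin{align}
H(|D_N|)=\frac{\sqrt{|D_N|}}{\pi}\sum_{\substack{f^2\mid D_N\\ D_N/f^2\equiv 0,1 \ (4)}}\frac{1}{f}\,L\lrp{1,\chi_{D_N/f^2}},
\end{align}
up to the usual weights at $D\in\{-3,-4\}$, which contribute negligibly. Truncate the $f$-sum at $f\le F$. The tail $f>F$ is bounded trivially: use $L(1,\chi)\ll \log$ and count the $N$ in $[X,X+Y]$ with $f^2\mid N(4P-r^2N)$. Since $N$ is squarefree and $P\nmid N$, the condition essentially becomes congruence conditions on $N$ modulo $f^2$ and $f$.

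Next replace each $L(1,\chi_{D_N/f^2})$ by $\sum_{m\le M}\chi_{D_N/f^2}(m)/m$. The tail beyond $M$ is controlled by partial summation against a Pólya--Vinogradov type bound. On average over $N$, a large-sieve bound for real characters does better, and this is what I would use.

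\textbf{Step 2: extract the main term from local densities.} After truncation the sum has the form
\begin{align}
\frac1\pi\sum_{f\le F}\frac1f\sum_{m\le M}\frac1m\ \sideset{}{^\square}\sum_{N}\sqrt{|D_N|}\,\mathbf 1_{f^2\mid D_N}\,\chi_{D_N/f^2}(m).
\end{align}

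\emph{Removing the square root.} Replace $\sqrt{|D_N|}$ by $\sqrt{4PX-r^2X^2}$. The derivative bound $\partial_N\sqrt{4PN-r^2N^2}\ll (P+r^2X)/\sqrt{4PX-r^2X^2}$ over an interval of length $Y$ produces exactly the errors $P^{1/2}Y^2X^{-1/2}+rX^{1/2}Y^{3/2}$. To obtain these with the stated shape, I would treat the regime where $\sqrt{4PX-r^2X^2}$ is small via the trivial bound $H(n)\ll n^{1/2+\varepsilon}$.

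\emph{Summing over $N$.} What remains is a count of squarefree $N$ in a short interval, weighted by a function of $N$ modulo $4f^2m$. Detect squarefreeness with $\sum_{d^2\mid N}\mu(d)$, truncated at $d\le Z$; the tail $d>Z$ is bounded trivially. Then split each periodic weight into its mean plus a mean-zero part. The means multiply out over primes into the Euler product. At odd $p\nmid r$ one obtains the factor of $B_0$ (together with $1/\zeta(2)$ from squarefreeness). At $p\mid r$ the local density of $N(4P-r^2N)$ modulo powers of $p$ changes, which produces $\nu(r)$. The prime $P$ is excluded from $N$, and since $P$ is large its effect lies in the error. Completing the truncated sums over $f,m,d$ to infinite Euler products costs only tails that go into the error.

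\textbf{Step 3 (main obstacle): the oscillatory part.} We must bound the mean-zero parts, namely the sums
\begin{align}
\sum_{\substack{N\in[X,X+Y]\\ d^2\mid N}}\lrp{\frac{-N(4P-r^2N)/f^2}{m}}.
\end{align}
These are incomplete sums of a Jacobi symbol of a quadratic polynomial in $N$. I would complete them modulo $m$ and apply the Weil bound to the complete sums $\sum_x \bigl(\frac{x(4P-r^2x)}{p}\bigr)e(hx/p)$, which gives $\ll m^{1/2+\varepsilon}$ per residue class. Care is needed for primes $p\mid \gcd(m,r)$ or $p\mid\gcd(m,4P)$, where the polynomial degenerates and the character sum does not cancel; these contribute to the main term rather than the error.

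Finally, collect all errors in terms of the truncation parameters $F,M,Z$ and optimize. I expect the balance between the large-sieve tail of $L(1,\chi)$ and the Weil-bound term to give $P^{3/5}X^{3/5}Y^{3/5}$, and the squarefree-sieve and $f$-tail terms, after optimization, to give $P^{1/2}XY^{5/18}$ and $P^{1/2}X^{1/2}Y^{8/9}$. This parameter bookkeeping is routine once the character-sum estimate in Step 3 is in place; Step 3 is where the real work lies.
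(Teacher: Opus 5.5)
Your architecture (class number formula, truncation of the square-divisor sum and of the Dirichlet series, M\"obius detection of squarefreeness, local densities assembling into $B_0\nu(r)$) is the same as in the cited proof of \cite[Proposition 3.6]{Zubrilina}. That proof organizes the averaged local data as triples $(m,d,g)\in\mathcal{T}_r$ with weights $\Theta_r$, truncated at $mg\le Z'$, $d\le Z$. The paper does not reprove the lemma; its only change concerns the Euler factor at $P$. Your local densities are right: for odd $p\nmid rP$ the conditional factor is $\frac{p^4-2p^2-p+1}{(p^2-1)^2}$, and $p\mid r$ multiplies it by $1+\frac{p^2}{p^4-2p^2-p+1}$.

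The gap is quantitative. You never derive the five error terms, and the one mechanism you commit to cannot produce them. The characters $\chi_{D_N/f^2}$ form a family of only $Y$ members, all of conductor $\asymp PX$. So Heath-Brown's large sieve gives only $\sum_N\bigl|\sum_{m\sim M'}\chi_{D_N/f^2}(m)/m\bigr|^2\ll(PX+M')^{1+\varepsilon}/M'$. After Cauchy--Schwarz this is an $L(1,\chi)$-tail contribution $\ll PX\,(Y/M)^{1/2}(PX)^{\varepsilon}$, which is worse than the pointwise P\'olya--Vinogradov bound $PXY/M$ whenever $M>Y$, the relevant range. Your Weil/sieve error is $\sqrt{PX}\,(ZM^{1/2}+Y/Z)$, which at $Z=Y^{1/2}M^{-1/4}$ becomes $\sqrt{PX}\,Y^{1/2}M^{1/4}$. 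Balancing the large-sieve tail against it yields $(PX)^{2/3}Y^{1/2}$, not $(PXY)^{3/5}$. For $P\asymp X$, $Y=X^{4/5}$ this is $X^{26/15}$, which exceeds every stated term (the largest is $P^{1/2}XY^{5/18}=X^{31/18}$).

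The repair is to keep P\'olya--Vinogradov. With $M=(PXY)^{2/5}$ and $Z=Y^{1/2}M^{-1/4}$ the balance is exactly $(PXY)^{3/5}$. The remaining errors all lie within the stated bound:
\begin{itemize}
\item $\sqrt{PX}\,X^{1/2}$ from the $N$ with $d^2\mid N$, $d>Z$;
\item $Y\sqrt{PX}\,M^{-1/2}$ from completing the main-term $m$-sum;
\item a negligible $f$-tail;
\item the square-root replacement, which gives $P^{1/2}Y^2X^{-1/2}+rX^{1/2}Y^{3/2}$ (most simply via $\sqrt N\sqrt{4P-r^2N}$ and $|\sqrt a-\sqrt b|\le\sqrt{|a-b|}$).
\end{itemize}
If $Y^4<PX$ the lemma is trivial, since then $(PXY)^{3/5}\ge Y\sqrt{PX}$. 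So you need not reproduce $P^{1/2}XY^{5/18}$ and $P^{1/2}X^{1/2}Y^{8/9}$. Those come from the cited proof's particular truncation (main-term tail $Z^{-2}+(Z')^{-1/5}$), and your attribution of them to the sieve and $f$-tails is unsupported.

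On $P$: your ``since $P$ is large its effect lies in the error'' is the content of the paper's remark. There, the Euler product in \cite[Lemma 3.10]{Zubrilina} contains a factor at $P$ that \cite[Lemma 3.8]{Zubrilina} does not cover. The fix sums over $\mathcal{T}_r^{(P)}$ to get $B^{(P)}\nu(r)$, uses $Z,Z'<P$, and bounds $|B-B^{(P)}|\nu(r)\ll P^{-3}$. In your version $M$ can exceed $P$, and then $P$ is genuinely degenerate: $D_N\equiv r^2N^2\pmod P$, so $\chi_{D_N/f^2}(P)=1$ for every $N$. The local factor at $P$ still differs from the generic one only by a relative $O(P^{-1})$, costing $O(Y\sqrt{X/P})$, but this must be said. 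Conversely, odd $p\mid r$ are not degenerate for the twisted sums: the polynomial is linear mod $p$, giving Gauss sums. Only the mean changes there, which is what produces $\nu(r)$.
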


\begin{remark}
    Some modifications are needed in the proof of
    \cite[Proposition 3.6]{Zubrilina}. Specifically, the Euler-product
    evaluation in the second displayed equation of
    \cite[Lemma 3.10]{Zubrilina} includes a local factor at $P$, whereas
    \cite[Lemma 3.8]{Zubrilina}, which is used to compute these local
    factors, only applies to parameters coprime to $P$.
    The result needed later in the paper can be recovered by the
    following modifications to \cite[Lemma 3.10]{Zubrilina}.

    Let $B^{(P)}$ denote the value $B$ (from the second displayed equation of \cite[Lemma 3.10]{Zubrilina}) with the Euler factor at $P$ removed: $B^{(P)} := B / \lrp{\frac{P^4-2P^2-P+1}{(P^2-1)^2}}$. Additionally let $\mathcal{T}_r^{(P)}$ denote the elements of $\mathcal{T}_r$ away from $P$: $\mathcal{T}_r^{(P)} := \{(m,d,g) \in \mathcal{T}_r : P \nmid mdg\}$.

    First, one can modify the statement of \cite[Lemma 3.10]{Zubrilina}  to claim the identity
    \begin{align}
        \label{eqn:temp-theta-sum-over-TP-equals-BP}
        \sum_{\substack{(m,d,g) \in \mathcal{T}_r^{(P)}}} \Theta_r(m,d,g) = B^{(P)} \nu(r),
    \end{align}
    instead of the unrestricted claim that
    \begin{align}
        \label{eqn:temp-theta-sum-over-T-equals-B}
        \sum_{\substack{(m,d,g) \in \mathcal{T}_r}} \Theta_r(m,d,g) = B \nu(r).
    \end{align}
    One should also add the conditions that $P\nmid r$ and $Z,Z'<P$. (Note that for our later choices $Z=Y^\tau$ and $Z'=Y^\sigma$ in the proof of \cite[Proposition 3.12]{Zubrilina},
    these two conditions will be satisfied 
    since $r^2(X+Y)<4P$ implies that $r < P$ and $Z,Z' \le Y \le \frac{1}{4} X < P$ for large enough $X$.)

    Second, one can modify the proof of \cite[Lemma 3.10]{Zubrilina} to show \eqref{eqn:temp-theta-sum-over-TP-equals-BP} instead of \eqref{eqn:temp-theta-sum-over-T-equals-B}. By the same tail estimates as in \cite[Lemma 3.10]{Zubrilina}, this will then imply that
        \begin{align}
            &\sum_{\substack{(m,d,g) \in \mathcal{T}_r^{(P)} \\
            mg \le Z'\!,\, d \le Z}} \Theta_r(m,d,g)  - B^{(P)} \nu(r) \ll Z^{-2} + (Z')^{\frac{-1}{5}} \label{eqn:temp-modified-tail-estimate-Zub-Lemma-3.10} \\
            &\text{(c.f. the third displayed equation of \cite[Lemma 3.10]{Zubrilina})}.
        \end{align}
        
        Third, since $Z,Z' < P$, note that we have the two facts
        \begin{align}
            \sum_{\substack{(m,d,g) \in \mathcal{T}_r \\
            mg \le Z'\!,\, d \le Z}} \Theta_r(m,d,g) 
            &= \sum_{\substack{(m,d,g) \in \mathcal{T}_r^{(P)} \\
            mg \le Z'\!,\, d \le Z}} \Theta_r(m,d,g), \\
            \text{and} \qquad 
            \lrabs{B \nu(r) - B^{(P)}\nu(r)} &\ll P^{-3} \ll Z^{-3} \ll Z^{-2}.
        \end{align}
        Applying these two facts to \eqref{eqn:temp-modified-tail-estimate-Zub-Lemma-3.10} yields
        \begin{align}
            &\sum_{\substack{(m,d,g) \in \mathcal{T}_r \\
            mg \le Z'\!,\, d \le Z}} \Theta_r(m,d,g)  - B \nu(r) \ll Z^{-2} + (Z')^{\frac{-1}{5}},
        \end{align}
        which verifies the desired result.
\end{remark}

In the proof of Proposition \ref{prop:main-proposition}, we also used the following two lemmas.
\begin{lemma}\label{lem:Uk-split-main-error}
    Let $P > 2$ be prime and let $[X, X+Y]$ be an interval of length $Y = o(X)$. Then
    \begin{align}
        & \sum_{1\leq r < 2\sqrt{\frac{P}{X+Y}}} 
        U_{k-2}\lrp{\frac{r\sqrt{X}}{2\sqrt{P}}}\nu(r)\sqrt{4y-r^2} \\
        &= \sum_{1\leq r\leq 2\sqrt{y}} U_{k-2}\lrp{\frac{r}{2\sqrt{y}}}\nu(r) \sqrt{4y-r^2} + O\lrp{k\lrb{\frac{PY^{\frac{3}{2}}}{X^{\frac{5}{2}}}+\frac{P^\frac{1}{2}Y^\frac{1}{2}}{X}}} \\
    \end{align}
\end{lemma}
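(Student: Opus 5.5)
Note first that $\frac{r\sqrt{X}}{2\sqrt{P}} = \frac{r}{2\sqrt{y}}$, so the summands on both sides are identical. The two sums therefore differ only in their ranges of $r$. The difference consists exactly of the integers $r$ with $2\sqrt{\frac{P}{X+Y}} \le r \le 2\sqrt{y}$.

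The plan is to bound the number of such $r$ and the size of each summand in this range. Throughout we use $|U_{k-2}(x)| \le k-1$ for $|x|\le 1$, which holds here since $r\le 2\sqrt y$, together with $\nu(r) \ll 1$, since the product over $p$ converges uniformly.

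\textbf{Counting the $r$.} The number of integers in the interval is at most
\begin{align}
    2\sqrt{\tfrac{P}{X}}\Bigl(1-\sqrt{\tfrac{X}{X+Y}}\Bigr) + 1 \le \sqrt{\tfrac{P}{X}}\,\tfrac{Y}{X} + 1,
\end{align}
using $1-(1+t)^{-1/2}\le t/2$ with $t = Y/X$.

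\textbf{Bounding each summand.} For each such $r$,
\begin{align}
    0 \le 4y - r^2 \le 4\tfrac{P}{X} - 4\tfrac{P}{X+Y} = 4\tfrac{P}{X}\cdot\tfrac{Y}{X+Y} \le \tfrac{4PY}{X^2},
\end{align}
so $\sqrt{4y-r^2} \le 2\frac{P^{1/2}Y^{1/2}}{X}$.

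\textbf{Conclusion.} Multiplying the count by the per-term bound gives a difference of size
\begin{align}
    \ll k\Bigl(\tfrac{P^{1/2}Y}{X^{3/2}} + 1\Bigr)\tfrac{P^{1/2}Y^{1/2}}{X} = k\Bigl[\tfrac{PY^{3/2}}{X^{5/2}} + \tfrac{P^{1/2}Y^{1/2}}{X}\Bigr],
\end{align}
which is exactly the claimed error term.

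The only mildly delicate step is remembering the "$+1$" in the lattice-point count. This is what produces the $\frac{P^{1/2}Y^{1/2}}{X}$ term, and it is needed because the interval may be shorter than $1$. Everything else is a direct estimate.
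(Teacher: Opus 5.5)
Your proposal is correct and follows essentially the same route as the paper. Both identify the summands via $\frac{r\sqrt{X}}{2\sqrt{P}}=\frac{r}{2\sqrt{y}}$ and reduce to the tail $2\sqrt{\frac{P}{X+Y}}\le r\le 2\sqrt{y}$. That tail is then bounded by (range length $+1$) $\times\, k \times \frac{P^{1/2}Y^{1/2}}{X}$, with the retained $+1$ producing the $\frac{P^{1/2}Y^{1/2}}{X}$ term, as the paper's subsequent remark also stresses.
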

\begin{proof}
    First, write
    \begin{align}
        &\sum_{1\leq r<2\sqrt{\frac{P}{X+Y}}} 
        U_{k-2}\lrp{\frac{r\sqrt{X}}{2\sqrt{P}}}\nu(r)\sqrt{4y-r^2} \\
        %%%%%%%%%%%%%%%%%%%%%%%%%%%%%%%%%%%%%%%%%%%
        &=  \sum_{1\leq r\leq 2\sqrt{\frac{P}{X}}} 
        U_{k-2}\lrp{\frac{r\sqrt{X}}{2\sqrt{P}}}\nu(r)\sqrt{4y-r^2} 
        % \label{eq:small-lemma-main} 
        \ \ - \hspace{-2mm} \sum_{2\sqrt{\frac{P}{X+Y}}\leq r\leq 2\sqrt{\frac{P}{X}}} 
        U_{k-2}\lrp{\frac{r\sqrt{X}}{2\sqrt{P}}}\nu(r)\sqrt{4y-r^2}. 
        \\
        &= \sum_{1\leq r\leq 2\sqrt{y}} 
        U_{k-2}\lrp{\frac{r}{2\sqrt{y}}}\nu(r)\sqrt{4y-r^2} 
        \ \ - \hspace{-2mm}
        \sum_{2\sqrt{\frac{P}{X+Y}}\leq r\leq 2\sqrt{\frac{P}{X}}} 
        U_{k-2}\lrp{\frac{r\sqrt{X}}{2\sqrt{P}}}\nu(r)\sqrt{4y-r^2}.
    \end{align}
    Observe that the first summation is the desired main term of the lemma. Moreover, the second summation is bounded by
    \begin{align} 
        &\sum_{2\sqrt{\frac{P}{X+Y}}\leq r \le 2\sqrt{\frac{P}{X}}} U_{k-2}\lrp{\frac{r\sqrt{X}}{2\sqrt{P}}}\nu(r)\sqrt{4y-r^2} \\
        &\ll \lrp{2\sqrt{\frac{P}{X}}-2\sqrt{\frac{P}{X+Y}}+1}
        \cdot k \cdot
        \frac{P^\frac{1}{2}Y^\frac{1}{2}}{X} 
        \label{eqn:temp_+1_for_range_length}
        \\
        &\quad\    \text{\Big(since $U_{k-2}(x)\ll k,  \nu(r)\ll 1$, and $4y-r^2 \le 4 \tfrac{P}{X} - 4 \tfrac{P}{X+Y} = \tfrac{4P}{X} \lrp{1- \tfrac{X}{X+Y}} \le \tfrac{4PY}{X^2}$ \Big)} \\
        &\ll k\lrp{\frac{PY^{\frac{3}{2}}}{X^{\frac{5}{2}}}+\frac{P^\frac{1}{2}Y^\frac{1}{2}}{X}},
        \\
        &\qquad \text{ \Big(since $\sqrt{\tfrac{P}{X}}-\sqrt{\tfrac{P}{X+Y}} = \sqrt{\tfrac{P}{X}} \Big(1 - \sqrt{\tfrac{X}{X+Y}}\Big)\leq \sqrt{\tfrac{P}{X}} \cdot \tfrac{1}{2} \tfrac{Y}{X} = \tfrac 12 \tfrac{ P^\frac{1}{2} Y}{X^\frac{3}{2}}$\Big)}
        % k\frac{PY}{X^2}+k\frac{P^{\frac{1}{2}}}{X^{\frac{1}{2}}}, \qquad  \\
    \end{align}
    yielding the desired result.
\end{proof}

\begin{remark}
    If one compares the claims of \cite[Section 3.3.7]{Zubrilina} (and also \cite[Proposition 3.2]{Zubrilina}) with Lemma \ref{lem:Uk-split-main-error}, one might notice that we have included an extra error term. Specifically, the second displayed equation of \cite[Section 3.3.7]{Zubrilina} does not include the error term corresponding to $O\lrp{k\frac{P^\frac{1}{2} Y^\frac{1}{2}}{X}}$ from Lemma \ref{lem:Uk-split-main-error}. It seems that this omission comes from not including a ``$+1$" corresponding to the one in \eqref{eqn:temp_+1_for_range_length}. 
    In fact, this ``$+1$" needs to be retained because it will actually be the dominant term of \eqref{eqn:temp_+1_for_range_length} for the values of $P,X,Y$ we will choose shortly. The same remark also applies to the third displayed equation of \cite[Section 3.3.7]{Zubrilina}.
\end{remark}

\begin{lemma} \label{lem:E(r,N)-bound}
    Let $E(r,N)$ be defined as in \eqref{eqn:def-E(r,N)} and let $y = \frac{P}{X}$. Then
    \begin{align}
        |E(r,N)| \le \frac{12k}{4y-r^2} \frac{PY}{X^2}
    \end{align}
\end{lemma}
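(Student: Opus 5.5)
The plan is to combine the mean value theorem for $U_{k-2}$ with the trivial bound $|U_{k-2}(x)|\le k-1$ on $[-1,1]$, splitting into two cases according to how close $r$ is to the edge of its range. Throughout, $r<2\sqrt{P/(X+Y)}$ (the only range in which the lemma is applied) and $N\in[X,X+Y]$. Set
\[
a=\frac{r\sqrt N}{2\sqrt P},\qquad b=\frac{r\sqrt X}{2\sqrt P},
\]
so that $0<b\le a<1$, $E(r,N)=U_{k-2}(a)-U_{k-2}(b)$, and $1-b^2=\frac{4y-r^2}{4y}$. Since $a$ and $b$ lie in $[-1,1]$, the trivial bound gives $|E(r,N)|\le 2(k-1)$ in all cases.

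The analytic input is a derivative bound. From $U_n(\cos\theta)=\sin((n+1)\theta)/\sin\theta$ one gets the identity
\[
(x^2-1)U_n'(x)=(n+1)T_{n+1}(x)-xU_n(x).
\]
Using $|T_{n+1}|\le 1$ and $|xU_n(x)|\le n+1$ on $[-1,1]$, this gives $|U_n'(x)|\le \frac{2(n+1)}{1-x^2}$ for $|x|<1$. We also need the size of $a-b$:
\[
a-b=\frac{r}{2\sqrt P}\bigl(\sqrt N-\sqrt X\bigr)\le \frac{r}{2\sqrt P}\cdot\frac{Y}{2\sqrt X}\le \frac{Y}{2X},
\]
where the last step uses $r\le 2\sqrt{P/X}$.

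\emph{Case 1:} $4P-r^2N\ge \tfrac12(4P-r^2X)$. Every $x\in[b,a]$ satisfies $1-x^2\ge 1-a^2=\frac{4P-r^2N}{4P}$. By the case assumption this is at least $\frac{4P-r^2X}{8P}=\frac{4y-r^2}{8y}$. The mean value theorem then gives
\[
|E(r,N)|\le \frac{2(k-1)\cdot 8y}{4y-r^2}\cdot\frac{Y}{2X}=\frac{8(k-1)}{4y-r^2}\,\frac{PY}{X^2},
\]
which is within the claimed bound.

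\emph{Case 2:} $4P-r^2N<\tfrac12(4P-r^2X)$. Rearranging gives $4P-r^2X<2r^2(N-X)\le 2r^2Y$. Dividing by $X$ and using $r^2\le 4y$ yields $4y-r^2<8yY/X=8PY/X^2$. Hence
\[
\frac{12k}{4y-r^2}\frac{PY}{X^2}>\frac{12k}{8}\ge 2(k-1)\ge |E(r,N)|,
\]
so the trivial bound already suffices.

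The main obstacle is that the mean value theorem alone fails when $r^2N$ is much closer to $4P$ than $r^2X$ is. In that situation $1-a^2$ cannot be compared with $4y-r^2$. The case split resolves this: exactly in that regime, $4y-r^2$ is itself of size $O(PY/X^2)$, so the right-hand side of the lemma is at least of order $k$ and the trivial bound wins. The remaining steps are routine, namely verifying the Chebyshev derivative identity and checking the constants.
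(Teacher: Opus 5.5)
Your approach is the paper's approach: the mean value theorem with a derivative bound of size $k/(1-x^2)$, combined with the trivial bound $|E(r,N)|\le 2(k-1)$ in the regime where $\frac{r\sqrt N}{2\sqrt P}$ is too close to $1$. The paper packages this as Lemma \ref{lem:lipschitz-const-at-x0-for-U}, splitting on whether $|x-x_0|$ exceeds $\frac23(1-|x_0|)$, and then uses $\frac{1}{1-|x_0|}\le\frac{2}{1-x_0^2}$. The following parts of your proof are correct: the derivative identity $(x^2-1)U_n'=(n+1)T_{n+1}-xU_n$, the resulting bound $|U_{k-2}'(x)|\le\frac{2(k-1)}{1-x^2}$, the estimate $a-b\le\frac{Y}{2X}$, and all of Case 1.

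\textbf{The gap.} The final inequality of Case 2 is false in general. The claim $\frac{12k}{8}=\frac{3k}{2}\ge 2(k-1)$ holds only for $k\le 4$; already for $k=6$ it reads $9\ge 10$. As written, your two cases only prove
\[
|E(r,N)|\le \frac{16(k-1)}{4y-r^2}\,\frac{PY}{X^2}.
\]
This is weaker than the stated constant $12k$ for every even $k\ge 6$. For the application in Proposition \ref{prop:main-proposition} an $O(k)$ constant would suffice, but the lemma explicitly asserts $12k$.

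\textbf{The fix.} The threshold $\frac12$ is unbalanced: Case 1 has slack, giving $8(k-1)$, while Case 2 has none. Split at $\frac13$ instead.
\begin{enumerate}
\item If $4P-r^2N\ge\frac13(4P-r^2X)$, then $1-\xi^2\ge 1-a^2\ge\frac{4y-r^2}{12y}$ for all $\xi\in[b,a]$. The mean value theorem then gives $|E(r,N)|\le\frac{12(k-1)}{4y-r^2}\frac{PY}{X^2}$.
\item Otherwise, $\frac23(4P-r^2X)<r^2(N-X)\le r^2Y$. Dividing by $X$ and using $r^2\le 4y$ gives $4y-r^2<\frac{6PY}{X^2}$. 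Hence the right-hand side of the lemma exceeds $\frac{12k}{6}=2k\ge 2(k-1)\ge|E(r,N)|$.
\end{enumerate}
With this change your argument is complete. It mirrors the $\frac13$/$\frac23$ balance that produces the constant $3K$ in the paper's Lemma \ref{lem:lipschitz-const-at-x0-for-U}.
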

\begin{proof}
    By Lemma \ref{lem:lipschitz-const-at-x0-for-U}, we have that
    \begin{align}
        |E(r,N)| 
        &=
        \lrabs{
        U_{k-2}\lrp{\frac{r \sqrt{N}}{2\sqrt{P}}}-
        U_{k-2}\lrp{\frac{r\sqrt{X}}{2\sqrt{P}}}
        }
        \\
        &\le \frac{3(k-1)}{1-\lrabs{\frac{r \sqrt X}{2 \sqrt P}}} \cdot \lrb{\frac{r\sqrt N}{2 \sqrt P} - \frac{r\sqrt X}{2 \sqrt P}} \\
        &\le \frac{3k\lrp{1+\lrabs{\frac{r}{2\sqrt y}}}}{1-\lrp{\frac{r}{2 \sqrt y}}^2} \cdot \lrb{\frac{r\sqrt{X+Y}}{2 \sqrt P} - \frac{r\sqrt X}{2 \sqrt P}} \\
        &\le \frac{3k \cdot 2 \cdot 4y}{4y-r^2} \cdot \frac{r}{2 \sqrt P} \lrb{\sqrt{X+Y} - \sqrt{X}} \\
        &\le \frac{24ky}{4y-r^2} \cdot \frac{r}{2 \sqrt P} \lrb{\sqrt{X} \cdot \frac{1}{2} \frac{Y}{X}} \\
        &= \frac{24k \frac PX}{4y-r^2} \cdot \frac{r}{2 \sqrt y} \lrb{\frac{1}{2} \frac{Y}{X}} \\
        &\le \frac{12k}{4y-r^2} \frac{PY}{X^2},
    \end{align}
    as desired.
\end{proof}

In the proof of Lemma \ref{lem:E(r,N)-bound}, we used the following fact.
\begin{lemma} \label{lem:lipschitz-const-at-x0-for-U}
    Fix a natural number $K \ge 1$, and let $x_0 \in (-1,1)$. Then for all $x \in [-1,1]$,
    \begin{align}
        \label{eqn:lipschitz-const-goal}
        |U_{K-1}(x) - U_{K-1}(x_0)| \le \frac{3K}{1-|x_0|} |x-x_0|.
    \end{align}
\end{lemma}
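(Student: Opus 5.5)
We will prove the inequality by splitting into a ``far'' case and a ``near'' case. In the far case we use a uniform bound on $U_{K-1}$. In the near case we use a pointwise bound on the derivative of $U_{K-1}$, which blows up only like $(1-|t|)^{-1}$ near $\pm1$. Throughout we use the parametrization $t=\cos\theta$ with $\theta\in[0,\pi]$, so that $U_{K-1}(t)=\sin(K\theta)/\sin\theta$. We also use the elementary inequality $|\sin K\theta|\le K|\sin\theta|$, proved by induction on $K$ from $\sin((K+1)\theta)=\sin K\theta\cos\theta+\cos K\theta\sin\theta$. This gives $|U_{K-1}(t)|\le K$ on $[-1,1]$.

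\emph{Step 1 (derivative bound).} Since $dt=-\sin\theta\,d\theta$, we have
\begin{align}
U_{K-1}'(t) = -\frac{K\cos(K\theta)\sin\theta-\sin(K\theta)\cos\theta}{\sin^3\theta}.
\end{align}
Bounding the numerator by $K\sin\theta+K\sin\theta$ gives $|U_{K-1}'(t)|\le \frac{2K}{1-t^2}\le\frac{2K}{1-|t|}$ for $t\in(-1,1)$.

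\emph{Step 2 (far case).} Put $u:=|x-x_0|/(1-|x_0|)$. If $u\ge \tfrac23$, then
\begin{align}
|U_{K-1}(x)-U_{K-1}(x_0)|\le 2K\le 3Ku=\frac{3K}{1-|x_0|}|x-x_0|.
\end{align}

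\emph{Step 3 (near case).} If $u<\tfrac23$, every $t$ between $x_0$ and $x$ satisfies $1-|t|\ge (1-|x_0|)(1-u)>0$, so the segment avoids $\pm1$. By the mean value theorem, or more sharply by integrating the Step 1 bound, the difference is at most $\int_{x_0}^{x}\frac{2K}{1-|t|}\,dt$. In the worst case the segment moves monotonically toward the boundary, and this integral equals $2K\log\frac{1}{1-u}$. So it suffices to have $2\log\frac{1}{1-u}\le 3u$ on the near range.

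\emph{Main obstacle: the constants.} The inequality $2\log\frac{1}{1-u}\le 3u$ holds for $u\le\tfrac12$ (at $u=\tfrac12$ the left side is about $1.386$), but fails just below $u=\tfrac23$. Meanwhile the crude far bound $2K$ only works for $u\ge\tfrac23$. To close the window $\tfrac12\le u<\tfrac23$ I would first sharpen the derivative bound. I would keep the factor $1+|t|$ in $1-t^2=(1-|t|)(1+|t|)$, and also exploit that the numerator is really $O(K\sin\theta)$ with a cancellation when $\theta$ is small. I would also combine the near and far estimates: integrate the derivative bound only up to the point where $u=\tfrac12$, and then use $|U_{K-1}|\le K$ together with the monotonic growth of the integrand. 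If the constant $3$ still does not come out cleanly, I would fall back on the bound $|U_{K-1}'(t)|\le \frac{K}{1-t^2}\bigl(1+\frac{|\sin K\theta|}{K\sin\theta}\bigr)$. I would then split according to whether $K\sin\theta\lesssim 1$, in which case Bernstein's bound $|U_{K-1}'|\le K^3/3$ is available. Only the constant is delicate; any fixed constant in place of $3$ would suffice for Lemma \ref{lem:E(r,N)-bound} up to adjusting the $12k$ there.
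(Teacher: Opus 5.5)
Your proposal does not prove the lemma as stated, because the near case is never closed. The loss occurs in Step 1, where you bound $|\sin(K\theta)\cos\theta|$ by $K\sin\theta$ and so discard the factor $|\cos\theta|=|t|$. This costs up to a factor of $2$. With $|U_{K-1}'(t)|\le\frac{2K}{1-|t|}$, the mean value theorem only yields the constant $6$. Your integrated version needs $2\log\frac{1}{1-u}\le 3u$, which, as you note yourself, fails on roughly $0.58<u<\frac{2}{3}$. What follows is a list of possible repairs (sharpening the derivative bound, Bernstein's inequality, splitting according to $K\sin\theta$), none of which is carried out. Your remark that any constant would do downstream does not establish the inequality with the constant $3$.

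The missing step is to keep $|t|$ in the numerator. Then
\[
|K\cos(K\theta)\sin\theta-\sin(K\theta)\cos\theta|\le K\sin\theta\,(1+|t|),
\]
and since $\sin^3\theta=(1-t^2)\sin\theta$, the factor $1+|t|$ cancels exactly:
\[
|U_{K-1}'(t)|\le\frac{K(1+|t|)}{1-t^2}=\frac{K}{1-|t|}.
\]
Retaining $1+|t|$ only in the denominator, as you suggest, gives $\frac{2K}{1-t^2}$. That is still worse by the factor $\frac{2}{1+|t|}$, which equals $2$ at $t=0$.

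With the sharp bound no integration is needed. In your near case, every $t$ between $x_0$ and $x$ satisfies $1-|t|\ge(1-|x_0|)(1-u)>\frac{1}{3}(1-|x_0|)$, so the mean value theorem gives $|U_{K-1}(x)-U_{K-1}(x_0)|\le\frac{3K}{1-|x_0|}|x-x_0|$ directly. The threshold $u=\frac{2}{3}$ is exactly where this bound and your far-case bound $2K\le 3Ku$ meet. This is the paper's argument, and your far case and your estimate for $1-|t|$ already coincide with it.

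Integrating $\frac{2K}{1-t^2}$ exactly, instead of weakening it to $\frac{2K}{1-|t|}$, would also close the gap. The worst case is bounded by $K\log\frac{1+u}{1-u}$, and this is at most $3Ku$ on $[0,\frac{2}{3}]$. However, that route needs a separate convexity argument when the segment from $x_0$ to $x$ crosses $0$, and you did not carry it out.
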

\begin{proof}
We divide into two cases.

\textbf{Case 1: $|x-x_0| > \frac{2}{3} (1-|x_0|)$. \\}
Recall the global bound for Chebyshev polynomials
\begin{align}
    -K \le U_{K-1}(x) \le K.
\end{align}
This bound then implies that
\begin{align}
    |U_{K-1}(x)-U_{K-1}(x_0)| \le 2K = \frac{3K}{1-|x_0|} \cdot \frac{2}{3} (1-|x_0|) \le \frac{3K}{1-|x_0|} |x-x_0|,
\end{align}
verifying \eqref{eqn:lipschitz-const-goal}.

\textbf{Case 2: $|x-x_0| \le \frac{2}{3} (1-|x_0|)$. \\}
Observe that for every $t$ between $x_0$ and $x$, 
\begin{align}
    1-|t|
    &\ge
    1-|x_0|-|t-x_0| \\
    &\ge
    1-|x_0|-\frac{2}{3}(1-|x_0|) \\
    &=
    \frac{1}{3}(1-|x_0|).
    \label{eq:xi-away-from-endpoint}
\end{align}

Additionally, we have the derivative bound
\begin{align}
    |U_{K-1}'(t)|
    &=
    \lrabs{
        \frac{1}{-\sin \theta}
        \cdot
        \frac{d}{d\theta}
        \lrb{
            \frac{\sin(K\theta)}{\sin \theta}
        }
    } \qquad \qquad\quad
    \text{(where $t=\cos \theta$)}
    \\
    &=
    \lrabs{
    \frac{
        K\cos(K\theta)\sin\theta
        -
        \sin(K\theta)\cos\theta
    }{\sin^3\theta}
    } \\
    &\le
    \frac{
        K\sin\theta
        +
        K\sin\theta\,|\cos\theta|
    }{\sin^3\theta}
    \qquad\qquad\quad
    \text{(since $|\sin(K\theta)|\le K\sin\theta$)}
    \\
    &=
    \frac{K(1+|t|)}{1-t^2} \\
    &=
    \frac{K}{1-|t|}.
    \label{eq:U-derivative-bound}
\end{align}

Hence by the Mean Value Theorem, we have that
\begin{align}
    |U_{K-1}(x)-U_{K-1}(x_0)|
    &=
    |U_{K-1}'(t)|\,|x-x_0| 
    \qquad \text{(for some $t$ between $x_0$ and $x$)} 
    \\
    &\le
    \frac{K}{1-|t|}|x-x_0| 
    \qquad\quad\ \text{(by \eqref{eq:U-derivative-bound})}
    \\
    &\le
    \frac{3K}{1-|x_0|}|x-x_0|, 
    \qquad\ \,  \text{(by \eqref{eq:xi-away-from-endpoint})}
\end{align}
verifying \eqref{eqn:lipschitz-const-goal}.
\end{proof}

%%%%%%%%%%%%%%%%%%%%%%%%%%%%%%%%%%%%%%%%%%%%%%%%%%%%%%
%%%%%% PROVING THEOREM ONE FROM MODIFIED STUFF %%%%%%%
%%%%%%%%%%%%%%%%%%%%%%%%%%%%%%%%%%%%%%%%%%%%%%%%%%%%%%

\section{Proof of Theorem \ref{thm:main-result}}
\label{sec:proof-of-main-thm}

We start by proving Proposition \ref{prop:pre-main-result} (c.f. \cite[Equation (20)]{Zubrilina}), which will then immediately imply Theorem \ref{thm:main-result}.
For the proof of Proposition \ref{prop:pre-main-result}, we will utilize Proposition \ref{prop:main-proposition}, the Skoruppa-Zagier trace formula (Lemma \ref{lem:skoruppa-zagier-trace-formula}), and a class number estimate (Lemma \ref{lem:zubrilina-prop-3.1}) shown in \cite{Zubrilina}.

\begin{lemma}[{\cite[Section 2, equations (5) and (7)]{Skoruppa-Zagier}}] \label{lem:skoruppa-zagier-trace-formula}
For squarefree integers $N > 1$ and primes $P\nmid N$, we have that 
    \begin{align}
        &\sum_{f \in H^\new(N,k)}\sqrt{P}\lambda_f(P)\varepsilon(f) \\ 
        &= \frac{H_1(-4PN)}{2} + (-1)^{\frac{k}{2}-1}\sum_{1 \le r \le 2\sqrt{\frac{P}{N}}}U_{k-2}\lrp{\frac{r\sqrt{N}}{2\sqrt{P}}}H_1(r^2N^2-4PN) - \delta_{k=2}(P+1).
    \end{align}
\end{lemma}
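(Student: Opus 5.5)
This is the Skoruppa--Zagier trace formula, taken directly from \cite{Skoruppa-Zagier}. The plan is therefore to specialize their general formula for $\operatorname{tr}(T_P \circ W_N)$ on the newspace to our setting and check that the normalizations agree, rather than to reprove anything.

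\textbf{Step 1: reduce to a trace.} Since $N$ is squarefree, each newform $f \in H^\new(N,k)$ is an eigenfunction of the Fricke involution $W_N$ with eigenvalue determined by the root number; with the usual convention, $\varepsilon(f) = (-1)^{k/2}w_N(f)$. Because $P\nmid N$, $T_P$ and $W_N$ commute and $a_f(P) = P^{\frac{k-1}{2}}\lambda_f(P)$. Hence
\begin{align}
    \sum_{f} \sqrt{P}\lambda_f(P)\varepsilon(f) = (-1)^{k/2} P^{-\frac{k}{2}+1}\operatorname{tr}\lrp{T_P\circ W_N \mid S_k^\new(N)}.
\end{align}

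\textbf{Step 2: apply the formula.} Equations (5) and (7) of \cite{Skoruppa-Zagier} express $\operatorname{tr}(T_P W_N)$ on $S_k^\new(N)$ as a sum over $r$ with $r^2N^2 < 4PN$ of terms
\begin{align}
    p_k(rN, PN)\, H_1(r^2N^2-4PN),
\end{align}
where $p_k(t,n)$ is the usual Gegenbauer-type polynomial $(\rho^{k-1}-\bar\rho^{k-1})/(\rho-\bar\rho)$ with $\rho+\bar\rho = t$ and $\rho\bar\rho = n$. This sum comes with symmetric $\pm r$ contributions, a boundary term, and a correction $\delta_{k=2}$ from the Eisenstein part.

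Writing $\rho = \sqrt{PN}e^{i\theta}$ with $\cos\theta = \frac{r\sqrt N}{2\sqrt P}$ gives
\begin{align}
    p_k(rN,PN) = (PN)^{\frac{k-2}{2}}\, U_{k-2}\lrp{\frac{r\sqrt N}{2\sqrt P}}.
\end{align}
The $r=0$ term gives $p_k(0,PN) = (-PN)^{\frac{k-2}{2}}$ times $H_1(-4PN)$. Pairing $r$ with $-r$ (using that $U_{k-2}$ is even since $k$ is even) turns the factor $\tfrac12\sum_{r\in\mathbb Z}$ into $\tfrac12 H_1(-4PN) + \sum_{r\ge1}$.

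\textbf{Step 3: check normalizations, the main obstacle.} The hardest part is bookkeeping: verifying that the powers of $N$ in Skoruppa--Zagier's normalization of $W_N$ (which acts with a factor $N^{k/2}$ or similar) cancel the $N^{\frac{k-2}{2}}$ produced in Step 2, leaving only the factor $P^{\frac{k}{2}-1}$ to cancel against Step 1. One must also confirm that the signs combine to give $(-1)^{\frac k2 -1}$ on the $r\ge1$ sum and $+\tfrac12$ on the $r=0$ term, and that the $k=2$ correction is exactly $-(P+1)$ with no $\sigma$-type dependence on $N$.

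Finally, squarefreeness of $N$ is what makes the Möbius-type inversion from the full space to the newspace in (7) trivial. It also ensures there are no extra terms with $r^2N^2 = 4PN$, since that would require $PN$ to be a square, which is impossible. I would verify all of these constants against a small case, for example $k=2$ and $N=11$, as a sanity check.
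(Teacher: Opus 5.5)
Your proposal is correct and takes the same route as the paper, which gives no independent proof of this lemma and simply imports it from Skoruppa--Zagier's equations (5) and (7). The checks you list are the right ones, and they do come out as claimed: the prefactor $(-1)^{k/2}P^{1-\frac{k}{2}}$ from $\varepsilon(f)=(-1)^{k/2}w_N(f)$, the identity $p_k(rN,PN)=(PN)^{\frac{k-2}{2}}U_{k-2}\bigl(\frac{r\sqrt N}{2\sqrt P}\bigr)$, the sign of the $r=0$ term, the Eisenstein correction $-(P+1)$, and the absence of terms with $r^2N^2=4PN$ since $PN$ is squarefree; for instance, for $N=11$, $k=2$, $P=3$ both sides equal $-1=\tfrac{4}{2}+1-4$.
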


For the following lemma, recall that $A_0$ is the value that was defined in \eqref{eqn:def-A0}.
\begin{lemma}[{\cite[Proposition 3.1]{Zubrilina}}] \label{lem:zubrilina-prop-3.1}
    Let $P > 2$ be prime and let $[X, X+Y]$ be an interval of length $Y = o(X)$. Let $y := \frac{P}{X}$. Then as $X \to \infty$, 
    \begin{align}
        &\frac{1}{XY} \sideset{}{^\square}\sum_{\substack{N \in [X, X+Y] \\ P \nmid N}} \lrp{\frac{h(-PN)}{2} + \frac{h(-4PN)}{2}} \\
        &= A_0\sqrt{y} + O_\varepsilon\lrp{\bigg[\frac{1}{P^\frac{1}{2}X^\frac{1}{2}}+\frac{P^\frac{11}{19}}{Y^\frac{16}{19}}+\frac{P^\frac{1}{2}Y}{X^\frac{3}{2}}\bigg](PX)^\varepsilon}.
    \end{align}
\end{lemma}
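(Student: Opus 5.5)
The plan is to reduce the average of $\frac{h(-PN)}{2}+\frac{h(-4PN)}{2}$ to an average of values $L(1,\chi)$ by the class number formula, and then average the Dirichlet series over squarefree $N$. The term-by-term average produces $A_0$ through an Euler product. The oscillating part is controlled by character sums in $N$, and that is where the exponent $\tfrac{11}{19}$ should come from.

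\textbf{Step 1: class number formula.} Since $N$ is squarefree, $P\nmid N$ and $P>2$, the product $PN$ is squarefree. Hence exactly one of $-PN,-4PN$ is a fundamental discriminant, depending on $PN \bmod 4$, and the other class number is either $0$ (if we use the convention $h(D)=0$ for non-discriminants) or is related to it by the standard conductor formula $h(-4PN)=\bigl(2-\chi_{-PN}(2)\bigr)\,h(-PN)$. In all cases I would write
\begin{align}
\frac{h(-PN)}{2}+\frac{h(-4PN)}{2} = \frac{\sqrt{PN}}{\pi}\, c(PN \bmod 8)\, L(1,\chi_{D_N})
\end{align}
for an explicit local factor $c$ depending only on $PN\bmod 8$, where $D_N$ is the fundamental discriminant. (The unit factor $w$ is $2$ once $PN>4$.) Next, replace $\sqrt{PN}$ by $\sqrt{PX}$. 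The relative error is $O(Y/X)$, and the number of $N$ is $\ll Y$, so after dividing by $XY$ and using $L(1,\chi)\ll (PX)^\varepsilon$, this produces the $\frac{P^{1/2}Y}{X^{3/2}}(PX)^\varepsilon$ term.

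\textbf{Step 2: truncate the $L$-value.} Write $L(1,\chi_{D_N})=\sum_{n\le Z}\chi_{D_N}(n)/n$ plus a tail. I would bound the tail on average over $N$, using P\'olya--Vinogradov (or a large-sieve bound for real characters) after partial summation.

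\textbf{Step 3: swap sums and split by $n$.} Interchange the sums so that for each $n\le Z$ we need
\begin{align}
\sideset{}{^\square}\sum_{\substack{N\in[X,X+Y]\\ P\nmid N}} c(PN)\,\chi_{D_N}(n).
\end{align}
Detect squarefreeness by $\mu^2(N)=\sum_{a^2\mid N}\mu(a)$ and use quadratic reciprocity to view $N\mapsto \bigl(\tfrac{-PN}{n}\bigr)$ as a character in $N$ modulo roughly $8n$.
\begin{itemize}
\item When the odd part of $n$ is a perfect square, the character is principal on its support. The sum is then $Y$ times a local density, and summing $\frac{1}{n}$ times these densities, together with the factor $c$ and the $2$-adic contribution, yields an Euler product that I would verify equals $\pi A_0$. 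The $\frac{1}{\pi}$ in the class number formula then produces $A_0\sqrt{y}$ after dividing by $XY$, since $\sqrt{PX}/X=\sqrt{y}$.
\item When the odd part of $n$ is not a square, the character is non-principal. P\'olya--Vinogradov in $N$ (with $a$ small, and $a$ large handled trivially) gives savings.
\end{itemize}
Small-discriminant exceptions and the completion of the Euler product contribute the $\frac{1}{P^{1/2}X^{1/2}}$ term.

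\textbf{Main obstacle.} The main obstacle is balancing the three error sources: the truncation tail in $Z$, the non-square characters (each of size roughly $\sqrt{n}\log n$ per $n$, times the $a$-sum), and the large-$a$ squarefree sieve term. Balancing these by choosing $Z$ and the $a$-cutoff as suitable powers of $Y$ and $P$ should give the $\frac{P^{11/19}}{Y^{16/19}}$ error. If plain P\'olya--Vinogradov is too weak to reach this exponent, I would first try Heath-Brown's large sieve for quadratic characters to average over $n$.
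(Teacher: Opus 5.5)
\textbf{There is a genuine gap: the non-square $n$ step, as you propose it, does not give the stated error term.}

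Your outline matches the skeleton of \cite[Proposition 3.1]{Zubrilina}: the class number formula, truncating $L(1,\chi)$ at a point $T$ with a P\'olya--Vinogradov tail costing $\ll P/T$, square $n$ giving the Euler product, and $\sqrt N\to\sqrt X$ giving $P^{1/2}Y/X^{3/2}$. The paper does not reprove the lemma. It cites that proposition with two corrections: truncation at $T=P^{8/19}Y^{16/19}$, and an $X^{1/2}$ error (not $X^{3/5+\varepsilon}$) in the squarefree counting.

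You assert the balancing ``should give'' $P^{11/19}Y^{-16/19}$, but with P\'olya--Vinogradov it does not. Splitting $\mu^2(N)=\sum_{a^2\mid N}\mu(a)$ at $a\le A$ gives $\ll A\sqrt n\log n+Y/A+X^{1/2}$ per $n$, i.e.\ $\ll Y^{1/2}n^{1/4}\log n+X^{1/2}$. So these $n$ contribute $\bigl(\frac{P^{1/2}T^{1/4}}{X^{1/2}Y^{1/2}}+\frac{P^{1/2}}{Y}\bigr)(PX)^{\varepsilon}$. Balancing with $P/T$ gives $T=(PXY)^{2/5}$ and an error $P^{3/5}(XY)^{-2/5}$. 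This is not dominated by the stated terms: for $P=X^2$, $Y=X^{9/10}$ it equals $X^{11/25}$, whereas the stated error is $X^{2/5+O(\varepsilon)}$. (In the range of Theorem~\ref{thm:main-result} this weaker bound still happens to be $\ll X^{-\delta'+\varepsilon}$, but it does not prove the lemma as stated.)

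The missing input is Burgess's bound. Use $\mu^2(N)=\sum_{a^2\mid N}\mu(a)$ and apply Burgess with $r=2$ to each inner sum. This gives $\sum_{N\le x}\mu^2(N)\chi(N)\ll x^{1/2}q^{3/16+\varepsilon}$ for non-principal $\chi$ mod $q$; the sum over $a$ costs only a factor $\log x$, so no cutoff is needed. Apply this at $x=X+Y$ and $x=X$. The non-square $n\le T$ then contribute
$\ll\frac{P^{1/2}}{X^{1/2}Y}\sum_{n\le T}\frac{X^{1/2}n^{3/16}}{n}(PX)^\varepsilon\ll\frac{P^{1/2}T^{3/16}}{Y}(PX)^\varepsilon$.
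Equating this with $P/T$ forces $T^{19/16}=P^{1/2}Y$, i.e.\ exactly $T=P^{8/19}Y^{16/19}$. Both terms then equal $P^{11/19}Y^{-16/19}$.

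Likewise, the square $n$ need the squarefree count in $[X,X+Y]$ with error $X^{1/2}$. That contributes $P^{1/2}/Y\le P^{11/19}Y^{-16/19}$, whereas an $X^{3/5}$-type error would leave an unabsorbed $P^{1/2}X^{1/10}/Y$.

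Also, the $1/\sqrt{PX}=\sqrt{y}/P$ term should be the relative $O(1/P)$ cost of the condition $P\nmid N$ (the local factor at $P$). It is not the Euler-product completion, which only costs $\sqrt{y/T}$ and is absorbed. Your large-sieve fallback is not where these exponents come from. Whatever tool you use, you must check that the resulting error is dominated by the three stated terms for all admissible $P,X,Y$.
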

\begin{remark}
    It seems that two minor modifications are needed for the proof of \cite[Proposition 3.1]{Zubrilina}. First, it seems that the definition $T:=P^\frac{5}{12}X^\frac{-1}{12} Y^\frac{5}{6}$ just after \cite[Equation (7)]{Zubrilina} should be changed to  $T := P^\frac{8}{19} Y^\frac{16}{19}$. 
    Second, it seems that the bounding term $X^{\frac{3}{5}+\varepsilon}$ in \cite[Equation (5)]{Zubrilina} should be improved to $X^{\frac{1}{2}}$ (which is possible by \cite[Theorem 3.5]{Zubrilina}).
\end{remark}

We now prove Proposition \ref{prop:pre-main-result}.

\begin{proposition} \label{prop:pre-main-result}
Fix an even integer $k\ge 2$. Let $P$, $X$, and $Y$ be parameters going to infinity with $P$ prime; assume further that $Y=(1+o(1))X^{1-\delta_2}$ and $P\ll X^{1+\delta_1}$ for some $\delta_1,\delta_2>0$ with $0<\delta_1<\frac{1}{11}$ and $2\delta_1<\delta_2<\frac{1}{13}(4-18\delta_1)$. Let $y:= \frac{P}{X}$. Then 
\begin{align}
    &\frac{1}{XY} \sideset{}{^\square} \sum_{N \in [X, X+Y]} \sum_{f \in H^\new(N,k)} \lambda_f(P)\sqrt{P}\varepsilon(f) \\
    &= (-1)^{\frac{k}{2}-1} B_0\!\!\!\sum_{1 \le r \le 2\sqrt{y}}   U_{k-2}\lrp{\frac{r}{2\sqrt{y}}}\nu(r)\sqrt{4y-r^2} 
    + A_0\sqrt{y} - \frac{\delta_{k=2}}{\zeta(2)} y  + O_\varepsilon\lrp{kX^{-\delta'+\varepsilon}+\frac{k}{P}}
\end{align}
where 
$
    \delta' := \frac{\delta_2}{2} - \delta_1 +\min\lrcb{0,\frac{2}{9}-\frac{11\delta_2}{9}} > 0
$.
\end{proposition}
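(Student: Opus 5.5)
We sum the Skoruppa--Zagier trace formula (Lemma \ref{lem:skoruppa-zagier-trace-formula}) over squarefree $N\in[X,X+Y]$ and divide by $XY$. Each of the three resulting pieces is then handled by an earlier result: the class number term $H_1(-4PN)/2$ by Lemma \ref{lem:zubrilina-prop-3.1}, the weighted $r$-sum by Proposition \ref{prop:main-proposition}, and the term $-\delta_{k=2}(P+1)$ by counting squarefree integers.

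\textbf{Step 1: the $N$ with $P\mid N$.} Since $P\ll X^{1+\delta_1}$ and $Y=o(X)$, at most $O(1+Y/P)$ integers $N\in[X,X+Y]$ are divisible by $P$. For each such $N$ we use the trivial bound $\lvert\sum_f \sqrt P\lambda_f(P)\varepsilon(f)\rvert\ll k\sqrt P\dim S_k(N)\ll k^2 N\sqrt P$. After dividing by $XY$ these contribute $O(k\sqrt P/Y)$, possibly with an extra factor of $k$. This is $O(kX^{-\delta'})$ because $\sqrt P/Y\ll X^{(1+\delta_1)/2-1+\delta_2}$. This exponent needs checking against $\delta'$. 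If it fails, we use that for $N=PM$ the coefficient satisfies $a_f(P)=\pm P^{k/2-1}$, so $\sqrt P\lambda_f(P)=\pm1$. The contribution is then $\ll k\cdot(\#\text{such }N)\cdot X/(XY)\ll k/Y$, which is harmless.

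\textbf{Step 2: the class number term.} For $P\nmid N$ with $N$ squarefree, we have $-4PN\equiv 0\pmod 4$. The Hurwitz number $H(4PN)$ then decomposes into class numbers of the orders of discriminant $-4PN$ and, when $PN\equiv 3\pmod 4$, $-PN$. This gives $\tfrac12 H_1(-4PN)=\tfrac12 h(-4PN)+\tfrac12 h(-PN)$ in the appropriate sense (or $\tfrac12h(-4PN)$ alone otherwise), matching the normalization of Lemma \ref{lem:zubrilina-prop-3.1}. That lemma then yields $A_0\sqrt y$ with error $[P^{-1/2}X^{-1/2}+P^{11/19}Y^{-16/19}+P^{1/2}YX^{-3/2}](PX)^\varepsilon$. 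Under the hypotheses $P\ll X^{1+\delta_1}$ and $Y\asymp X^{1-\delta_2}$, each of these terms is a negative power of $X$. In particular $P^{11/19}Y^{-16/19}\ll X^{(11(1+\delta_1)-16(1-\delta_2))/19}$.

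\textbf{Step 3: the $r$-sum.} Proposition \ref{prop:main-proposition} gives the main term $(-1)^{k/2-1}B_0\sum U_{k-2}(\cdot)\nu(r)\sqrt{4y-r^2}$. Its error is $k[(P/X)^{1/2}+1]P^{1/2}Y^{1/2}X^{-1}\mathcal E$. Since $y\ll X^{\delta_1}$, this is $\ll kX^{\delta_1}(Y/X)^{1/2}\mathcal E = kX^{\delta_1-\delta_2/2}\mathcal E$. In $\mathcal E$, we compare the terms under our parameters:
\begin{align}
P^{1/10}X^{3/5}Y^{-9/10}&\ll X^{(\delta_1+9\delta_2)/10-3/10+\ldots},\\
XY^{-11/9}&\ll X^{-2/9+11\delta_2/9},\\
X^{1/2}Y^{-11/18}&\ll X^{-1/9+11\delta_2/18}.
\end{align}
The first of these is $\ll 1$ for small $\delta$'s. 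So $\mathcal E\ll X^{\max\{0,\,11\delta_2/9-2/9\}+\varepsilon}$, which is exactly why $\delta'=\delta_2/2-\delta_1+\min\{0,2/9-11\delta_2/9\}$ appears. The condition $\delta_2<\frac1{13}(4-18\delta_1)$ is what makes $\delta'>0$.

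\textbf{Step 4: the $k=2$ term.} The number of squarefree $N\in[X,X+Y]$ is $Y/\zeta(2)+O(X^{1/2})$, so $\frac{1}{XY}\sum^\square (P+1)=\frac{y}{\zeta(2)}+O(PX^{-1/2}/Y+1/X)$. The error here is $\ll X^{\delta_1+\delta_2-1/2}$, which is again a negative power of $X$ under the constraints. (The $1/P$ in the error is used for the $+1$ if convenient.)

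\textbf{Main obstacle.} The delicate part is the bookkeeping of exponents: verifying that every secondary error (Step 1, the Lemma \ref{lem:zubrilina-prop-3.1} errors, the $\mathcal E$ terms, and the squarefree count) is $\ll X^{-\delta'+\varepsilon}$ with the stated $\delta'$ under $0<\delta_1<1/11$ and $2\delta_1<\delta_2<\frac{1}{13}(4-18\delta_1)$. The rest is a direct assembly of earlier results. Equally important is confirming the identification $H_1(-4PN)/2=(h(-PN)+h(-4PN))/2$ with the conventions of \cite{Zubrilina}.
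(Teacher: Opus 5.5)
Your proposal follows the paper's proof essentially step for step: split off the $N$ with $P\mid N$, apply Lemma \ref{lem:skoruppa-zagier-trace-formula} for $P\nmid N$, handle the three resulting pieces by Lemma \ref{lem:zubrilina-prop-3.1} (using $H_1(-4PN)=h(-4PN)+h(-PN)$ for squarefree $PN$), Proposition \ref{prop:main-proposition}, and the squarefree count, and then compare every error term with $X^{-\delta'+\varepsilon}$. Two corrections are needed.

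\textbf{Step 1.} The trivial-bound route genuinely fails: the roughly $Y/P$ multiples of $P$ contribute $k/\sqrt{P}$, which is not $O(kX^{-\delta'+\varepsilon}+k/P)$ when $P$ is small. So you must use $\sqrt{P}\lambda_f(P)=\pm1$ for $P\parallel N$. That bound gives $k/Y+k/P$ rather than $k/Y$, and it is exactly where the $k/P$ in the statement comes from.

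\textbf{Step 3.} The term $P^{1/10}X^{3/5}Y^{-9/10}$ has exponent $-\frac15+\frac{\delta_1+9\delta_2}{10}$, which can be positive in the allowed parameter range. The correct justification is that this term, like $X^{1/2}Y^{-11/18}$, is always dominated by $X^{\max\{0,\,\frac{11\delta_2}{9}-\frac29\}}$.
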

\begin{proof}
By the Skoruppa-Zagier trace formula (Lemma \ref{lem:skoruppa-zagier-trace-formula}), we have that
\begin{align}
    &\frac{1}{XY} \sideset{}{^\square} \sum_{\substack{N \in [X, X+Y]}} \sum_{f \in H^\new(N,k)} \sqrt{P}\lambda_f(P)\varepsilon(f) \\
    &=
    \frac{1}{XY} \sideset{}{^\square} \sum_{\substack{N \in [X, X+Y] \\ P \mid N}} \sum_{f \in H^\new(N,k)} \sqrt{P}\lambda_f(P)\varepsilon(f) 
    \label{eqn:trace-goal-term1}
    \\
    &\quad +\frac{1}{XY} \sideset{}{^\square}\sum_{\substack{N \in [X, X+Y] \\ P \nmid N}}  \frac{H_1(-4PN)}{2} 
    \label{eqn:trace-goal-term2}
    \\
    &\quad + \frac{1}{XY} \sideset{}{^\square}\sum_{\substack{N \in [X, X+Y] \\ P \nmid N}}
    (-1)^{\frac{k}{2}-1}\!\!\! 
    \sum_{1 \le r \le 2\sqrt{\frac{P}{N}}}U_{k-2}\lrp{\frac{r\sqrt{N}}{2\sqrt{P}}}H_1\lrp{r^2N^2-4PN} 
    \label{eqn:trace-goal-term3}
    \\
    &\quad + \frac{1}{XY} \sideset{}{^\square}\sum_{\substack{N \in [X, X+Y] \\ P \nmid N}} -\delta_{k=2}(P+1). 
    \label{eqn:trace-goal-term4}
\end{align}
We deal with the four summations \eqref{eqn:trace-goal-term1}, \eqref{eqn:trace-goal-term2}, \eqref{eqn:trace-goal-term3}, and \eqref{eqn:trace-goal-term4} separately. 

First, we have that
\begin{align}
    |\eqref{eqn:trace-goal-term1}| 
    &\le 
    \frac{1}{XY} \sideset{}{^\square} \sum_{\substack{N \in [X, X+Y] \\ P \mid N}} \sum_{f \in H^\new(N,k)} \lrabs{\sqrt{P}\lambda_f(P)\varepsilon(f)} \\
    &= \frac{1}{XY} \sideset{}{^\square} \sum_{\substack{N \in [X, X+Y] \\ P \mid N}} \sum_{f \in H^\new(N,k)} 1 \qquad \text{(since $P \parallel N$, by \cite[Proposition 13.3.14]{Cohen-Stromberg})} \\
    &\ll \frac{1}{XY} \sideset{}{^\square} \sum_{\substack{N \in [X, X+Y] \\ P \mid N}} kN 
    \qquad\qquad\qquad \text{(by \eqref{eqn:dim-formula-for-SknewN})}
    \\
    &\ll \frac{1}{XY} \lrb{\frac{Y}{P} + 1} \cdot k X \\
    &\ll \frac{k}{Y} + \frac{k}{P}.
\end{align}

Second, using the fact that
\begin{align}
    H_1(-d)=\sum_{ f^2\mid d}h\lrp{\frac{-d}{f^2}}+O(1), \qquad \text{(by 
    % the definition of $H(\cdot)$ 
    \cite[Definition 3.4.18]{Cohen-Stromberg})}
\end{align}
we have 
\begin{align}
    \eqref{eqn:trace-goal-term2}
    &= \frac{1}{XY} \sideset{}{^\square} \sum_{\substack{N \in [X, X+Y] \\ P \nmid N}}
    \lrp{
        \sum_{f^2 \mid 4PN} h\lrp{\frac{-4PN}{f^2}} + O(1) 
    }
    \\
    &= \frac{1}{XY} \sideset{}{^\square} \sum_{\substack{N \in [X, X+Y] \\ P \nmid N}} \lrp{\frac{h(-PN)}{2}+\frac{h(-4PN)}{2}+O(1)}
    \quad \text{(since $PN$ is squarefree)}
    \\
    &= A_0\sqrt{y} + O_\varepsilon\lrp{\bigg[\frac{1}{P^\frac{1}{2}X^\frac{1}{2}} + \frac{P^{\frac{11}{19}}}{Y^{\frac{16}{19}}} + \frac{P^\frac{1}{2}Y}{X^{\frac{3}{2}}}\bigg] X^\varepsilon} + O\lrp{\frac{1}{X}} \qquad \text{(by Lemma \ref{lem:zubrilina-prop-3.1})} \\
    &= A_0\sqrt{y} + O_\varepsilon\lrp{\bigg[\frac{1}{X^\frac{1}{2}} + \frac{P^{\frac{11}{19}}}{Y^{\frac{16}{19}}} + \frac{P^\frac{1}{2}Y}{X^{\frac{3}{2}}}\bigg] X^\varepsilon}.
\end{align}

Third, by Proposition \ref{prop:main-proposition}, we have that
\begin{align}
    \eqref{eqn:trace-goal-term3} 
    & =(-1)^{\frac{k}{2}-1} B_0\!\!\!\sum_{1\leq r\leq 2\sqrt{y}}U_{k-2}\lrp{\frac{r}{2\sqrt{y}}}\nu(r)\sqrt{4y-r^2} 
       +
       O_\varepsilon\lrp{ k
        \lrb{
            \frac{P^\frac{1}{2}}{X^\frac{1}{2}}
            + 1
        }
        \frac{P^\frac{1}{2} Y^\frac{1}{2}}{X}
        \EE
       }.
\end{align}

Fourth, we have that
\begin{align}
    \eqref{eqn:trace-goal-term4} 
    &= -\delta_{k=2}  (P+1) \frac{1}{XY} \sideset{}{^\square}\sum_{\substack{N \in [X, X+Y] \\ P \nmid N}} 1 \\
    &= -\delta_{k=2}  (P+1) \frac{1}{XY} \sum_{\substack{N \in [X, X+Y]}} \mu^2(N) + O\lrp{\frac{P}{XY}\lrb{\frac{Y}{P}+1}} \\
    &= -\delta_{k=2}  (P+1) \frac{1}{XY} 
    \lrb{
        \frac{Y}{\zeta(2)} + O\lrp{\sqrt X}
    }
    + O\lrp{\frac{1}{X} + \frac{P}{XY}} \\
    &\qquad \text{(since $\textstyle \sum_{N\le Z} \mu^2(N) = \frac{Z}{\zeta(2)} + O(\sqrt Z)$ by \cite[Exercise 4.4.10]{murty})} \\
    &= -\frac{\delta_{k=2}}{\zeta(2)}   \frac{P+1}{X}
    + O\lrp{\frac{1}{X} + \frac{P}{XY} + \frac{P}{X^\frac{1}{2}Y}} \\
    &= -\frac{\delta_{k=2}}{\zeta(2)}   y
    + O\lrp{\frac{1}{X} + \frac{P}{X^\frac{1}{2}Y}}.
\end{align}

Combining these four estimates for \eqref{eqn:trace-goal-term1}, \eqref{eqn:trace-goal-term2}, \eqref{eqn:trace-goal-term3}, and \eqref{eqn:trace-goal-term4}, we obtain that
\begin{align}
    & \frac{1}{XY} \sideset{}{^\square}\sum_{N \in [X, X+Y]}\sum_{f \in H^\new(N,k)} \sqrt{P}\lambda_f(P)\varepsilon(f) \\
    & = (-1)^{\frac{k}{2}-1} B_0\!\!\!\sum_{1 \le r \le 2\sqrt{y}}U_{k-2}\lrp{\frac{r}{2\sqrt{y}}} \nu(r)\sqrt{4y-r^2} + A_0\sqrt{y} - \frac{\delta_{k=2}}{\zeta(2)} y \\
    &\quad + O_\varepsilon\lrp{\bigg[\frac{1}{X^\frac{1}{2}} + \frac{P^{\frac{11}{19}}}{Y^{\frac{16}{19}}} + \frac{P^\frac{1}{2}Y}{X^{\frac{3}{2}}}\bigg]X^\varepsilon}  
    + O_\varepsilon\lrp{ k
        \lrb{
            \frac{P^\frac{1}{2}}{X^\frac{1}{2}}
            + 1
        }
        \frac{P^\frac{1}{2} Y^\frac{1}{2}}{X}
        \EE
       } \\
    &\quad + O\lrp{\frac{1}{X} + \frac{P}{X^{\frac{1}{2}}Y}} +  O\lrp{\frac{k}{Y}}+O\lrp{\frac{k}{P}}.
    \label{eqn:final-big-O-error-terms-PXY}
\end{align}

All that remains is to verify that the first four big-$O$ error terms of \eqref{eqn:final-big-O-error-terms-PXY} are each bounded by $k X^{-\delta'+\varepsilon}$. Recall that $P=O(X^{1+\delta_1})$, and that $Y=(1+o(1))X^{1-\delta_2}$. This then means that the first big-$O$ error term is bounded by
\begin{align}
    \bigg[\frac{1}{X^\frac{1}{2}} + \frac{P^{\frac{11}{19}}}{Y^{\frac{16}{19}}} + \frac{P^\frac{1}{2}Y}{X^{\frac{3}{2}}}\bigg]X^\varepsilon
    &\ll_\varepsilon 
    \lrb{
        X^\frac{-1}{2} + X^{\frac{11}{19} (1+\delta_1) - \frac{16}{19}(1-\delta_2) } + 
        X^{(1-\delta_2) + \frac{1}{2}(1+\delta_1) - \frac{3}{2}}
    }X^\varepsilon \\
    &=
    \lrb{
        X^\frac{-1}{2} + X^{
            \frac{-5}{19} + \frac{11 \delta_1}{19}  + \frac{16\delta_2}{19} 
        } + 
        X^{-\delta_2 + \frac{\delta_1}{2}}
    }X^\varepsilon \\
    &\ll_\varepsilon 
    \lrb{
        X^{
            \frac{-5}{19} + \frac{11 \delta_1}{19}  + \frac{16\delta_2}{19} 
            +
            \,\lrp{
                \frac{7}{171} + \frac{8 \delta_1}{19} - \frac{41\delta_2}{342}
            }
        } + 
        X^{-\frac{\delta_2}{2} + \delta_1}
    }X^\varepsilon \\
    &=
    \lrb{
        X^{-\frac{\delta_2}{2} +\delta_1 + \,\lrp{\frac{-2}{9} + \frac{11\delta_2}{9}}} + 
        X^{-\frac{\delta_2}{2} +\delta_1 }
    }X^\varepsilon \\
    &\ll_\varepsilon 
    \lrb{
        X^{-\frac{\delta_2}{2} +\delta_1 + \max\lrcb{0,\ \frac{-2}{9} + \frac{11\delta_2}{9}}} 
    }X^\varepsilon \\
    &= X^{-\delta'+\varepsilon} \\
    &\ll_\varepsilon  k X^{-\delta'+\varepsilon},
\end{align}
the second big-$O$ error term is bounded by
\begin{align}
    &k
    \lrb{
        \frac{P^\frac{1}{2}}{X^\frac{1}{2}}
        + 1
    }
    \frac{P^\frac{1}{2} Y^\frac{1}{2}}{X}
    \EE
    \\
    &\ll_\varepsilon  k \lrb{X^{\frac{1}{2}(1+\delta_1) - \frac{1}{2}} + X^0} X^{\frac{1}{2}(1+\delta_1) + \frac{1}{2}(1-\delta_2) - 1} \EE \\
    &\ll_\varepsilon  k X^{\frac{-\delta_2}{2}  + \delta_1} \EE\\
    &\ll_\varepsilon 
    k X^{\frac{-\delta_2}{2}  + \delta_1}
    \lrb{
        1 
        + 
        \frac{P^\frac{1}{10} X^\frac{3}{5}}{Y^\frac{9}{10}}
        +
        \frac{X}{Y^\frac{11}{9}}
        +
        \frac{X^\frac{1}{2}}{Y^\frac{11}{18}}
    }
    X^\varepsilon \\
    &\ll_\varepsilon  
    k X^{\frac{-\delta_2}{2}  + \delta_1}
    \lrb{
        X^0 + 
        X^{\frac{1}{10}(1+\delta_1) + \frac{3}{5} - \frac{9}{10}(1-\delta_2)} +
        X^{1-\frac{11}{9}(1-\delta_2)}
        +
        X^{\frac{1}{2} - \frac{11}{18} (1-\delta_2)}
    } X^{\varepsilon} \\
    &\ll_\varepsilon  
    k X^{\frac{-\delta_2}{2}  + \delta_1}
    \lrb{
        X^{\max \lrcb{
            0,\ \frac{-1}{5} + \frac{\delta_1}{10} + \frac{9\delta_2}{10},\ 
            \frac{-2}{9} + \frac{11\delta_2}{9},\ 
            \frac{-1}{9} + \frac{11\delta_2}{18}
        }}
    } X^{\varepsilon} \\
    &=
    k X^{\frac{-\delta_2}{2}  + \delta_1}
    \lrb{
        X^{\max \lrcb{
            0,\ 
            \frac{-2}{9} + \frac{11\delta_2}{9}
        }}
    } X^{\varepsilon} \\
    &= k X^{-\delta'+\varepsilon},
\end{align}
and the third and fourth big-$O$ error terms are bounded by
\begin{align}
    \frac{1}{X} + \frac{P}{X^\frac{1}{2} Y} + \frac{k}{Y} &\ll X^{-1} + X^{(1+\delta_1) - \frac{1}{2} - (1-\delta_2)} + kX^{-(1-\delta_2)} \\
    &\ll X^{-1} + X^{\frac{-1}{2}+\frac{1}{11}+\frac{4}{13}} + k X^{-1 + \frac{4}{13}}\\
    &\ll kX^{\frac{-1}{11}} \\
    &\ll k X^{-\delta'+\varepsilon}, \label{eqn:temp-third-and-fourth-big-O}
\end{align}
as desired. This completes the proof.
\end{proof}

Finally, we give an estimate for the average growth rate of $\dim S_k^\nw(N)$, which follows from a dimension formula of Martin \cite[Theorem 1]{Martin}.
\begin{lemma}
    \label{lem:dimension-estimate}
    Let 
    \begin{align}
        C_0 := \frac{1}{\zeta(2)} \prod_p \lrp{1- \frac{1}{p^2+p}}.
    \end{align}
    Then
    \begin{align}
        \sideset{}{^\square} \sum_{\substack{N \in [X, X+Y]}} \sum_{f \in H^\new(N,k)} 1
        &= \frac{k-1}{12} C_0 XY + O\lrp{kY^2 + kX^{\frac{3}{2}+\varepsilon}}.
    \end{align}
\end{lemma}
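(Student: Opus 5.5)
The plan is to start from Martin's dimension formula \cite[Theorem 1]{Martin} for squarefree $N$. For $k\ge 2$ even and $N$ squarefree it reads
\begin{align}
    \dim S_k^\nw(N) = \frac{k-1}{12}\,\phi^\#(N) + O\lrp{2^{\omega(N)}} + \delta_{k=2}\,\mu(N),
\end{align}
where $\phi^\#(N) = \prod_{p\mid N}(p-1)$. The remaining terms are the elliptic and cuspidal contributions. For squarefree $N$ these are bounded by a constant times a product of local factors in $\{0,1,2\}$, or by $\mu(N)$ and similar arithmetic functions, so each is $O(d(N))=O_\varepsilon(N^\varepsilon)$ uniformly in $k$.

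Summing these error terms over $N\in[X,X+Y]$ gives $O_\varepsilon(YX^{\varepsilon})$, which is absorbed into $kX^{3/2+\varepsilon}$ since $Y\le X$. The main task is therefore to show
\begin{align}
    \sideset{}{^\square}\sum_{N\in[X,X+Y]}\phi^\#(N) = C_0XY + O\lrp{Y^2 + X^{\frac32+\varepsilon}}.
\end{align}

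To prove this, write $\mu^2(N)\phi^\#(N)/N = \sum_{d\mid N} g(d)$, where $g$ is multiplicative with $g(p) = -\tfrac1p - 1$, $g(p^2) = \tfrac1p$, and $g(p^j) = 0$ for $j\ge3$. This follows from the local factor at $p$, namely $1, 1-\tfrac1p, 0, 0,\dots$ for $p^0,p^1,p^2,\dots$, divided through by $\zeta$'s local factor. Then
\begin{align}
    \sum_{N\le Z}\mu^2(N)\frac{\phi^\#(N)}{N} = \sum_{d\le Z} g(d)\lrp{\frac{Z}{d} + O(1)} = Z\sum_d \frac{g(d)}{d} + O\lrp{Z\sum_{d>Z}\frac{|g(d)|}{d} + \sum_{d\le Z}|g(d)|}.
\end{align}
Since $|g(d)|\ll d^\varepsilon\,\kappa(d)$ with $\kappa$ supported on cube-free $d$, one must control $\sum_{d\le Z}|g(d)|$. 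Because $|g(p)|\approx 1$ this sum is as large as $Z^{1+\varepsilon}$, so this crude route is too weak. This is the main obstacle.

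I would instead factor differently, writing $\mu^2(N)\phi^\#(N)/N = \sum_{ab=N} \mu^2\text{-type}(a)\,h(b)$ with $h(p)=-1/p$ and $h(p^2)$ of size $O(1/p)$. Concretely, let $h=g*\mathbb{1}$ restricted appropriately, so that $\sum_b |h(b)|b^{-1/2}<\infty$. Combining this with the squarefree count $\sum_{a\le W}\mu^2(a) = W/\zeta(2)+O(\sqrt W)$, the hyperbola method gives
\begin{align}
    \sum_{N\le Z}\mu^2(N)\frac{\phi^\#(N)}{N} = C_0 Z + O_\varepsilon\lrp{Z^{\frac12+\varepsilon}},
\end{align}
with $C_0 = \zeta(2)^{-1}\prod_p(1-\tfrac{1}{p^2+p})$. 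Checking this requires evaluating the Euler product: $\prod_p(1-p^{-2})(1+(1-\tfrac1p)p^{-1}) = \prod_p(1-p^{-2})(1+p^{-1}-p^{-2})$, and one must verify this equals the stated $C_0$. I would verify this identity locally at each $p$, adjusting the factorization if needed.

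Finally, use partial summation to pass from $\phi^\#(N)/N$ to $\phi^\#(N)$ on $[X,X+Y]$. This gives $\int_X^{X+Y} C_0 t\,dt + O(X\cdot X^{1/2+\varepsilon}) = C_0XY + \tfrac{C_0}{2}Y^2 + O(X^{3/2+\varepsilon})$. Multiplying by $\tfrac{k-1}{12}$ then yields the claimed formula, with error $O(kY^2 + kX^{3/2+\varepsilon})$.
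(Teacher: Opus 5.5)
Your overall route works. It differs from the paper's mainly in where the squarefree totient asymptotic comes from. The paper writes Martin's formula as $\dim S_k^\nw(N)=\frac{k-1}{12}\phi(N)+O_\varepsilon(N^\varepsilon)$; for squarefree $N$ your $\phi^\#(N)$ is just $\phi(N)$. It then quotes \cite[Lemma 3.21]{Zubrilina} for $\sideset{}{^\square}\sum_{N\le Z}\phi(N)=\frac{C_0}{2}Z^2+O_\varepsilon(Z^{3/2+\varepsilon})$ and subtracts the values at $Z=X+Y$ and $Z=X$. The $kY^2$ term is the $\frac{C_0}{2}Y^2$ left over from $(X+Y)^2-X^2$, exactly as in your partial summation. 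You instead prove the mean value of $f(N):=\mu^2(N)\phi(N)/N$ by a Dirichlet-convolution argument, the same mechanism as the paper's Lemma \ref{lem:B-approx}, and then sum by parts over $[X,X+Y]$. This makes the lemma self-contained at the cost of a few lines, and the resulting error terms are the same.

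That said, several computations in your write-up are wrong and should be corrected.

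(i) The function $g=f*\mu$ has local series $(1-x)\bigl(1+(1-\tfrac1p)x\bigr)=1-\tfrac{x}{p}-(1-\tfrac1p)x^2$. So $g(p)=-\tfrac1p$ and $g(p^2)=-(1-\tfrac1p)$, not $-\tfrac1p-1$ and $\tfrac1p$. Then $|g(ab^2)|\le \tfrac1a$ for coprime squarefree $a,b$, which gives $\sum_{d\le Z}|g(d)|\ll Z^{1/2}\log Z$ and $Z\sum_{d>Z}|g(d)|/d\ll Z^{1/2}$. So the ``crude route'' you abandoned already gives $C_0Z+O(Z^{1/2}\log Z)$, and there is no obstacle.

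(ii) $g*\mathbf 1$ is $f$ itself, so ``$h=g*\mathbf 1$'' is not the function you want. The correct choice is $h=f*(\mu^2)^{-1}$, with local series $\frac{1+(1-1/p)x}{1+x}$, i.e. $h(p^j)=(-1)^j/p$ for every $j\ge1$. This $h$ is not supported on cube-free numbers, but $\sum_b|h(b)|b^{-1/2}<\infty$ still holds because the local factors are $1+O(p^{-3/2})$.

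(iii) The Euler product you propose to check, $\prod_p(1-p^{-2})(1+p^{-1}-p^{-2})$, diverges, so that identity cannot be verified. The mean value of $f$ is $\prod_p(1-p^{-1})(1+p^{-1}-p^{-2})=\prod_p(1-2p^{-2}+p^{-3})$, which does equal $\prod_p(1-p^{-2})\bigl(1-\frac{1}{p^2+p}\bigr)=C_0$. Equivalently, with the correct $h$ the constant is $\zeta(2)^{-1}\sum_b h(b)/b=\zeta(2)^{-1}\prod_p\bigl(1-\frac{1}{p(p+1)}\bigr)=C_0$ directly.

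With these fixes the rest of your argument goes through as written.
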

\begin{proof}
    The dimension formula \cite[Theorem 1]{Martin} implies that for squarefree $N$,
    \begin{align} \label{eqn:dim-formula-for-SknewN}
        \dim S_k^\nw(N) = \frac{k-1}{12} \phi(N) + O_\varepsilon(N^\varepsilon).
    \end{align}
    Moreover, we have from \cite[Lemma 3.21]{Zubrilina} that
    \begin{align}
        \sideset{}{^\square} \sum_{N\le Z} \phi(N) = \frac{C_0}{2} Z^2 + O_\varepsilon(Z^{\frac{3}{2}+\varepsilon}).
    \end{align}
    
    Then combining these two facts yields the estimate
    \begin{align}
        \sideset{}{^\square} \sum_{\substack{N \in [X, X+Y]}} \sum_{f \in H^\new(N,k)} 1
        &= \sideset{}{^\square} \sum_{\substack{N \in [X, X+Y]}} \lrb{
            \frac{k-1}{12} \phi(N) + O_\varepsilon(N^\varepsilon)
        }\\
        &= \frac{k-1}{12} \frac{C_0}{2} \lrb{(X+Y)^2 - X^2} + O_\varepsilon(k X^{\frac{3}{2}+\varepsilon}) + O_\varepsilon(YX^\varepsilon) \\
        &= \frac{k-1}{12} C_0 XY + O_\varepsilon(kY^2 + k X^{\frac{3}{2}+\varepsilon}),
    \end{align}
    as desired.
\end{proof}

\begin{remark}
    We note that \cite[Equation (22)]{Zubrilina} obtained a similar result to Lemma \ref{lem:dimension-estimate}. However, it seems that the statement of \cite[Equation (22)]{Zubrilina} omitted the $O_\varepsilon(kY^2)$ error term that arises from $\frac{k-1}{12} \frac{C_0}{2} \big[(X+Y)^2 - X^2\big]$.
\end{remark}

Combining Proposition \ref{prop:pre-main-result} and Lemma \ref{lem:dimension-estimate} then 
yields Theorem \ref{thm:main-result} with the same constant $\delta'$ as in Proposition \ref{prop:pre-main-result}: 
$
    \delta' = \frac{\delta_2}{2} - \delta_1 +\min\lrcb{0,\frac{2}{9}-\frac{11\delta_2}{9}}
$.

{
\renewcommand{\thetheorem}{\ref{thm:main-result}}
\addtocounter{theorem}{-1}
\begin{theorem}
Fix an even integer $k\ge 2$. Let $P$, $X$, and $Y$ be parameters going to infinity with $X,Y\in \mathbb{R}_{+}$ and $P$ prime; assume further that $Y=(1+o(1))X^{1-\delta_2},$ and $P\ll X^{1+\delta_1}$ for some $\delta_1,\delta_2$ with $0<\delta_1<\frac{1}{11}$, $2\delta_1<\delta_2<\frac{1}{13} (4-18\delta_1).$ Let $y := \frac{P}{X}$. Then 
\begin{align}
    &\frac{
    \sum_{N\in[X,X+Y]}^\square \sum_{f\in H^{\rm new}(N,k)}\sqrt{P}\lambda_f(P)\varepsilon(f)
    }{
    \sum_{N\in[X,X+Y]}^\square \sum_{f\in H^{\rm new}(N,k)}1 
    }  \\
    &=\frac{\alpha(-1)^{\frac{k}{2}-1}}{k-1}\sum_{1\leq r\leq 2\sqrt{y}}U_{k-2}\lrp{\frac{r}{2\sqrt{y}}}\nu(r)\sqrt{4y-r^2} +\frac{\beta}{k-1}\sqrt{y}-\gamma \delta_{k=2}y 
    \label{eqn:temp_main-term-of-main-thm}
    \\
    &\quad\ +O_\varepsilon\lrp{X^{-\delta'+\varepsilon}+\frac{1}{P}}
\end{align}
where 
$
    \delta' := \frac{\delta_2}{2} - \delta_1 +\min\lrcb{0,\frac{2}{9}-\frac{11\delta_2}{9}} > 0
$,
and 
\begin{align}
\alpha= 2\pi \prod_{p}\frac{1-p-2p^2+p^4}{p^4-2p^2+p}, \quad\ \ \beta=2\pi\prod_{p}\frac{-1+p^2+p^3}{p(-1+p+p^2)}, \quad \ \ 
\gamma=12\prod_{p}\frac{p(1+p)}{-1+p+p^2}.
\end{align}
\end{theorem}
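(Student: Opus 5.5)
The plan is to divide the numerator estimate of Proposition~\ref{prop:pre-main-result} by the denominator estimate of Lemma~\ref{lem:dimension-estimate}. After multiplying through by $\frac{1}{XY}$, Proposition~\ref{prop:pre-main-result} gives the numerator as $XY\,(M + O_\varepsilon(kX^{-\delta'+\varepsilon}+k/P))$, where
\begin{align}
M := (-1)^{\frac{k}{2}-1} B_0\sum_{1 \le r \le 2\sqrt{y}} U_{k-2}\lrp{\frac{r}{2\sqrt{y}}}\nu(r)\sqrt{4y-r^2} + A_0\sqrt{y} - \frac{\delta_{k=2}}{\zeta(2)} y.
\end{align}
Lemma~\ref{lem:dimension-estimate} gives the denominator as $\frac{k-1}{12}C_0XY\,(1+\eta)$ with $\eta \ll Y/X + X^{1/2+\varepsilon}/Y$. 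Since $Y=(1+o(1))X^{1-\delta_2}$, we have $Y/X \ll X^{-\delta_2}$ and $X^{1/2}/Y \ll X^{-1/2+\delta_2}$. Both are power savings, because $\delta_2<\frac{4}{13}<\frac12$.

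The ratio is then
\begin{align}
\frac{12}{(k-1)C_0}\bigl(M+O(kX^{-\delta'+\varepsilon}+k/P)\bigr)(1+O(\eta)).
\end{align}
The $k$ in the error cancels against $\frac{1}{k-1}$, leaving $O_\varepsilon(X^{-\delta'+\varepsilon}+1/P)$ as claimed. The cross term $M\eta$ is handled by bounding $M$ trivially. We have $|U_{k-2}|\le k-1$, $\nu(r)\ll1$, and the $r$-sum has $O(\sqrt y)$ terms each of size $O(\sqrt y)$, so $M\ll k\,y+\sqrt y$. Since $P\ll X^{1+\delta_1}$, this gives $y\ll X^{\delta_1}$, and hence $\frac{M\eta}{k-1}\ll X^{\delta_1}\bigl(X^{-\delta_2}+X^{-1/2+\delta_2+\varepsilon}\bigr)$.

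I expect the main obstacle to be checking that this last quantity is $\ll X^{-\delta'+\varepsilon}$. The first piece follows from $\delta_1-\delta_2 \le -\frac{\delta_2}{2}+\delta_1-\frac{\delta_2}{2}\le -\delta'$. For the second piece we need $\delta_1+\delta_2-\frac12\le \delta_1-\frac{\delta_2}{2}$, i.e.\ $\frac{3\delta_2}{2}\le\frac12$, which holds since $\delta_2<\frac{4}{13}<\frac13$. If the trivial bound on $M$ turns out to be too lossy somewhere, I would fall back on the sharper bound $M\ll_k y$ together with the same exponent comparison.

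It remains to identify the constants. I would verify that $\frac{12B_0}{C_0}=\alpha$, $\frac{12A_0}{C_0}\cdot\frac{1}{k-1}$ gives $\frac{\beta}{k-1}$, and $\frac{12}{\zeta(2)C_0}\cdot\frac{1}{k-1}=\gamma$ at $k=2$. Each of these is an Euler product comparison in which the factors $\frac{1}{\zeta(2)}$ cancel. For instance,
\begin{align}
\frac{B_0}{C_0}=\frac{1}{\pi}\prod_p\frac{p^4-2p^2-p+1}{(p^2-1)^2}\cdot\frac{p^2+p}{p^2+p-1},
\end{align}
and I would compare this with $\frac{\alpha}{12}$ factor by factor. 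The remaining constant checks are routine local computations of the same kind.
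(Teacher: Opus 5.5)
Your proposal is correct and follows the paper's proof essentially step for step. You invert Lemma \ref{lem:dimension-estimate} to get a factor $1+O_\varepsilon(Y/X+X^{1/2+\varepsilon}/Y)$, multiply by Proposition \ref{prop:pre-main-result}, absorb the cross term using the trivial bound $O(k(y+\sqrt y))$ on the main term with $y\ll X^{\delta_1}$ and $\delta_2<\frac{4}{13}$, and identify $\alpha=\frac{12B_0}{C_0}$, $\beta=\frac{12A_0}{C_0}$, $\gamma=\frac{12}{\zeta(2)C_0}$ via $\frac{12}{\pi}=2\pi\prod_p(1-p^{-2})$. (Your fallback bound $M\ll_k y$ would fail for $y<1$ because of the $A_0\sqrt y$ term, but you never need it.)
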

}

\begin{proof}
    Lemma \ref{lem:dimension-estimate} implies that
    \begin{align}
        \frac{1}{
            \sideset{}{^\square} \sum_{\substack{N \in [X, X+Y]}} \sum_{f \in H^\new(N,k)} 1
        }
        &= \frac{1}{
            \frac{k-1}{12} C_0 XY 
        }
        \lrb{
            1 + O_\varepsilon\lrp{\frac{Y}{X} + \frac{X^{\frac{1}{2}+\varepsilon}}{Y}}
        }.
    \end{align}
    \noindent
    Thus by Proposition \ref{prop:pre-main-result}, we have that 
    \begin{align}
    &\frac{
    \sum_{N\in[X,X+Y]}^\square \sum_{f\in H^{\rm new}(N,k)}\sqrt{P}\lambda_f(P)\varepsilon(f)
    }{
    \sum_{N\in[X,X+Y]}^\square \sum_{f\in H^{\rm new}(N,k)}1 
    }  \\
    &= \frac{1}{
            \frac{k-1}{12} C_0  
        }
        \lrb{
            1 + O_\varepsilon\lrp{\frac{Y}{X} + \frac{X^{\frac{1}{2}+\varepsilon}}{Y}}
        } \times
    \\
    &
    \lrb{
        (-1)^{\frac{k}{2}-1} B_0\!\!\!\sum_{1 \le r \le 2\sqrt{y}}   U_{k-2}\lrp{\frac{r}{2\sqrt{y}}}\nu(r)\sqrt{4y-r^2} 
        + A_0\sqrt{y} - \frac{\delta_{k=2}}{\zeta(2)} y  + O_\varepsilon\lrp{kX^{-\delta'+\varepsilon}+\frac{k}{P}}
    } \\
    &= \frac{\alpha(-1)^{\frac{k}{2}-1}}{k-1}\sum_{1\leq r\leq 2\sqrt{y}}U_{k-2}\lrp{\frac{r}{2\sqrt{y}}}\nu(r)\sqrt{4y-r^2} +\frac{\beta}{k-1}\sqrt{y}-\gamma \delta_{k=2}y \\
    &\quad \ + O_\varepsilon\lrp{
        (y+\sqrt y) \cdot
        \lrb{
            \frac{Y}{X} + \frac{X^{\frac{1}{2}+\varepsilon}}{Y} 
        }
        +
        1 \cdot 
        \lrb{
            X^{-\delta'+\varepsilon} + \frac{1}{P}
        }
    }, \label{eqn:temp-main-thm-error-term}
    \\
    &\qquad \text{\Big(since the main term \eqref{eqn:temp_main-term-of-main-thm} is $O\lrp{k\big(y+\sqrt y\big)}$\Big)}
\end{align}
where
\begin{align}
    \alpha &= \frac{12}{C_0} B_0 = 2\pi 
    \prod_p 
    \frac{p^2+p}{p^2+p-1} \cdot \frac{p^2-1}{p^2} \cdot \frac{p^4 - 2p^2 - p + 1}{(p^2-1)^2}
    = 2\pi \prod_{p}\frac{1-p-2p^2+p^4}{p^4-2p^2+p},
    \\
    \beta  &= \frac{12}{C_0} A_0
    = 2\pi 
    \prod_p 
    \frac{p^2+p}{p^2+p-1} \cdot \frac{p^2-1}{p^2}
    \cdot \frac{p^3+p^2-1}{(p+1)^2(p-1)}
    = 2\pi\prod_{p}\frac{-1+p^2+p^3}{p(-1+p+p^2)},
    \\
    \gamma &= \frac{12}{C_0} \frac{1}{\zeta(2)} = 12 \prod_p \frac{p^2+p}{p^2+p-1} = 12 \prod_p \frac{p(1+p)}{-1+p+p^2}.
\end{align}
Moreover, the big-$O$ error term \eqref{eqn:temp-main-thm-error-term} is bounded by
\begin{align}
    & (y+\sqrt y) \cdot
        \lrb{
            \frac{Y}{X} + \frac{X^{\frac{1}{2}+\varepsilon}}{Y} 
        }
        +
        1 \cdot 
        \lrb{
            X^{-\delta'+\varepsilon} + \frac{1}{P}
        } \\
    &\ll_\varepsilon  
        \lrp{
            X^{(1+\delta_1)-1} + X^{\big(\frac{1}{2} + \frac{\delta_1}{2}\big) - \frac{1}{2}}
        }
        \lrb{
            X^{-\delta_2}
            + X^{\frac{1}{2} - (1 - \delta_2)+\varepsilon}
        }
        +
        \lrb{X^{-\delta'+\varepsilon} + \frac{1}{P}} \\
    &\ll_\varepsilon 
        X^{\delta_1} 
        \lrb{
            X^{-\delta_2}
            + X^{\frac{-1}{2} + \frac{4}{13} +\varepsilon}
        }
        +
        \lrb{X^{-\delta'+\varepsilon} + \frac{1}{P}} \\
    &\ll_\varepsilon 
    X^{\delta_1 - \frac{\delta_2}{2}} + \lrb{X^{-\delta'+\varepsilon} + \frac{1}{P}} \\
    &\ll_\varepsilon 
    X^{-\delta'+\varepsilon} + \frac{1}{P},
\end{align}
completing the proof.
\end{proof}

\section{Proof of Proposition \ref{prop:S-estimate}} 
\label{sec:proof-of-prop-S-estimate}

In this section, we prove Proposition \ref{prop:S-estimate}, which will be used to prove Theorem \ref{thm:second-main-result}.

\begin{remark}
    The proof strategy for Theorem \ref{thm:second-main-result} laid out in \cite{Zubrilina} relied on a certain bound $R(T) \ll_\varepsilon  T^{-2+\varepsilon}$ given in \cite[Lemma 7.3]{Zubrilina}. However, as noted in the introduction, the argument given in \cite{Zubrilina} seems to not actually yield this bound. For this reason, we will proceed by a different strategy.
\end{remark}

Define the arithmetic functions
\begin{align*}
    Q(n):=\mu^2(n) \prod_{p\mid n}\frac{p^2}{p^4-2p^2-p+1} 
    \qquad\text{and}\qquad
    b(n) := n^2 Q(n).
\end{align*}

We first prove the following estimate for the partial sums of $b(n)$.
\begin{lemma} \label{lem:B-approx}
    For $x \ge 1$, we have
    \begin{align}
        B(x) := \sum_{n \le x} b(n) = \Delta x + O\lrp{\sqrt x},
    \end{align}
    where 
    \begin{align}
        \Delta:=\frac{2\pi}{\alpha\zeta(2)} =\frac{1}{\zeta(2)}\prod_p\left(1+\frac{2p-1}{p^4-2p^2-p+1}\right).
    \end{align}
\end{lemma}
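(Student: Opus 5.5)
Write $b(n) = n^2 Q(n) = \mu^2(n)\prod_{p\mid n} \frac{p^4}{p^4-2p^2-p+1}$. This is a multiplicative function supported on squarefree $n$, with $b(p) = 1 + g(p)$ where
\begin{align}
    g(p) := \frac{2p^2+p-1}{p^4-2p^2-p+1} = O(p^{-2}).
\end{align}
The plan is to write $b = \mu^2 * h$ as a Dirichlet convolution with a small multiplicative correction $h$, and then use the standard count of squarefree integers.

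\textbf{Step 1: the correction $h$.} Define $h$ to be multiplicative, supported on squarefree $n$, with $h(p) = g(p)$. Then for squarefree $n$ we have $\sum_{d\mid n} h(d) = \prod_{p\mid n}(1+g(p)) = b(n)$. It follows that
\begin{align}
    b(n) = \sum_{\substack{d\mid n \\ \mu^2(n)=1}} h(d) = \sum_{dm=n} h(d)\,\mu^2(m)\,\mathbbm{1}_{(d,m)=1}.
\end{align}
The coprimality condition is awkward to carry through the counting. To remove it, I would instead express the Dirichlet series directly as
\begin{align}
    \sum_n \frac{b(n)}{n^s} = \prod_p\lrp{1+\frac{1+g(p)}{p^s}} = \frac{\zeta(s)}{\zeta(2s)}\, G(s),
\end{align}
with $G(s) = \prod_p \lrp{1+\frac{g(p)}{p^s+1}}$. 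This product converges absolutely for $\mathrm{Re}(s) > -1$, since $g(p) \ll p^{-2}$. Hence $b = \mu^2 * c$, where $c$ is multiplicative with $c(p^j) \ll p^{-2-j}$ up to sign; in particular $\sum_d |c(d)|\, d^{1/2} < \infty$.

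\textbf{Step 2: counting.} Using $\sum_{m\le z}\mu^2(m) = \frac{z}{\zeta(2)} + O(\sqrt z)$, which is already used earlier in the paper, we get
\begin{align}
    B(x) = \sum_{d\le x} c(d)\lrp{\frac{x}{d\zeta(2)} + O\lrp{\sqrt{x/d}}}
    = \frac{x}{\zeta(2)}\sum_{d} \frac{c(d)}{d} + O\lrp{x\sum_{d>x}\frac{|c(d)|}{d}} + O\lrp{\sqrt x \sum_d \frac{|c(d)|}{\sqrt d}}.
\end{align}
Both error sums are controlled by the absolute convergence from Step 1. For the tail, $x\sum_{d>x}|c(d)|/d \le \sqrt x\sum_d |c(d)|\,d^{-1/2}$. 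Thus the error is $O(\sqrt x)$.

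\textbf{Step 3: identifying the constant.} The main term equals $\frac{x}{\zeta(2)}\,G(1)$, where
\begin{align}
    G(1) = \prod_p\lrp{1+\frac{g(p)}{p+1}}.
\end{align}
Since $\frac{2p^2+p-1}{p+1} = 2p-1$, this is $\prod_p\lrp{1+\frac{2p-1}{p^4-2p^2-p+1}}$, which matches the second expression for $\Delta$.

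It remains to check the identity $\Delta = \frac{2\pi}{\alpha\zeta(2)}$ against the product for $\alpha$. Each local factor satisfies
\begin{align}
    1 + \frac{2p-1}{p^4-2p^2-p+1} = \frac{p^4-2p^2+p}{p^4-2p^2-p+1},
\end{align}
which is exactly the reciprocal of the local factor of $\alpha/(2\pi)$. This settles the constant.

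\textbf{Expected obstacle.} The main care point is Step 1: writing the Euler factor correctly, including the $\zeta(s)/\zeta(2s)$ normalization, and verifying the coefficient bounds for $c$ that give absolute convergence at $\mathrm{Re}(s) = 1/2$. The rest is routine.
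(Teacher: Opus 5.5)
Your route is the paper's. The paper writes $b=\mu^2*g$ with $g(p^e)=(-1)^{e-1}(c_p-1)$ and $c_p=\frac{p^4}{p^4-2p^2-p+1}$, using the local identity $1+c_px=(1+x)\bigl(1+\frac{(c_p-1)x}{1+x}\bigr)$. That identity is your factorization $\frac{\zeta(s)}{\zeta(2s)}G(s)$ read at $x=p^{-s}$, so your $c$ is exactly the paper's $g$. Steps 2 and 3 match the paper line by line. Your check that the local factors of $\Delta\zeta(2)$ are reciprocal to those of $\alpha/(2\pi)$ supplies a verification the paper only asserts.

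The one thing to fix is Step 1, where the justification of the only analytic input is incorrect.

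\textbf{The coefficients do not decay in $j$.} Expanding $\frac{g(p)}{p^s+1}=g(p)\sum_{j\ge1}(-1)^{j-1}p^{-js}$ gives $c(p^j)=(-1)^{j-1}g(p)$ for every $j\ge1$. So $|c(p^j)|=g(p)\asymp p^{-2}$ for all $j$; for example $c(2^j)=\pm\frac{9}{7}$ for all $j$. The bound $c(p^j)\ll p^{-2-j}$ is therefore false.

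\textbf{Consequences of the correct coefficients.} The sum $\sum_d|c(d)|d^{1/2}$ diverges. The series $\sum_d c(d)d^{-s}$ converges absolutely only for $\mathrm{Re}(s)>0$, not for $\mathrm{Re}(s)>-1$. The fact that the Euler product $G(s)$ makes sense at some $s$ does not give absolute convergence of the coefficient series there. That requires $\sum_j|c(p^j)|p^{-j\sigma}<\infty$, which here means $\sigma>0$.

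\textbf{What you actually need.} Step 2 uses
\[
\sum_{d\ge1}\frac{|c(d)|}{\sqrt d}=\prod_p\Bigl(1+g(p)\,\frac{p^{-1/2}}{1-p^{-1/2}}\Bigr)<\infty,
\]
which holds because each factor is $1+O(p^{-5/2})$. This is exactly the convergence estimate in the paper's proof. It also gives the absolute convergence at $s=1$ needed in Step 3 to identify $\sum_d c(d)/d$ with $G(1)$. With this replacement your proof is complete.
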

\begin{proof}
    Observe that
    \begin{align*} b(n)=\mu(n)^2\prod_{p\mid n}c_p,\qquad \text{where } c_p:=\frac{p^4}{p^4-2p^2-p+1}.
    \end{align*}
    Note that $c_p-1\ll\frac{1}{p^2}$. 
    Define the multiplicative function $g$ by
    \[
        g(p^e)=(-1)^{e-1}(c_p-1)
        \qquad \text{for } e\ge1.
    \]
    Then we have that $b = \mu^2 * g$ 
    because of the formal power series identity
    \begin{align*}
        \sum_{e\ge0}b(p^e)x^e
        &=
        1+c_px \\
        &=
        (1+x)
        \left(
            1+\frac{(c_p-1)x}{1+x}
        \right) \\
        &=
        \left(\sum_{e\ge0}\mu^2(p^e)x^e\right)
        \left(\sum_{e\ge0}g(p^e)x^e\right).
    \end{align*}

    Moreover, we have that
    \begin{align} \label{eq:h(n)-convergence}
        \sum_{n\ge1}\frac{|g(n)|}{\sqrt{n}}
        =
        \prod_p
        \sum_{e\ge0} \frac{|g(p^e)|}{p^{\frac{e}{2}}}
        =\prod_p\left(1+(c_p-1)\frac{p^{\frac{-1}{2}}}{1-p^{\frac{-1}{2}}}\right)
        < \infty,
    \end{align}
    since each local factor is $1+O\big(p^{\frac{-5}{2}}\big)$.

    This then implies that
    \begin{align}
        B(x)
        \,\,&\!\!:=\sum_{n\le x}b(n) \\
        &= \sum_{n \le x} \sum_{d \mid n} g(d) \mu^2\lrp{\frac{n}{d}} 
        \\
        &=\sum_{d\le x}g(d)\sum_{m\le \frac{x}{d}}\mu^2(m)\\
        &= \sum_{d\le x}g(d) \lrb{
            \frac{1}{\zeta(2)} \frac{x}{d} + O\lrp{\sqrt{\frac{x}{d}}}
        } 
        \\
        &\qquad \text{(since $\textstyle \sum_{n\le X} \mu^2(n) = \frac{X}{\zeta(2)} + O(\sqrt X)$ by \cite[Exercise 4.4.10]{murty})}
        \\
        &= \frac{x}{\zeta(2)}\sum_{d\le x}\frac{g(d)}{d}+O\left(\sqrt{x}\sum_{d\le x}\frac{|g(d)|}{\sqrt{d}}\right) \\
        &= \lrb{
            \frac{x}{\zeta(2)} \sum_{d \ge 1} \frac{g(d)}{d} - \frac{x}{\zeta(2)} \sum_{d > x} \frac{g(d)}{d}
        }
        + O(\sqrt{x}) 
        \qquad\qquad \text{(by \eqref{eq:h(n)-convergence})}
        \\
        &= \Delta x+O(\sqrt{x}),
        \qquad \text{\big(since 
        $\textstyle
        \lrabs{x
        \sum_{d>x}\frac{g(d)}{d}
        }
        \le
        \sqrt{x}
        \sum_{d>x}\frac{|g(d)|}{\sqrt d}
        \ll \sqrt{x}
        $
        by \eqref{eq:h(n)-convergence}\big)}
    \end{align}    
where 
\begin{align*}
    \Delta=&\frac{1}{\zeta(2)}\sum_{d\ge1}\frac{g(d)}{d}\\
    =&\frac{1}{\zeta(2)}\prod_p\left(1+(c_p-1)\frac{p^{-1}}{1+p^{-1}}\right)\\
    =&\frac{1}{\zeta(2)}\prod_p\left(1+\frac{2p-1}{p^4-2p^2-p+1}\right) \\
    =& \frac{2\pi}{\alpha\zeta(2)},
\end{align*}
where $\alpha$ is as defined in the statement of Theorem \ref{thm:main-result}. This completes the proof.
\end{proof}

Next, we cite the following lemma from \cite{Zubrilina}. In the following, $J_{k-1}(x)$ denotes the Bessel function of the
first kind of order $k-1$.
\begin{lemma}[{First two displayed equations after the proof of \cite[Lemma 7.3]{Zubrilina}}] \label{lem:properties-of-J}
    Fix an even integer $k \ge 6$. Then there exists a finite linear combination $J$ of Bessel functions of orders at least $4$ such that
    \begin{align} \label{eq:J-definition}
        \frac{d}{dx}
        \left[\frac{J(x)}{x^4}\right]
        =
        \frac{J_{k-1}(x)}{x^4}.
    \end{align}
    Moreover, this function $J(x)$ satisfies
    \begin{align} \label{eq:J-integral}
        \int_0^\infty \frac{J(x)}{x}\, dx
        =-\frac14.
    \end{align}
\end{lemma}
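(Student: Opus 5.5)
The plan is to build $J$ explicitly from the standard Bessel identities, and then to compute the integral by an integration by parts that reduces it to $\int_0^\infty J_{k-1}(t)\,dt=1$.

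\textbf{Construction of $J$.} For an integer $m\ge 1$, combine $J_m'=\tfrac12(J_{m-1}-J_{m+1})$ with $\tfrac{m}{x}J_m=\tfrac12(J_{m-1}+J_{m+1})$. This gives
\begin{align}
    \frac{d}{dx}\lrb{\frac{J_m(x)}{x^4}} = \frac{1}{x^4}\lrb{J_m'(x)-\frac{4}{x}J_m(x)} = \frac{1}{x^4}\lrb{\lrp{\frac12-\frac{2}{m}}J_{m-1}(x)-\lrp{\frac12+\frac{2}{m}}J_{m+1}(x)}.
\end{align}
For $m=4$ the $J_3$ coefficient vanishes, so $\frac{d}{dx}[J_4/x^4]=-J_5/x^4$. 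For even $m\ge 6$, the derivative of $J_m/x^4$ is a combination of $J_{m-1}/x^4$ and $J_{m+1}/x^4$, and the top coefficient $-(\tfrac12+\tfrac2m)$ is nonzero.

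Since $k$ is even with $k\ge 6$, the target order $k-1\ge 5$ is odd. I will induct upward over odd $n=5,7,\dots,k-1$ to show that $J_n/x^4$ is the derivative of $G_n/x^4$, where $G_n$ is a linear combination of $J_4,J_6,\dots,J_{n-1}$. The base case is $G_5=-J_4$. For the inductive step, solve the displayed identity with $m=n-1$ for $J_n/x^4$; the leftover $J_{n-2}/x^4$ term is handled by the inductive hypothesis. Taking $J:=G_{k-1}$ gives a finite combination of Bessel functions of even orders between $4$ and $k-2$, all at least $4$, satisfying \eqref{eq:J-definition}.

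\textbf{The integral.} Set $F(x):=J(x)/x^4$. Since each $J_m$ is $O(x^{-1/2})$ at infinity, $F(x)\to 0$ as $x\to\infty$. Together with $F'=J_{k-1}/x^4$, this gives
\begin{align}
    F(x)=-\int_x^\infty \frac{J_{k-1}(t)}{t^4}\,dt,
\end{align}
which converges absolutely. Near $0$ we have $J(x)=O(x^4)$, so $J(x)/x$ is integrable there. At infinity, $J(x)/x=O(x^{-3/2})$, so $\int_0^\infty J(x)/x\,dx=\int_0^\infty x^3F(x)\,dx$ converges absolutely.

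I would then truncate at $T$ and integrate by parts with $\frac{d}{dx}\bigl(\frac{x^4}{4}\bigr)=x^3$:
\begin{align}
    \int_0^T x^3F(x)\,dx=\frac{T^4}{4}F(T)-\frac14\int_0^T J_{k-1}(x)\,dx .
\end{align}
The boundary term at $0$ vanishes because $x^4F(x)=J(x)\to 0$. At $T$ the boundary term is $\frac{T^4}{4}F(T)=\frac14 J(T)=O(T^{-1/2})$, which tends to $0$. Letting $T\to\infty$ and using the classical value $\int_0^\infty J_\nu(x)\,dx=1$ for $\nu>-1$ gives $\int_0^\infty J(x)/x\,dx=-\tfrac14$.

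\textbf{Expected obstacle.} The main point needing care is this last step. The integral $\int_0^\infty J_{k-1}$ converges only conditionally, so a naive Fubini swap is not justified. Truncating at $T$ and controlling the boundary term through the decay $J(T)=O(T^{-1/2})$ avoids the issue entirely.
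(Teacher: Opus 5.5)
Your proof is correct. It differs from the paper mainly in that the paper does not prove this lemma at all: it imports both the existence of $J$ and the value $-\frac14$ from the displays following \cite[Lemma 7.3]{Zubrilina}.

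Your construction makes the existence explicit. It uses the case $m=4$ of the recurrence, where the $J_3$ coefficient vanishes, and then inducts upward over the odd orders $5,7,\dots,k-1$. This shows that $J$ is a combination of $J_4,J_6,\dots,J_{k-2}$, which is exactly the structural input that Proposition \ref{prop:S-estimate} later uses via $J(t)\ll_k\min\{t^4,t^{-1/2}\}$. It also shows why the hypothesis $k\ge 6$ is needed.

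Your integration by parts amounts to rewriting \eqref{eq:J-definition} as $\frac{J(x)}{x}=\frac14\bigl(J'(x)-J_{k-1}(x)\bigr)$. This gives $\int_0^T\frac{J(x)}{x}\,dx=\frac14J(T)-\frac14\int_0^TJ_{k-1}(x)\,dx$, using $J(0)=0$. Truncating at $T$ is the right way to handle the merely conditional convergence of $\int_0^\infty J_{k-1}$.

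The other natural route would sum $\int_0^\infty J_m(x)x^{-1}\,dx=\frac1m$ over the terms of $J$. Your argument avoids this and never needs the explicit coefficients of $J$, so the value $-\frac14$ comes out uniformly in $k$. As sanity checks, $k=6$ gives $J=-J_4$ and $k=8$ gives $J=-\frac15J_4-\frac65J_6$, both with integral $-\frac14$.

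The only external input is the classical evaluation $\int_0^\infty J_\nu(x)\,dx=1$ for $\nu>-1$, which is standard (see \cite[Section 10.22]{NIST-DLMF}).
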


We are now ready to prove Proposition \ref{prop:S-estimate}. In the following, $J$ denotes the function from Lemma \ref{lem:properties-of-J}.
\begin{proposition} \label{prop:S-estimate}
    For $a\ge 1$, we have that
    \begin{align}
        S(a):=\sum_{d\ge 1}dQ(d)J\lrp{\frac{a}{d}} = -\frac{\Delta}{4} + O_k\lrp{a^{-\frac{1}{6}}} 
    \end{align}
\end{proposition}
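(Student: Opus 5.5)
We write $dQ(d) = b(d)/d$ and use Abel summation against $B(x)=\Delta x + O(\sqrt x)$ from Lemma \ref{lem:B-approx}. Setting $\phi(t) := \frac{1}{t} J\lrp{\frac{a}{t}}$, we have $S(a) = \sum_{d\ge1} b(d)\phi(d)$. The first step is to record the behaviour of $J$ needed for convergence and boundary terms. Since $J$ is a finite linear combination of Bessel functions of orders at least $4$, we have $J(x) \ll_k x^4$ as $x\to 0$ and $J(x)\ll_k x^{-1/2}$ as $x\to\infty$. The same bounds hold for $J'$, which is again a finite combination of Bessel functions. In particular $\phi(t) \ll a^4 t^{-5}$ for $t\ge a$, and $\phi(t)\ll t^{-1/2}a^{-1/2}$ for $t\le a$. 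Hence the series converges absolutely, and $B(t)\phi(t)\to 0$ as $t\to\infty$.

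Partial summation then gives
\begin{align}
S(a) = -\int_1^\infty B(t)\phi'(t)\,dt = -\Delta\int_1^\infty t\,\phi'(t)\,dt - \int_1^\infty (B(t)-\Delta t)\phi'(t)\,dt .
\end{align}
For the main term, we integrate by parts:
\begin{align}
-\int_1^\infty t\phi'(t)\,dt = \phi(1) + \int_1^\infty \phi(t)\,dt = J(a) + \int_1^\infty \frac{1}{t}J\lrp{\frac{a}{t}}dt = J(a)+\int_0^a \frac{J(x)}{x}\,dx .
\end{align}
By \eqref{eq:J-integral}, this equals $-\frac14 - \int_a^\infty \frac{J(x)}{x}dx + O(a^{-1/2})$. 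The tail integral is $O(a^{-1/2})$: one can see this trivially from $J(x)\ll x^{-1/2}$ (which gives $\int_a^\infty x^{-3/2}dx \ll a^{-1/2}$), or from oscillation. So the main term contributes $-\frac{\Delta}{4} + O_k(a^{-1/2})$.

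The error term $\int_1^\infty O(\sqrt t)\,|\phi'(t)|\,dt$ is the main obstacle. We compute
\begin{align}
\phi'(t) = -\frac{1}{t^2}J\lrp{\frac at} - \frac{a}{t^3}J'\lrp{\frac at}.
\end{align}
We split at $t=a$ and substitute $x=a/t$.
\begin{itemize}[leftmargin=*]
\item For $t\ge a$: $|\phi'(t)|\ll a^4t^{-5}$, so the contribution is $\int_a^\infty \sqrt t\, a^4 t^{-5}dt \ll a^{-1/2}$.
\item For $t\le a$: the crude bound $|\phi'(t)| \ll a^{1/2}t^{-5/2}$ gives $\int_1^a \sqrt t\cdot a^{1/2}t^{-5/2}dt \ll a^{1/2}$, which is too large.
\end{itemize}
So on $[1,a]$ the pointwise error $O(\sqrt t)$ is insufficient, and we have to exploit the oscillation of $J$ at large argument.

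The plan for $[1,a]$ is as follows. First pick a cutoff $T = a^{\theta}$. On $t\in[1,T]$, the function $J(a/t)$ oscillates with frequency about $a/t^2$ in $t$. There we bound the discrete sum $\sum_{d\le T} b(d)\phi(d)$ directly by absolute values:
\begin{align}
\sum_{d\le T} \frac{b(d)}{d}\sqrt{\frac{d}{a}} \ll \sqrt{\frac{T}{a}}.
\end{align}
Since the corresponding integral $\int_1^T \Delta\phi$ is $\int_{a/T}^{a}\frac{J(x)}{x}dx \ll (T/a)^{1/2}$, we may drop this range from both the sum and the main term. On $[T,a]$ we apply partial summation as above; the error becomes
\begin{align}
\int_T^a \sqrt t \cdot a^{1/2}t^{-5/2}\,dt \ll a^{1/2}T^{-1},
\end{align}
together with a boundary term $\sqrt T\,\phi(T) \ll a^{-1/2}$. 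Balancing $\sqrt{T/a}$ against $a^{1/2}T^{-1}$ gives $T=a^{2/3}$ and total error $a^{-1/6}$. This matches the claimed exponent, so I expect this splitting to be the intended route. The remaining steps are to check the small-$x$ and large-$x$ bounds for $J$ and $J'$ carefully, with constants depending only on $k$.
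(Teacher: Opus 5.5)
Your proposal is correct and is essentially the paper's proof: with $\phi(t)=\frac1t J(a/t)=\frac1a F(t/a)$, the paper also bounds the terms $d\le a^{2/3}$ trivially via $J(x)\ll x^{-1/2}$, applies partial summation with $B(x)=\Delta x+O(\sqrt x)$ to $d>a^{2/3}$, and extracts $-\Delta/4$ from $\int_0^\infty J(x)x^{-1}\,dx=-\frac14$ (no oscillation of $J$ is actually used, despite your remark). One slip to fix: on $t\ge a$ your stated bound $|\phi'(t)|\ll a^4t^{-5}$ gives $\int_a^\infty \sqrt t\, a^4t^{-5}\,dt\asymp a^{1/2}$, not $a^{-1/2}$, so you need either the correct bound $a^4t^{-6}$ there (note $J'(x)\ll x^3$, not $x^4$, near $0$) or simply the uniform bound $|\phi'(t)|\ll a^{1/2}t^{-5/2}$ for all $t>0$, which is the paper's $F'(x)\ll x^{-5/2}$ and makes your split at $t=a$ unnecessary.
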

\begin{proof}
First, define
\[
    F(x):=\frac{1}{x}J\lrp{\frac{1}{x}}.
\]
Since $J$ is a finite linear combination of Bessel functions of indices at least $4$,
we have the standard Bessel function estimates from \cite[Sections 10.14 and 10.17]{NIST-DLMF}:
\begin{align}
    J(t)
    &\ll_k
    \min\lrcb{t^4, t^{\frac{-1}{2}}}
    &
    \text{uniformly over $t \in (0,\infty)$},
    \\
    % \begin{cases}
    %     t^4, & 0<t\le1,\\
    %     t^{\frac{-1}{2}}, & t\ge1,
    % \end{cases} \\
    J'(t)
    &\ll_k
    % \begin{cases}
    %     t^3, & 0<t\le1,\\
    %     t^{\frac{-1}{2}}, & t\ge1.
    % \end{cases}
    t^{\frac{-1}{2}}
    &
    \text{uniformly over $t \in (0,\infty)$}.
\end{align}
This then immediately implies that
\begin{align} 
    F(x)
    &\ll_k
    \min\lrcb{x^{-5}, x^{\frac{-1}{2}}}
    &
    \text{uniformly over $x \in (0,\infty)$},
    % \begin{cases}
    %     x^{\frac{-1}{2}}, & 0<x\le1,\\
    %     x^{-5},   & x\ge1,
    % \end{cases}
    \label{eq:F-bounds}
    \\
    F'(x)
    &=
    -\frac{J\lrp{\frac{1}{x}}}{x^2}
    -
    \frac{J'\lrp{\frac{1}{x}}}{x^3}
    \ll_k
    x^{\frac{-5}{2}}
    &
    \text{uniformly over $x \in (0,\infty)$}.
    % \min\lrcb{x^{-6}, x^{\frac{-5}{2}}}
    % \begin{cases}
    %     x^{\frac{-5}{2}}, & 0<x\le1,\\
    %     x^{-6},   & x\ge1.
    % \end{cases}
    \label{eq:F'-bounds}
\end{align}
Also, note that
\begin{align}
    \label{eqn:b(d)<<1}
    b(d)
    =
    \mu^2(d)
    \prod_{p\mid d}c_p
    \le
    \prod_p \lrb{1 + O\big(p^{-2}\big)}
    \ll1.
\end{align}

Now, we prove the desired result. Since $b(d)=d^2Q(d)$, we have
\begin{align} \label{eq:S-b-expression}
S(a)
&=
\sum_{d\ge1} \frac{b(d)}{d} J\lrp{\frac{a}{d}}
\\
&=
\frac1a\sum_{d\ge 1}b(d)F\lrp{\frac{d}{a}} \\
&= 
\frac1a\sum_{d\le a^{\frac{2}{3}}}b(d)F\lrp{\frac{d}{a}}  + \frac1a\sum_{d> a^{\frac{2}{3}}}b(d)F\lrp{\frac{d}{a}}.
\end{align}
We deal with these two summations separately.

First, we have that
\begin{align} \label{eq:small-d}
\lrabs{\frac1a\sum_{d\le a^{\frac{2}{3}}}b(d) F\Big(\frac{d}{a}\Big)}
&\ll_k
\frac1{a}\sum_{d\le a^{\frac{2}{3}}}\lrp{\frac{d}{a}}^{\frac{-1}{2}} 
\qquad \text{(by \eqref{eqn:b(d)<<1} and \eqref{eq:F-bounds})}
\\
&\ll_k
\frac{a^{\frac{1}{2}}}{a} \cdot 
\lrp{a^{\frac{2}{3}}}^{\frac{1}{2}} 
\\
&=
a^{\frac{-1}{6}},
\end{align}
which is bounded by the claimed error term.

Second, we have by partial summation from $a^\frac{2}{3}$ to $\infty$ that
\begin{align}
&\frac1a\sum_{d>a^{\frac{2}{3}}}b(d)F\lrp{\frac{d}{a}} \\
&=
\frac{1}{a}
\lrb{
    \lim_{x\to\infty} B(x) F\lrp{\frac{x}{a}}
    -B(a^{\frac{2}{3}}) F\bigg(\frac{a^\frac{2}{3}}{a}\bigg)
    -\frac1{a}\int_{a^{\frac{2}{3}}}^\infty B(t)F'\lrp{\frac{t}{a}}\,dt
}
\\
&=
\frac{1}{a}
\lrb{
    -\lrb{\Delta a^{\frac{2}{3}} + E(a^{\frac{2}{3}}) } F\big(a^{\frac{-1}{3}}\big)
    -\frac1{a}\int_{a^{\frac{2}{3}}}^\infty \big[\Delta t + E(t)\big] F'\lrp{\frac{t}{a}}\,dt
}
\\
& \qquad \text{(where $E(t) := B(t) - \Delta t \ll \sqrt t$ by Lemma \ref{lem:B-approx})} \\
&= - \Delta a^{\frac{-1}{3}} F\big(a^{\frac{-1}{3}}\big) - \frac{\Delta}{a^2} \int_{a^{\frac{2}{3}}}^\infty t F'\lrp{\frac{t}{a}}\,dt + \lrb{
    -\frac{E(a^{\frac{2}{3}})}{a} F\big(a^{\frac{-1}{3}}\big)
    - \frac{1}{a^2} \int_{a^{\frac{2}{3}}}^\infty E(t) F'\lrp{\frac{t}{a}}\,dt
} 
\\
&=:
- \Delta a^{\frac{-1}{3}} F\big(a^{\frac{-1}{3}}\big)
-\Delta\int_{a^{\frac{-1}{3}}}^\infty x F'(x)\,dx
+ \hat E(a) 
\\
&=
\Delta\int_{a^{\frac{-1}{3}}}^\infty F(x)\,dx
+ \hat E(a)
\qquad\qquad\quad\ \ \,
\text{\big(by IBP, since $\lim_{x \to \infty} xF(x) = 0$ by \eqref{eq:F-bounds}\big)}
\\
&= \Delta\int_0^\infty F(x)\,dx
+O_k\big(a^{\frac{-1}{6}}\big) +\hat E(a)
\quad\,
\text{\big(since 
  $\textstyle
    \int_0^{a^{\frac{-1}{3}}} \!\lvert F(x)\rvert dx
    \ll
    \int_0^{a^{\frac{-1}{3}}} \!\!x^{\frac{-1}{2}}dx
    =
    2a^{\frac{-1}{6}}
  $\big)} 
\\
&=
\Delta\int_0^\infty \frac{J(u)}{u}\,du
+O_k\big(a^{\frac{-1}{6}}\big) + \hat E(a)
\quad
\text{\big(letting $u=\tfrac{1}{x}$\big)}
\\
&=
-\frac{\Delta}{4}
+O_k\big(a^{\frac{-1}{6}}\big) + \hat E(a),
\qquad\qquad\quad\ 
\text{(by \eqref{eq:J-integral})}
\end{align}
which gives the main term of the desired result.

Additionally, observe that
\begin{align}
\lrabs{\hat E(a)}
&= 
\lrabs{
    -\frac{E(a^{\frac{2}{3}})}{a} F\big(a^{\frac{-1}{3}}\big)
    - \frac{1}{a^2} \int_{a^{\frac{2}{3}}}^\infty E(t) F'\lrp{\frac{t}{a}}\,dt
} \\
&\ll_k
\frac{|E(a^{\frac{2}{3}})|}{a} |F\big(a^{\frac{-1}{3}}\big)|
+
\frac{1}{a} 
\int_{a^{\frac{-1}{3}}}^\infty
\lrabs{
    E\lrp{ax}
}
|F'(x)|\,dx
\\
&\ll_k
\frac{\sqrt{a^\frac{2}{3}}}{a} \lrp{a^\frac{-1}{3}}^\frac{-1}{2}
+
\frac{1}{a}
\int_{a^{\frac{-1}{3}}}^\infty
\sqrt{ax}
\cdot
x^\frac{-5}{2}\,dx
\qquad
\text{(by \eqref{eq:F-bounds} and \eqref{eq:F'-bounds})}
\\
&=
a^{\frac{-1}{2}}
+
a^{\frac{-1}{2}}
\left(a^\frac{-1}{3}\right)^{-1}
\\
&\ll_k
a^{\frac{-1}{6}},
\end{align}
which is bounded by the claimed error term. This
completes the proof.
\end{proof}

\section{Proof of Theorem \ref{thm:second-main-result}}
\label{sec:proof-of-second-main-result}

For fixed even integers $k\ge 2$, we begin by defining the weight $k$ murmuration density as
\begin{align} \label{eq:Mk-bessel}
    \M_k(y)
    :=
    \alpha\sqrt{y}
    \sum_{s\ge1}\sum_{d\ge1}
    \frac{Q(d)}{s}
        J_{k-1}\!\left(\frac{4\pi s\sqrt{y}}{d}\right)
    \qquad \text{for $y>0$}.
\end{align}
We note that \cite{Zubrilina} actually defined $\M_k(y)$ in a slightly different way, then proves that it is equal to this double summation in \cite[Theorem 3]{Zubrilina}. However, just defining $\M_k(y)$ as this summation will suffice for our purposes.

Moreover, we note that the standard Bessel function bound $J_{k-1}(x) \ll_k x^{\frac{-1}{2}}$ and the bound $Q(d) \ll \frac{1}{d^2}$ immediately imply the following facts about $\M_k(y)$.
\begin{lemma} \label{lem:Mk-convergence}
    The double summation
    \eqref{eq:Mk-bessel} converges absolutely and locally uniformly for
    $y\in(0,\infty)$. Moreover,
    \begin{align} \label{eq:Mk-at-zero}
        \M_k(y)\ll_k y^{\frac{1}{4}}
        \qquad \text{uniformly over $y \in (0,\infty)$},
    \end{align}
    so $\M_k$ extends continuously to $y=0$ by setting $\M_k(0)=0$.
\end{lemma}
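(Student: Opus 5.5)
We bound the double series termwise. For $y>0$ we write the general term as $\frac{Q(d)}{s} J_{k-1}\!\left(\frac{4\pi s\sqrt y}{d}\right)$. We use two standard estimates, $|J_{k-1}(x)|\ll_k x^{-1/2}$ and $|J_{k-1}(x)|\ll_k x^{k-1}$, both uniform for $x>0$. Together they give $|J_{k-1}(x)|\ll_k \min\{x^{k-1},x^{-1/2}\}$, and since $k\ge 2$ this is $\ll_k x^{\theta}$ for every $\theta\in[-\tfrac12,1]$. We also use $Q(d)\ll d^{-2}$, which follows from $Q(d)=b(d)/d^2$ and \eqref{eqn:b(d)<<1}.

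The cruder bound $x^{-1/2}$ alone gives
\begin{align}
\sum_{s,d}\frac{|Q(d)|}{s}\Big(\frac{s\sqrt y}{d}\Big)^{-1/2}\ll y^{-1/4}\sum_s s^{-3/2}\sum_d d^{-3/2}<\infty .
\end{align}
This yields absolute convergence for each $y>0$, and local uniform convergence by the Weierstrass M-test on compact subsets $[y_0,y_1]\subset(0,\infty)$, since the majorant $y_0^{-1/4}s^{-3/2}d^{-3/2}$ is summable. It also yields $\M_k(y)\ll_k y^{1/4}$, after multiplying by the prefactor $\alpha\sqrt y$.

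The bound $\M_k(y)\ll y^{1/4}$ is what makes $\M_k(y)\to 0$ as $y\to 0^+$, so that setting $\M_k(0)=0$ gives a continuous extension. Continuity on $(0,\infty)$ itself comes from local uniform convergence of a series of continuous functions.

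The statement's claim that the bound holds \emph{uniformly} over $y\in(0,\infty)$ is the subtle point, since $y^{1/4}$ is unbounded. I read it as an inequality $|\M_k(y)|\le C_k y^{1/4}$ with a single constant valid for all $y$, and the argument above gives exactly this, since every step is uniform in $y$. No large-$y$ decay is required for this lemma. If one wanted a sharper small-$y$ estimate, the $s$-sum could be split at $s\approx d/\sqrt y$ using $J_{k-1}(x)\ll x$ for small $x$, but this is unnecessary here.
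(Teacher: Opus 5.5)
Your proposal is correct and follows essentially the same route as the paper: the uniform bound $J_{k-1}(x)\ll_k x^{-1/2}$ together with $Q(d)\ll d^{-2}$ gives the summable majorant $y^{1/4}s^{-3/2}d^{-3/2}$. From this, absolute and locally uniform convergence and $\M_k(y)\ll_k y^{1/4}$ follow at once; the extra bound $J_{k-1}(x)\ll_k x^{k-1}$ and the remark about splitting the $s$-sum are unnecessary but harmless.
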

\begin{proof}
Applying the bounds
$J_{k-1}(x)\ll_k x^{\frac{-1}{2}}$ and $Q(d)=\frac{b(d)}{d^2}\ll d^{-2}$, we obtain that
\begin{align}
\sum_{s\ge1}
\sum_{d\ge 1}
\left|
\alpha\sqrt y\,\frac{Q(d)}s
J_{k-1}\left(\frac{4\pi s\sqrt y}{d}\right)
\right|
&\ll_k y^{\frac{1}{4}}
\left(\sum_{d\ge1}Q(d)\sqrt d\right)
\left(\sum_{s\ge1}s^{\frac{-3}{2}}\right) \\
&\ll_k y^{\frac{1}{4}} \qquad \text{uniformly over $y \in (0,\infty)$.}
\end{align}
This proves the desired result.
\end{proof}

We are finally ready to prove Theorem \ref{thm:second-main-result}. 
{
\renewcommand{\thetheorem}{\ref{thm:second-main-result}}
\addtocounter{theorem}{-1}
\begin{theorem} 
    For $c>1$ and $k\ge6$ even, define the smoothing of $\M_k(y)$ as 
    \begin{align}
        \M_k^{c}(y)
        :=
        \frac{\displaystyle\int_1^c \M_k\lrp{\frac{y}{u}}u\, du}
        {\displaystyle\int_1^c u\, d u}.
    \end{align}
    Then $\M_k^{c}$ is continuous on $[0,\infty)$ with $\M_k^{c}(0)=0$ and as $y\to\infty$,
    \begin{align}
        \M_k^{c}(y)=\frac{1}{2}+O_{k,c}\lrp{y^{\frac{-1}{12}}}.
    \end{align}
\end{theorem}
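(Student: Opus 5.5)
The plan is to write the smoothing $\M_k^{c}(y)$ as a weighted sum of the function $S(a)$ from Proposition \ref{prop:S-estimate}, and then apply that proposition term by term.

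\textbf{Continuity.} By Lemma \ref{lem:Mk-convergence}, $\M_k$ is a locally uniform limit of continuous functions on $(0,\infty)$. It satisfies $\M_k(y)\ll_k y^{1/4}$ uniformly and extends continuously to $0$ with $\M_k(0)=0$. Hence the integrand $u\mapsto \M_k(y/u)u$ is continuous on $[0,\infty)\times[1,c]$, jointly in $(y,u)$, and is bounded by $\ll_k c\,y^{1/4}$. Continuity of $\M_k^c$ on $[0,\infty)$ and $\M_k^c(0)=0$ then follow directly.

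\textbf{Asymptotics: reduction to $S$.} Insert the double series \eqref{eq:Mk-bessel} into the integral over $u\in[1,c]$. The absolute convergence in Lemma \ref{lem:Mk-convergence}, uniform over the compact range $y/u\in[y/c,y]$, justifies swapping sum and integral. For fixed $s,d$, substitute $x=\frac{4\pi s\sqrt{y/u}}{d}$. Then $\sqrt{y/u}=\frac{dx}{4\pi s}$, and
\[
u\,du=-2\Big(\frac{16\pi^2 s^2 y}{d^2}\Big)^2x^{-5}\,dx .
\]
A short computation turns the $(s,d)$ term into
\[
128\pi^3\alpha\,y^2\,\frac{s^2Q(d)}{d^3}\int_{x_1/\sqrt c}^{x_1}\frac{J_{k-1}(x)}{x^4}\,dx,
\qquad x_1=\frac{4\pi s\sqrt y}{d}.
\]
By \eqref{eq:J-definition}, this integral equals $J(x_1)x_1^{-4}-J(x_1/\sqrt c)(x_1/\sqrt c)^{-4}$. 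The factors of $y^2$ and $s^4$ cancel, and the term becomes
\[
\frac{\alpha}{2\pi}\frac{dQ(d)}{s^2}\Big[J\Big(\frac{a_s}{d}\Big)-c^2J\Big(\frac{a_s}{\sqrt c\,d}\Big)\Big],
\qquad a_s=4\pi s\sqrt y .
\]
Summing over $d$, and splitting into two separately convergent sums (each converges absolutely since $J(t)\ll_k\min\{t^4,t^{-1/2}\}$ and $Q(d)\ll d^{-2}$), gives
\[
\M_k^c(y)=\frac{2}{c^2-1}\cdot\frac{\alpha}{2\pi}\sum_{s\ge1}\frac{1}{s^2}\Big[S(a_s)-c^2S\big(a_s/\sqrt c\big)\Big].
\]

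\textbf{Applying Proposition \ref{prop:S-estimate}.} For $y\ge c/(16\pi^2)$, every argument $a_s$ and $a_s/\sqrt c$ is at least $1$, so Proposition \ref{prop:S-estimate} applies to each. The main terms contribute
\[
\frac{2}{c^2-1}\cdot\frac{\alpha}{2\pi}\zeta(2)\Big(-\frac{\Delta}{4}\Big)(1-c^2)=\frac{\alpha\zeta(2)\Delta}{4\pi}=\frac12,
\]
using $\Delta=\frac{2\pi}{\alpha\zeta(2)}$ from Lemma \ref{lem:B-approx}. The error terms contribute
\[
\ll_{k,c}\sum_{s\ge1}s^{-2}(s\sqrt y)^{-1/6}\ll y^{-1/12},
\]
which is the claimed bound.

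\textbf{Main obstacle.} The step needing the most care is justifying the interchanges. First, sum and integral must be swapped. Second, the $d$-sum must be split into the two separate $S$-values, which requires that $S(a)=\sum_d dQ(d)J(a/d)$ converge absolutely. This holds because $dQ(d)\,|J(a/d)|\ll_k d^{-1}(a/d)^4$ for $d\ge a$ and $\ll_k d^{-1}$ for $d\le a$. Beyond that, the computation is routine.
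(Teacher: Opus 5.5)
Your proposal is correct and follows the paper's proof essentially step for step. You insert the Bessel series, swap sum and integral, use \eqref{eq:J-definition} to rewrite the smoothed integral as $\frac{\alpha}{2\pi}\sum_{s\ge1}s^{-2}\big[S(4\pi s\sqrt y)-c^2S(4\pi s\sqrt y/\sqrt c)\big]$, and then apply Proposition \ref{prop:S-estimate} with $\Delta=\frac{2\pi}{\alpha\zeta(2)}$; your single substitution $x=\frac{4\pi s\sqrt{y/u}}{d}$ merges the paper's two substitutions, and your checks (Fubini via Lemma \ref{lem:Mk-convergence}, absolute convergence of each $S$-value, and $y\ge c/(16\pi^2)$ so every argument is at least $1$) make explicit details the paper leaves implicit.
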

}
\begin{proof}
Continuity follows immediately from Lemma \ref{lem:Mk-convergence}. So it remains to determine the behavior of $\M^{c}_k(y)$ as $y\to\infty$.
We have
\begin{align}
&\int_1^c
\M_k\lrp{\frac{y}{u}}u\,du \\
&=
\int_1^c
\alpha
\sqrt{\frac{y}{u}}
\sum_{s\ge1} \sum_{d\ge1}
\frac{Q(d)}{s}
J_{k-1}\lrp{
\frac{4\pi s}{d}
\sqrt{\frac{y}{u}}
}
u\,du
\qquad \text{(by \eqref{eq:Mk-bessel})}
\\
&=
\alpha \sqrt{y}
\sum_{s\ge1} \sum_{d\ge1}
\frac{Q(d)}{s}
\int_1^c
\sqrt{u}\,
J_{k-1}\lrp{\frac{4\pi s\sqrt{y}}{d\sqrt{u}}}\,du
\\
&=
2\alpha \sqrt y
\sum_{s\ge1} \sum_{d\ge1}
\frac{Q(d)}{s}
\int_{\frac{1}{\sqrt c}}^{1}
\frac{1}{w^4}
J_{k-1}\lrp{\frac{4\pi s \sqrt{y}}{d} w}\,dw
\qquad\quad
\text{\big(letting $w=\tfrac{1}{\sqrt u}$\big)}
\\
&=
2 \alpha \sqrt y
\sum_{s\ge1} \sum_{d\ge1}
\frac{Q(d)}{s} \lrp{\frac{4\pi s \sqrt{y}}{d}}^3
\int_{\frac{4\pi s\sqrt y}{d\sqrt c}}^{\frac{4\pi s\sqrt y}{d}}
\frac{J_{k-1}(t)}{t^4}\,dt
\qquad\ \,
\text{\big(letting $t=\tfrac{4\pi s\sqrt y}{d} w$\big)}
\\
&=
\frac{\alpha}{2\pi}
\sum_{s\ge1} \sum_{d\ge1}
\frac{d Q(d)}{s^2} \lrp{\frac{4\pi s \sqrt{y}}{d}}^4
\lrb{
\frac{
J\lrp{\frac{4\pi s\sqrt y}{d}}
}{
\lrp{\frac{4\pi s\sqrt y}{d}}^4
}
-
\frac{
J\lrp{\frac{4\pi s\sqrt y}{d\sqrt c}}
}{
\lrp{\frac{4\pi s\sqrt y}{d\sqrt c}}^4
}
}
\qquad\quad\!\!
\text{(by \eqref{eq:J-definition})}
\\
&=
\frac{\alpha}{2\pi}
\sum_{s\ge1} \sum_{d\ge1}
\frac{dQ(d)}{s^2}
\lrb{
J\lrp{\frac{4\pi s\sqrt y}{d}}
-
c^2
J\lrp{\frac{4\pi s\sqrt y}{d\sqrt c}}
}
\\
&=
\frac{\alpha}{2\pi}
\sum_{s\ge1}\frac{1}{s^2}
\lrb{
    S\lrp{4\pi s\sqrt y}
    -
    c^2S\lrp{\frac{4\pi s \sqrt y}{\sqrt c}}
} \\
&=
\frac{\alpha}{2\pi}
\sum_{s\ge1}\frac{1}{s^2}
\lrb{
    \frac{-\Delta}{4} 
    - c^2 \frac{-\Delta}{4}
    +O_{k,c}\lrp{\lrp{s \sqrt y}^{\frac{-1}{6}}}
} 
\qquad \text{(by Proposition \ref{prop:S-estimate})}
\\
&=
\frac{\alpha}{2\pi} \cdot \zeta(2) \cdot \frac{(c^2-1)\Delta}{4} + O_{k,c}\lrp{y^\frac{-1}{12}} \\
&= 
\frac{c^2-1}{4}
+O_{k,c}\lrp{y^{\frac{-1}{12}}}. \qquad \text{(since $\textstyle \Delta = \frac{2\pi}{\alpha \zeta(2)}$)}
\end{align}

This then yields
\begin{align}
    \M_k^{\,c}(y)
    =
    \frac{
        \displaystyle
        \int_1^c
        \M_k\lrp{\frac{y}{u}}u\,du
    }{
        \displaystyle
        \int_1^c u\,du
    }
    =
    \frac{
        \displaystyle
        \frac{c^2-1}{4}
        +O_{k,c}\lrp{y^{\frac{-1}{12}}}
    }{
        \displaystyle
        \frac{c^2-1}{2}
    }
    =
    \frac12
    +
    O_{k,c}\lrp{y^{\frac{-1}{12}}},
\end{align}
as desired.
\end{proof}

% In closing, we briefly note two corrections to the statements of \cite[Theorem 2]{Zubrilina} and \cite[Theorem 3]{Zubrilina}. 
% In the statement of \cite[Theorem 2]{Zubrilina}, it seems that the bound $P \ll X^{\frac{6}{5}}$ in the statement of \cite[Theorem 2]{Zubrilina} was intended to be $P \ll X^{\frac{12}{11}-\varepsilon}$.
% Additionally, in the statement of \cite[Theorem 3]{Zubrilina}, it seems that the leading constant should be $(-1)^{\frac{k}{2}-1} \frac{1}{\pi \sqrt{2}} \alpha$ instead of $(-1)^{\frac{k}{2}-1} \sqrt{\frac{2}{\pi}} \alpha$. 

\section*{Acknowledgments}
The authors would like to thank Nina Zubrilina for her work on murmurations, which inspired this study. This research was supported by NSF grant DMS-2349174. Hui Xue is supported by Simons Foundation grant MPS-TSM-00007911.

\bibliographystyle{plain}
\bibliography{bibliography.bib}

\end{document}